\newcommand{\mycomment}[1]{}
\title{Symmetric operads from a planar point of view}
\author{Brice Le Grignou \quad Victor Roca i Lucio}
\address{Brice Le Grignou}
\email{\href{mailto:bricelegrignou@gmail.com}{bricelegrignou@gmail.com}}
\address{Victor Roca i Lucio, Ecole Polytechnique Fédérale de Lausanne, EPFL,
CH-1015 Lausanne, Switzerland}
\email{\href{mailto:victor.rocalucio@epfl.ch}{victor.rocalucio@epfl.ch}}
\date{\today}
\begin{document}

%new theorem
%------------------------------------------------------------------------------------------------------------------------
\theoremstyle{plain}
\newtheorem{theorem}{Theorem}%[section]
\newtheorem*{theorem*}{Theorem}
\newtheorem{lemma}{Lemma}%[section]
\newtheorem{proposition}{Proposition}%[section]
\newtheorem{assumption}{Assumption}%[section]
\newtheorem{corollary}{Corollary}%[section]

\theoremstyle{definition}
\newtheorem{definition}{Definition}%[section]
\newtheorem{hypothesis}{Hypothesis}%[section]

\theoremstyle{remark}
\newtheorem{remark}{\sc Remark}%[section]
\newtheorem{example}{\sc Example}%[section]
\newtheorem*{notation}{\sc Notation}

%new command
%------------------------------------------------------------------------------------------

\newcommand{\qi}{\xrightarrow{ \,\smash{\raisebox{-0.65ex}{\ensuremath{\scriptstyle\sim}}}\,}}
\newcommand{\lqi}{\xleftarrow{ \,\smash{\raisebox{-0.65ex}{\ensuremath{\scriptstyle\sim}}}\,}}

\newcommand{\draftnote}[1]{\marginpar{\raggedright\textsf{\hspace{0pt} \tiny #1}}}
\newcommand{\ac}{{\scriptstyle \text{\rm !`}}}

\newcommand{\Ch}{\categ{Ch}}
\newcommand{\Catesmall}{\categ{Cat}_{\categ E, \mathrm{small}}}
\newcommand{\Operade}{\Operad_{\categ E}}
\newcommand{\Operadecprime}{\Operad_{\categ E}}
\newcommand{\Operadesmall}{\Operad_{\categ E, \mathrm{small}}}
\newcommand{\eII}{\mathcal{I}}
\newcommand{\ecateg}[1]{\mathcal{#1}}
\newcommand{\cmonlax}{\categ{CMon}_\lax}
\newcommand{\cmonoplax}{\categ{CMon}_\oplax}
\newcommand{\cmonstrong}{\categ{CMon}_\strong}
\newcommand{\cmonstrict}{\categ{CMon}_\strict}
\newcommand{\cat}{\mathrm{cat}}
\newcommand{\lax}{\mathrm{lax}}
\newcommand{\oplax}{\mathrm{oplax}}
\newcommand{\strong}{\mathrm{strong}}
\newcommand{\strict}{\mathrm{strict}}
\newcommand{\pl}{\mathrm{pl}}
\newcommand{\gr}{\mathrm{gr}}

\newcommand{\Cats}{\mathsf{Cats}}
\newcommand{\Functors}{\mathsf{Functors}}

\newcommand{\Ob}{\mathrm{Ob}}
\newcommand{\tr}{\mathrm{tr}}
\newcommand{\catch}{\mathsf{Ch}}
\newcommand{\categ}[1]{\mathsf{#1}}
\newcommand{\set}[1]{\mathrm{#1}}
\newcommand{\catoperad}[1]{\mathsf{#1}}
\newcommand{\operad}[1]{\mathcal{#1}}
\newcommand{\algebra}[1]{\mathrm{#1}}
\newcommand{\coalgebra}[1]{\mathrm{#1}}
\newcommand{\cooperad}[1]{\mathcal{#1}}
\newcommand{\ocooperad}[1]{\overline{\mathcal{#1}}}
\newcommand{\catofmod}[1]{{#1}\mathrm{-}\mathsf{mod}}
\newcommand{\catofcog}[1]{#1\mathrm{-}\mathsf{cog}}
\newcommand{\catcog}[1]{\Cog\left(#1\right)}

\newcommand{\catdgmod}[1]{\categ{dg}~#1\text{-}\categ{mod}}
\newcommand{\catpdgmod}[1]{\categ{pdg}~#1\text{-}\categ{mod}}
\newcommand{\catgrmod}[1]{\categ{gr}~#1\text{-}\categ{mod}}

\newcommand{\catdgalg}[1]{\categ{dg}~#1\text{-}\categ{alg}}
\newcommand{\catpdgalg}[1]{\categ{pdg}~#1\text{-}\categ{alg}}
\newcommand{\catgralg}[1]{\categ{gr}~#1\text{-}\categ{alg}}
\newcommand{\catcurvalg}[1]{\categ{curv}~#1\text{-}\categ{alg}}

\newcommand{\catdgcompalg}[1]{\categ{dg}~#1\text{-}\categ{alg}^{\mathsf{comp}}}
\newcommand{\catpdgcompalg}[1]{\categ{pdg}~#1\text{-}\categ{alg}^{\mathsf{comp}}}
\newcommand{\catgrcompalg}[1]{\categ{gr}~#1\text{-}\categ{alg}^{\mathsf{comp}}}
\newcommand{\catcurvcompalg}[1]{\categ{curv}~#1\text{-}\categ{alg}^{\mathsf{comp}}}

\newcommand{\catdgcog}[1]{\categ{dg}~#1\text{-}\categ{cog}}
\newcommand{\catpdgcog}[1]{\categ{pdg}~#1\text{-}\categ{cog}}
\newcommand{\catgrcog}[1]{\categ{gr}~#1\text{-}\categ{cog}}
\newcommand{\catcurvcog}[1]{\categ{curv}~#1\text{-}\categ{cog}}

\newcommand{\dgoperads}{\categ{dg}~\categ{Operads}}
\newcommand{\pdgoperads}{\categ{pdg}~\categ{Operads}}
\newcommand{\groperads}{\categ{gr}~\categ{Operads}}
\newcommand{\curvcooperads}{\categ{curv}~\categ{Cooperads}}
\newcommand{\grcooperads}{\categ{gr}~\categ{Cooperads}}

\newcommand{\pdgcooperads}{\categ{pdg}~\categ{Cooperads}}
\newcommand{\dgcooperads}{\categ{dg}~\categ{Cooperads}}

\newcommand{\catalg}[1]{\Alg\left(#1\right)}

\newcommand{\catofcolcomod}[1]{\mathsf{Col}\mathrm{-}{#1}\mathrm{-}\mathsf{comod}}
\newcommand{\catofcoalgebra}[1]{{#1}\mathrm{-}\mathsf{cog}}
\newcommand{\catofalg}[1]{\operad{#1}\mathrm{-}\mathsf{alg}}
\newcommand{\catofalgebra}[1]{{#1}\mathrm{-}\mathsf{alg}}
\newcommand{\mbs}{\mathsf{S}}
\newcommand{\catocol}[1]{\mathsf{O}_{\set{#1}}}
\newcommand{\catoftrees}{\mathsf{Trees}}
\newcommand{\cattcol}[1]{\catoftrees_{\set{#1}}}
\newcommand{\catcorcol}[1]{\mathsf{Corol}_{\set{#1}}}
\newcommand{\Einfty}{\mathcal{E}_{\infty}}
\newcommand{\nuEinfty}{\mathcal{nuE}_{\infty}}
\newcommand{\Fun}[3]{\mathrm{Fun}^{#1}\left(#2,#3\right)}
\newcommand{\III}{\operad{I}}
\newcommand{\treeoperad}{\mathbb{T}}
\newcommand{\treemodule}{\mathbb{T}}
\newcommand{\core}{\mathrm{Core}}
\newcommand{\forget}{\mathrm{U}}
\newcommand{\treemonad}{\mathbb{O}}
\newcommand{\cogcomonad}[1]{\mathbb{L}^{#1}}
\newcommand{\cofreecog}[1]{\mathrm{L}^{#1}}

\newcommand{\barfunctor}[1]{\mathrm{B}_{#1}}
\newcommand{\baradjoint}[1]{\mathrm{B}^\dag_{#1}}
\newcommand{\cobarfunctor}[1]{\mathrm{C}_{#1}}
\newcommand{\cobaradjoint}[1]{\mathrm{C}^\dag_{#1}}
\newcommand{\Operad}{\mathsf{Operad}}
\newcommand{\coOperad}{\mathsf{coOperad}}

\newcommand{\Aut}[1]{\mathrm{Aut}(#1)}

\newcommand{\verte}[1]{\mathrm{vert}(#1)}
\newcommand{\edge}[1]{\mathrm{edge}(#1)}
\newcommand{\leaves}[1]{\mathrm{leaves}(#1)}
\newcommand{\inner}[1]{\mathrm{inner}(#1)}
\newcommand{\inp}[1]{\mathrm{input}(#1)}

\newcommand{\field}{\mathbb{K}}
\newcommand{\mbk}{\mathbb{K}}
\newcommand{\mbn}{\mathbb{N}}

\newcommand{\id}{\mathrm{Id}}
\newcommand{\ii}{\mathrm{id}}
\newcommand{\unit}{\mathds{1}}

\newcommand{\Lin}{Lin}

\newcommand{\BijC}{\mathsf{Bij}_{C}}

\newcommand{\kk}{\Bbbk}
\newcommand{\PP}{\mathcal{P}}
\newcommand{\C}{\mathcal{C}}
\newcommand{\Sy}{\mathbb{S}}
\newcommand{\Tree}{\mathsf{Tree}}
\newcommand{\treemod}{\mathbb{T}}
\newcommand{\Dend}{\Omega}
\newcommand{\aDend}{\Omega^{\mathsf{act}}}
\newcommand{\cDend}{\Omega^{\mathsf{core}}}
\newcommand{\cDendpart}{\cDend_{\mathsf{part}}}

\newcommand{\build}{\mathrm{Build}}
\newcommand{\col}{\mathrm{col}}

\newcommand{\HOM}{\mathrm{HOM}}
\newcommand{\Hom}[3]{\mathrm{hom}_{#1}\left(#2 , #3 \right)}
\newcommand{\ov}{\overline}
\newcommand{\otimeshadamard}{\otimes_{\mathbb{H}}}

\newcommand{\I}{\mathcal{I}}
\newcommand{\Aa}{\mathcal{A}}
\newcommand{\BB}{\mathcal{B}}
\newcommand{\CC}{\mathcal{C}}
\newcommand{\DD}{\mathcal{D}}
\newcommand{\EE}{\mathcal{E}}
\newcommand{\FF}{\mathcal{F}}
\newcommand{\II}{\mathbb{1}}
\newcommand{\RR}{\mathcal{R}}
\newcommand{\UU}{\mathcal{U}}
\newcommand{\VV}{\mathcal{V}}
\newcommand{\WW}{\mathcal{W}}
\newcommand{\AAA}{\mathscr{A}}
\newcommand{\BBB}{\mathscr{B}}
\newcommand{\CCC}{\mathscr{C}}
\newcommand{\DDD}{\mathscr{D}}
\newcommand{\EEE}{\mathscr{E}}
\newcommand{\FFF}{\mathscr{F}}

\newcommand{\PPP}{\mathscr{P}}
\newcommand{\QQQ}{\mathscr{Q}}

\newcommand{\QQ}{\mathcal{Q}}

\newcommand{\KKK}{\mathscr{K}}
\newcommand{\KK}{\mathcal{K}}

\newcommand{\ra}{\rightarrow}

\newcommand{\Ai}{\mathcal{A}_{\infty}}
\newcommand{\uAi}{u\mathcal{A}_{\infty}}
\newcommand{\uEinfty}{u\mathcal{E}_{\infty}}
\newcommand{\uAW}{u\mathcal{AW}}

\newcommand{\uAlg}{\mathsf{Alg}}
\newcommand{\nuAlg}{\mathsf{nuAlg}}
\newcommand{\cAlg}{\mathsf{cAlg}}

\newcommand{\ucAlg}{\mathsf{ucAlg}}
\newcommand{\Cog}{\mathsf{Cog}}
\newcommand{\nuCog}{\mathsf{nuCog}}
\newcommand{\uAWcog}{u\mathcal{AW}-\mathsf{cog}}

\newcommand{\uCog}{\mathsf{uCog}}
\newcommand{\cCog}{\mathsf{cCog}}
\newcommand{\ucCog}{\mathsf{ucCog}}
\newcommand{\cNilCog}{\mathsf{cNilCog}}
\newcommand{\ucNilCog}{\mathsf{ucNilCog}}
\newcommand{\NilCog}{\mathsf{NilCog}}

\newcommand{\Cocom}{\mathsf{Cocom}}
\newcommand{\uCocom}{\mathsf{uCocom}}
\newcommand{\NilCocom}{\mathsf{NilCocom}}
\newcommand{\uNilCocom}{\mathsf{uNilCocom}}
\newcommand{\Liealg}{\mathsf{Lie}-\mathsf{alg}}
\newcommand{\cLiealg}{\mathsf{cLie}-\mathsf{alg}}
\newcommand{\Alg}{\mathsf{Alg}}
\newcommand{\Linfty}{\mathcal{L}_{\infty}}
\newcommand{\CMC}{\mathfrak{CMC}}
\newcommand{\Tfree}{\mathbb{T}}

\newcommand{\Hinich}{\mathsf{Hinich} -\mathsf{cog}}

\newcommand{\Ccomod}{\mathscr C -\mathsf{comod}}
\newcommand{\Pmod}{\mathscr P -\mathsf{mod}}

\newcommand{\cCoop}{\mathsf{cCoop}}

\newcommand{\Set}{\mathsf{Set}}
\newcommand{\sSet}{\mathsf{sSet}}
\newcommand{\dgMod}{\mathsf{dgMod}}
\newcommand{\gMod}{\mathsf{gMod}}
\newcommand{\catOrd}{\mathsf{Ord}}
\newcommand{\catBij}{\mathsf{Bij}}
\newcommand{\catSmod}{\mbs\mathsf{mod}}
\newcommand{\EEtw}{\mathcal{E}\text{-}\mathsf{Tw}}
\newcommand{\OpBim}{\mathsf{Op}\text{-}\mathsf{Bim}}

\newcommand{\Palg}{\mathcal{P}-\mathsf{alg}}
\newcommand{\Qalg}{\mathcal{Q}-\mathsf{alg}}
\newcommand{\Pcog}{\mathcal{P}-\mathsf{cog}}
\newcommand{\Qcog}{\mathcal{Q}-\mathsf{cog}}
\newcommand{\Ccog}{\mathcal{C}-\mathsf{cog}}
\newcommand{\Dcog}{\mathcal{D}-\mathsf{cog}}
\newcommand{\uCoCog}{\mathsf{uCoCog}}

\newcommand{\Artinalg}{\mathsf{Artin}-\mathsf{alg}}

\newcommand{\colim}[1]{\underset{#1}{\mathrm{colim}}}
\newcommand{\Map}{\mathrm{Map}}
\newcommand{\Def}{\mathrm{Def}}
\newcommand{\Bij}{\mathrm{Bij}}
\newcommand{\op}{\mathrm{op}}

\newcommand{\undern}{\underline{n}}
\newcommand{\dginterval}{{N{[1]}}}
\newcommand{\dgsimplex}[1]{{N{[#1]}}}

\newcommand{\cofree}{ T^c}
\newcommand{\Tw}{ Tw}
\newcommand{\End}{\mathcal{E}\mathrm{nd}}
\newcommand{\catEnd}{\mathsf{End}}
\newcommand{\coEnd}{\mathrm{co}\End}
\newcommand{\Mult}{\mathrm{Mult}}
\newcommand{\coMult}{\mathrm{coMult}}

\newcommand{\Lie}{\mathcal{L}\mathr{ie}}
\newcommand{\As}{\mathcal{A}\mathrm{s}}
\newcommand{\uAs}{\mathrm{u}\As}
\newcommand{\coAs}{\mathrm{co}\As}
\newcommand{\Com}{\mathcal{C}\mathrm{om}}
\newcommand{\uCom}{\mathrm{u}\Com}
\newcommand{\Perm}{\catoperad{Perm}}
\newcommand{\uBE}{\mathrm{u}\mathcal{BE}}
\newcommand{\uBEs}{{\uBE}^{\mathrm s}}

\newcommand{\comp}{\circ}
\newcommand{\restrictionextension}{\mathrm{RE}}
\newcommand{\extension}{\mathrm{E}}
\newcommand{\extensionone}{\mathrm{E}_1}
\newcommand{\extensiontwo}{\mathrm{E}_2}
\newcommand{\restrictionone}{\mathrm{R}_1}
\newcommand{\restrictiontwo}{\mathrm{R}_2}

\newcommand{\itemt}{\item[$\triangleright$]}

\newcommand{\poubelle}[1]{}

\newcommand{\Victor}[1]{\textcolor{blue}{#1}}
\newcommand{\Brice}[1]{\textcolor{red}{#1}}

\maketitle

\begin{abstract}
The aim of this note is to give a detailed account of how symmetric operads can be constructed from planar (non-symmetric) operads, and to carefully spell out the algebraic interplay between these two notions. It is a companion note to the main paper \cite{premierpapier}. 
\end{abstract}

\setcounter{tocdepth}{1}
\tableofcontents
\setcounter{tocdepth}{1}
\tableofcontents

\section*{Introduction}
This is a companion note to the main paper \cite{premierpapier}. In \textit{op.cit.}, we generalize the homotopical methods of operadic calculus over a field of positive characteristic. The goal of this note is to provide the necessary algebraic background needed in order to understand how symmetric operads and cooperads relate to their planar counterparts. Let us explain why this becomes a key issue over a positive characteristic field.

\medskip

Symmetric (co)operads carry an action of the symmetric group $\mathbb{S}_n$ for all $n \geq 0$, whereas their planar (also called non-symmetric) counterpart do not. The representation theory of $\mathbb{S}_n$ is fairly well-behaved when $\kk$ is a characteristic zero field, their category almost behaves like the category of $\kk$-modules. This makes the theory of symmetric (co)operads behave in a very similar fashion to their planar analogues.

\medskip

Nevertheless, things change drastically over a positive characteristic field. Understanding the representation theory of the symmetric groups in this setting is an active subject of research, see for instance \cite{ICMSymmetric}. Hence, over a field of positive characteristic, the theory of symmetric (co)operads becomes much harder than the theory of planar (co)operads. However, almost all of the algebraic structures of interest can only be encoded with symmetric operads, since operations usually have symmetries. 

\medskip

Fortunately, these two notions are not unrelated: they are in fact connected to each other by a quite rich algebraic interplay. Our aim in this note is to layout the algebraic structures that relate these two notions. We begin, in the first section, by recalling the main constructions in the planar setting: the composition product, the tree monad and the reduced tree comonad. We refer to \cite{LodayVallette} for more details. Then we construct, in the second section, the composition product of $\mathbb{S}$-modules only using its planar analogue and the free-forgetful adjunction between $\mathbb{S}$-modules and $\mathbb{N}$-modules; and we show that our construction coincides with the classical composition product of $\mathbb{S}$-modules. Therefore our definitions for symmetric (co)operads coincide with the ones used in the literature. In the third section, we construct the tree endofunctor from the planar tree endofunctor and show it has a monad structure, deeply related to the planar monad structure. This is crucial in order to understand symmetric dg operads which are free as symmetric operads and whose generators are free $\mathbb{S}$-modules. 

\medskip

Finally, the most complicated part is dealing with the reduced tree endofunctor and its comonad structure in section four. We construct its comonad structure from its planar analogue and relate the two constructions. Since \textit{conilpotent} cooperads are precisely coalgebras over the reduced tree comonad, this allows us to understand symmetric conilpotent dg cooperads whose underlying conilpotent cooperad is cofree generated by a free $\mathbb{S}$-module in planar terms. Understanding these objects is crucial in \cite{premierpapier}, as the main definition of \textit{op.cit.}, the one that makes the theory work, is that of a \textit{quasi-planar conilpotent dg cooperad}. This definition can be understood as a particular example of a symmetric conilpotent dg cooperads whose underlying conilpotent graded cooperad is particularly well-behaved with respect to the symmetric group actions. And many examples of such cooperads are provided by symmetric conilpotent dg cooperads whose underlying conilpotent graded cooperad is cofree generated by a free $\mathbb{S}$-module, but where the differential (which might interact non-trivially with the symmetric groups) is still particularly well-behaved with respect to these actions. We will actually relate these quasi-planar cooperads to \textit{higher cooperads} described in \cite{BrunoMalte}. Let us end this introduction by point out that, although the case over the integers $\mathbb{Z}$ or the $p$-adic numbers $\mathbb{Z}_p$ is more difficult that the case over a field $\kk$, the point of view adopted in this note could be useful in order to further generalize the theory of algebraic operads and its main methods to these settings.

\subsection*{Conventions}
We work in the category symmetric monoidal category of differential graded modules (chain complexes) over a field $\kk$. All the results of this note are also true in the graded or in pre-differential graded setting. See \cite{premierpapier} for more details on the setting. We will use $X,Y$ for generic dg $\mathbb{N}$-modules and $M,N$ for generic dg $\mathbb{S}$-modules. 

\section{The planar case}
We give some recollections about the planar operads, which can also be found in \cite[Chapter 5, Section 9]{LodayVallette}. We then consider conilpotent planar cooperads, where we define them as coalgebras over the reduced planar tree comonad. 

\subsection{$\mathbb N$-modules, planar operads and planar cooperads}
We consider $\mathbb N$ as a category where objects are natural integers and where there are only identity morphisms. 

\begin{definition}[dg $\mathbb N$-modules]
    A dg $\mathbb N$-modules amounts to the data of a functor 
    \[
    X: \mathbb N \longrightarrow \catdgmod{\kk}.
    \]    
    The object $X(n)$ is called the arity $n$ part of $X$. We denote $\catdgmod{\mathbb N}$
    the category of dg $\mathbb N$-modules. 
\end{definition}

The planar horizontal product on dg $\mathbb N$-modules $X,Y$ is given by the Day convolution

\[
(X \circledast_\pl Y)(n) \coloneqq \bigoplus_{k+l = n} X(k) \otimes Y(l)~.
\]
\vspace{0.1pc}

This endows dg $\mathbb N$ with a symmetric monoidal category structure, where the unit is given by $\kk$ concentrated in arity $0$.

\medskip

There is another monoidal structure given by the planar composition product 

\[
(X \comp_\pl Y)(n) \coloneqq \bigoplus_{k \geq 0} X(k) \otimes Y^{\circledast_\pl k} (n)~.
\]
\vspace{0.1pc}

The unit for the composition is given by $\operad I$, defined as follows 
$$
\II (n) \coloneqq
\begin{cases}
    0 \text{ if }n \neq 1~,
    \\
    \kk \text{ if }n = 1.
\end{cases}
$$

\begin{definition}[Planar dg operad]
A \textit{planar dg operad} $\operad {P}$ amounts to the data of a monoid $(\mathcal{P},\gamma,\eta)$ in the category of dg $\mathbb N$-modules with respect to the composition product. 
\end{definition}

\begin{definition}[Augmented planar dg operad]
An augmented planar dg operad $\operad P$ amounts to the data of a planar dg operad $(\mathcal{P},\gamma,\eta)$ equipped with a morphism of planar dg operads $\nu: \operad P \longrightarrow \operad I$ such that $\nu \circ \eta = \mathrm{id}.$
\end{definition}

Given an augmented planar dg operad $\operad P$, we will denote by $\overline{\operad P}$ the kernel of the augmentation map. 

\begin{definition}[Planar dg cooperad]
A \textit{planar dg cooperad} $\operad C$ amounts to the data of a comonoid $(\C, \Delta, \epsilon)$ in the category of dg $\mathbb N$-modules with respect to the composition product. 
\end{definition}

Given a planar dg cooperad $\C$, we will denote by $\overline{\operad C}$ the kernel of the counit map. 

\begin{definition}[Coaugmented planar dg cooperad]
A coaugmented planar dg cooperad $\operad C$ amounts to the data of a planar dg cooperad $(\C, \Delta, \epsilon)$ equipped with a morphism of planar dg cooperads $\mu: \operad I \longrightarrow \operad C$ such that $\epsilon \circ \mu = \mathrm{id}$. 
\end{definition}

%--------------------------------------------------

\subsection{The planar tree module}
In the subsequent subsections, we will extensively use the notion of a tree, which we define in Appendix \ref{appendixtrees}. Let $t$ be a planar tree with $n$ leaves. It induces an endofunctor $t(-)$ in the category of dg $\mathbb N$-modules. For $m \geq 0$, it is given by 

\[
t(X)(m) =
\begin{cases}
 X(n_1) \otimes \cdots \otimes X(n_k) \quad \text{when m = n}~,
 \\
 0 \quad \text{otherwise}~,
\end{cases}
\]
\vspace{0.1pc}

where $t$ has $k$ nodes and its nodes have, in order, $n_1, \ldots, n_k$ outputs. 

\medskip

For every dg $\mathbb N$-module $X$, one can define the \textit{planar tree module} $\treemod_\pl(X)$ of $X$, which is the dg $\mathbb N$-module given, for $m \geq 0$, by
\[
\treemod_\pl(X)(m)  = \bigoplus_t t(X)~,
\]
where the sum is taken over the isomorphism classes of planar trees with $m$ leaves.

\medskip

Let $n$ be in $\mathbb N$. We also define the following subfunctors of the planar tree module:

\medskip

\begin{enumerate}
	\item The \textit{reduced planar tree endofunctor} $\overline{\treemod}_\pl(X)$, given by the sum over all non-trivial planar trees;
	
\medskip

    \item The $n$\textit{-levelled planar tree endofunctor} $\treemod_{\pl, \leq n}(X)$, given by the sum over planar trees whose height is equal or lower than $n$ (recall that the height of the trivial tree with no node is $0$);
    
\medskip

    \item The $n$-\textit{weight planar tree endofunctor} $\treemod_{\pl}^{(\leq n)}(X)$, given by the sum over planar trees with $n$ nodes or less.
 
\end{enumerate}

\medskip

All these constructions are natural in $X$ and define endofunctors of the category of dg $\mathbb N$-modules.

\begin{remark}
One can combine these notations. For instance the $n$-levelled reduced planar tree endofunctor $\overline{\treemod}_{\pl,\leq n}(X)$ is obtained by taking the sum over non trivial trees whose height is at most $n$. 
\end{remark}

\begin{remark}
Notice that, for all $n \geq 0$, there is a canonical isomorphism
\[
\treemod_{\pl, \leq n+1} X \cong \operad I \oplus X \comp_\pl \treemod_{\pl,\leq n} X.
\]
that yields an isomorphism $\treemod_\pl X \cong \operad I \oplus X \comp \treemod_{\pl} X$.
\end{remark}

The following two propositions follows from a straightforward checking.

\begin{proposition}
The planar tree module endofunctor $\treemod_\pl$ admits the following monad structure:

\medskip

\begin{enumerate}
\item The unit $\eta_X: X \longrightarrow \treemod_\pl X$ is the canonical inclusion, where elements in $X(m)$ are seen as $m$-corollas.

\medskip

\item The product $\gamma_X: \treemod_\pl~\treemod_\pl(X) \longrightarrow \treemod_\pl (X)$ is given, for a planar tree $t$ with $n$ leaves, as follows
\[
\begin{tikzcd}[column sep=0.5pc,row sep=2.5pc]
    t(\treemod_\pl X)(n) \coloneqq (\treemod_\pl X)(n_1) \otimes \cdots \otimes (\treemod_\pl X)(n_k)
    \ar[d,"\cong"] \\
	\displaystyle \bigoplus_{t_1, \ldots, t_k} t_1(X)(n_1) \otimes \cdots \otimes t_k(X)(n_k)
	\ar[d] \\
	\displaystyle \bigoplus_{t_1, \ldots, t_k} \mathrm{graft}_t(t_1, \ldots, t_k)(X)(n)~,
\end{tikzcd}
\]
where $\mathrm{graft}_t(t_1, \ldots, t_k)$ is the planar tree obtained from $t$ by replacing each node $i$ by the planar tree $t_i$ which labelled it.
\end{enumerate}
\end{proposition}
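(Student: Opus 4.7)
The plan is to verify the two monad axioms — unit and associativity — by unwinding the combinatorial definitions of $\eta$ and $\gamma$, and reducing everything to a basic associativity property of planar tree grafting.

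First, I would observe that both $\eta_X$ and $\gamma_X$ are manifestly natural in $X$, since they are defined summand by summand indexed over (iterated) planar trees, with no implicit identifications. So naturality does not need to be checked separately.

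For the two unit axioms $\gamma_X \circ \eta_{\treemod_\pl X} = \id$ and $\gamma_X \circ \treemod_\pl(\eta_X) = \id$, the verification is local on summands. In the first composition, an element of $\treemod_\pl X(m)$ is viewed via $\eta_{\treemod_\pl X}$ as an $m$-corolla in $\treemod_\pl \treemod_\pl X$ decorated by that very element; grafting the $m$-corolla with a single subtree returns the subtree, so the composite is the identity. In the second composition, a tree $t$ with $k$ nodes of arities $n_1,\ldots,n_k$ is sent by $\treemod_\pl(\eta_X)$ to the same tree $t$, but with each node $i$ labelled by the $n_i$-corolla in $\treemod_\pl X$ carrying the original decoration; grafting an $n_i$-corolla into node $i$ of $t$ reproduces node $i$ of $t$, so we recover $t$ itself.

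For associativity, $\gamma_X \circ \gamma_{\treemod_\pl X} = \gamma_X \circ \treemod_\pl(\gamma_X)$, I would again argue summand by summand. An element of $\treemod_\pl \treemod_\pl \treemod_\pl X$ is indexed by a triple $(t, (t_i)_{i}, (t_{i,j})_{i,j})$ of a planar tree $t$, planar trees $t_i$ decorating the nodes of $t$, and planar trees $t_{i,j}$ decorating the nodes of each $t_i$, together with a decoration of the leaves of the $t_{i,j}$ by elements of $X$. Both composites produce the single grafted planar tree $\mathrm{graft}_t(\mathrm{graft}_{t_1}(t_{1,*}),\ldots,\mathrm{graft}_{t_k}(t_{k,*}))$, with the same leaf decoration. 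The equality of the two sides therefore reduces to the combinatorial identity that simultaneous grafting of planar trees is associative, which is immediate from the inductive definition of $\mathrm{graft}$ (replace each node by the prescribed subtree).

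The only potential obstacle is bookkeeping: one must ensure that the chosen isomorphisms $t(\treemod_\pl X)(n) \cong \bigoplus_{t_1,\ldots,t_k} t_1(X)(n_1)\otimes\cdots\otimes t_k(X)(n_k)$ in the definition of $\gamma$ are used coherently on both sides of the associativity square, so that the resulting identification of tensor factors with leaves of the grafted tree matches. Once one fixes a convention (for instance, reading nodes of $t$ in the planar order and, within each $t_i$, reading its nodes in the planar order), both composites induce the same ordering of the $X$-decorations on the final grafted tree, and the identity of morphisms of dg $\mathbb N$-modules follows.
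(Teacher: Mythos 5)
Your proposal is correct and matches the paper's approach: the paper simply asserts that the monad axioms follow from a straightforward check, and your argument is exactly that check carried out, with the unit axioms verified on corollas and associativity reduced to the associativity of planar grafting. The bookkeeping remark about reading nodes in the planar order is the right way to make the summandwise identifications coherent.
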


\begin{proposition}
    The category of algebras over the monad $\treemod_\pl$ is canonically isomorphic to the category of planar dg operads.
\end{proposition}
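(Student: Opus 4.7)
The plan is to construct functors in both directions between $\treemod_\pl$-algebras and planar dg operads, and to verify that they are mutually inverse. Morally, $\treemod_\pl$ is the free-monoid monad on dg $\mathbb N$-modules with respect to the composition product $\comp_\pl$: the summand indexed by a tree $t$ corresponds to the unique iterated composition whose shape is $t$.

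Starting from a planar dg operad $(\operad P, \gamma, \eta)$, I would define a $\treemod_\pl$-algebra structure $\mu : \treemod_\pl \operad P \to \operad P$ tree by tree. For the trivial tree (no nodes) use the operadic unit $\eta : \operad I \to \operad P$; for a $k$-corolla use the identity of $\operad P(k)$; and for a tree $t = \mathrm{graft}_c(t_1, \ldots, t_k)$ built from a $k$-corolla grafted with smaller trees $t_i$, use the composite
\[
t(\operad P) = \operad P(k) \otimes t_1(\operad P) \otimes \cdots \otimes t_k(\operad P) \longrightarrow \operad P \comp_\pl \operad P \xrightarrow{\ \gamma\ } \operad P,
\]
where the first arrow applies inductively defined maps $t_i(\operad P) \to \operad P$ in each factor. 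The operadic associativity ensures the construction is unambiguous, and summing over all planar trees with $m$ leaves produces $\mu$ in arity $m$. The two $\treemod_\pl$-algebra axioms for $\mu$ then follow from the unit and associativity of $(\operad P, \gamma, \eta)$, using crucially that the monad multiplication $\gamma_X$ of $\treemod_\pl$ is grafting of trees.

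In the reverse direction, a $\treemod_\pl$-algebra $(\operad P, \mu)$ yields a planar dg operad with unit $\eta : \operad I \hookrightarrow \treemod_\pl \operad P \xrightarrow{\mu} \operad P$ coming from the trivial tree summand, and composition $\gamma : \operad P \comp_\pl \operad P \hookrightarrow \treemod_\pl \operad P \xrightarrow{\mu} \operad P$ obtained by sending an element of $\operad P \comp_\pl \operad P$ to its associated two-level planar tree. The operadic axioms for $(\operad P, \gamma, \eta)$ follow from the $\treemod_\pl$-algebra axioms applied to two- and three-level trees, together with the recursive decomposition $\treemod_\pl X \cong \operad I \oplus X \comp_\pl \treemod_\pl X$ recalled in the previous remark.

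Both assignments are manifestly functorial, since morphisms on either side are morphisms of underlying dg $\mathbb N$-modules compatible with the respective structure maps, and the constructions use only those structure maps. The only point requiring genuine care, and the main obstacle, is to check that the two assignments are mutually inverse: starting from $(\gamma, \eta)$ one recovers $\gamma$ from $\mu$ via two-level trees by construction, while starting from $\mu$ the tree-by-tree formula reproduces $\mu$ because every planar tree can be built recursively by grafting two-level pieces and the algebra axiom forces $\mu$ to respect that decomposition. This amounts to a careful induction on the height of trees, controlled by the recursive isomorphism $\treemod_{\pl,\leq n+1} X \cong \operad I \oplus X \comp_\pl \treemod_{\pl,\leq n} X$.
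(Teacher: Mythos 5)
Your proposal is correct and follows essentially the same route as the paper's (much terser) proof: recover the operad composition by restricting the structure map to two-levelled trees, and extend an operad structure to a full $\treemod_\pl$-algebra by induction on the height of trees via the recursive isomorphism $\treemod_{\pl,\leq n+1} X \cong \operad I \oplus X \comp_\pl \treemod_{\pl,\leq n} X$. You simply spell out in more detail the functoriality and the mutual-inverse check that the paper leaves implicit.
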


\begin{proof}
The restriction of the structural map to $\treemod_{\pl, \leq 2}$ gives the composition product, and induces an operad structure. Any operad structure can be extended to a full algebra structure over $\treemod_\pl$ by induction on the level of planar trees.
\end{proof}

%----------------------------------------------------

\subsection{Planar conilpotent cooperads}
Conilpotent dg cooperads correspond exactly to those that arise as coalgebras over the reduced planar tree comonad. 

\begin{proposition}
The reduced planar tree module endofunctor $\overline{\treemod}_\pl$ admits the following comonad structure:

\medskip

\begin{enumerate}
\item The counit map $\epsilon_X: \overline{\treemod}_\pl(X) \longrightarrow X$ is the projection onto corollas labelled by $X$. 

\medskip

\item The coproduct $\Delta_X: \overline{\treemod}_\pl(X) \longrightarrow  \overline{\treemod}_\pl ~\overline{\treemod}_\pl(X)$ is given by partitioning non-trivial planar trees into non-trivial planar sub-trees. For a planar tree $t$ with $n$ leaves, the coproduct is given by the finite sum over partitions $t_1, \ldots, t_m$ of $t$ into non trivial sub-trees of the maps
\[
\begin{tikzcd}[column sep=0.5pc,row sep=2.5pc]
    t(X)(n)
    \ar[d, "\cong"]
    \\ t_1(X)(n_1) \otimes \cdots \otimes t_m(X)(n_m)
    \ar[d]
    \\ \overline{\treemod}_\pl(X)(n_1) \otimes \cdots \otimes \overline{\treemod}_\pl(X)(n_m)
    \ar[d, "\cong"]
    \\  t/(t_1, \ldots, t_m)(\overline{\treemod}_\pl(X))
    \ar[d, hookrightarrow]
    \\\overline{\treemod}_\pl\overline{\treemod}_\pl(X)~.
\end{tikzcd}
\]
\end{enumerate}
\end{proposition}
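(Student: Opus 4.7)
The plan is to establish the two comonad axioms for $(\overline{\treemod}_\pl, \epsilon, \Delta)$: counitality and coassociativity. Naturality of both $\epsilon$ and $\Delta$ is immediate, since the projection onto corollas and the partition construction depend only on the underlying tree structures, with the labels in $X$ being carried along functorially.

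For counitality, let $t$ be a non-trivial planar tree with $n$ leaves, viewed as a summand of $\overline{\treemod}_\pl(X)(n)$ with its nodes labelled by elements of $X$. Applying $\Delta_X$ gives a sum over partitions $(t_1, \ldots, t_m)$ of $t$ into non-trivial sub-trees, landing in $\overline{\treemod}_\pl\,\overline{\treemod}_\pl(X)$. Composing with $\epsilon_{\overline{\treemod}_\pl X}$ selects the unique summand whose \emph{outer} tree $t/(t_1,\ldots,t_m)$ is a corolla, which forces $m=1$ and $t_1=t$, and returns $t$ unchanged. Composing instead with $\overline{\treemod}_\pl(\epsilon_X)$ selects the summand in which every $t_i$ is a corolla, forcing the $t_i$ to be the individual nodes of $t$ and $t/(t_1,\ldots,t_m)\cong t$; again this recovers $t$. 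Hence both composites equal the identity.

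For coassociativity, I would compare the two composites $\Delta_{\overline{\treemod}_\pl X}\circ\Delta_X$ and $\overline{\treemod}_\pl(\Delta_X)\circ\Delta_X$. The first partitions $t$ into $(t_1,\ldots,t_m)$ and then partitions the outer tree $t/(t_1,\ldots,t_m)$ further, while the second partitions $t$ into some $(s_1,\ldots,s_k)$ and then partitions each $s_j$ into non-trivial sub-trees. Both sums are indexed by what one may call a \emph{two-step partition} of $t$, and I would exhibit a canonical bijection between the two indexing sets, matching an outer partition plus outer-refinement to the induced fine partition together with the coarse grouping that produces it. Under the canonical isomorphisms $t(X)(n)\cong t_1(X)(n_1)\otimes\cdots\otimes t_m(X)(n_m)$ used in the definition of $\Delta$, the two resulting maps then coincide summand by summand.

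The main obstacle will be a careful combinatorial treatment of partitions of planar trees, dual to the grafting construction used in the previous proposition: a partition of $t$ corresponds to a choice of inner edges to cut, yielding sub-trees $t_i$ and an outer tree $t/(t_1,\ldots,t_m)$ whose grafting reconstructs $t$. Coassociativity then reduces to the assertion that the set of two successive cuts on $t$ is in bijection with the set of a single fine cut together with a grouping of its pieces into connected sub-trees, and that the associated tensor identifications agree. This is the exact dual of the associativity check for the monad structure on $\treemod_\pl$ from the preceding proposition and proceeds by the same straightforward inspection.
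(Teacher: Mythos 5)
Your proposal is correct and follows the same route as the paper, which simply declares the verification of the comonad axioms to be a straightforward check; you have carried out that check explicitly. The counitality argument (isolating the one-block partition for the outer counit and the all-corollas partition for the inner one) and the coassociativity argument via the bijection between two-step partitions of a planar tree are exactly the intended verifications, dual to the associativity of grafting used for the monad structure on $\treemod_\pl$.
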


\begin{proof}
This follows from a straightforward checking.    
\end{proof}

\begin{corollary}\label{corollarydecompositionplanarccooperad}
For every natural integer $n \geq 1$, the comonad structure on $\overline{\treemod}_\pl$ restricts to $\overline{\treemod}_{\pl,\leq n}$ and $\overline{\treemod}_\pl^{(\leq n)}$, making the natural inclusions 
\[
\overline{\treemod}_{\pl,\leq n} \rightarrowtail \overline{\treemod}_\pl~, \quad \quad \overline{\treemod}_\pl^{(\leq n)} \rightarrowtail \overline{\treemod}_\pl~,
\]
morphisms of comonads.
\end{corollary}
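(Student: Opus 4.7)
The plan is to verify that both the counit $\epsilon_X$ and the coproduct $\Delta_X$ of the comonad $\overline{\treemod}_\pl$ preserve each of the two subfunctors $\overline{\treemod}_{\pl,\leq n}$ and $\overline{\treemod}_\pl^{(\leq n)}$; once this is established, the comonad axioms for the restricted structures and the fact that the natural inclusions are morphisms of comonads follow formally, since the structure maps on the subfunctors are by definition restrictions of those on $\overline{\treemod}_\pl$.

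The counit case is immediate: since $\epsilon_X$ is the projection onto corollas and every corolla has height $1$ and exactly one node, the counit restricts for every $n \geq 1$. The real work is with the coproduct, which I would analyse summand by summand on the defining formula. A partition of a non-trivial planar tree $t$ into non-trivial planar subtrees $t_1, \ldots, t_m$ produces an element whose outer ``tree of trees'' is $t/(t_1, \ldots, t_m)$, with exactly $m$ nodes, labelled by the inner trees $t_1, \ldots, t_m$. Two elementary combinatorial observations then suffice: first, every $t_i$ is a subtree of $t$, so both its height and its number of nodes are bounded by those of $t$; second, every root-to-leaf path in $t/(t_1, \ldots, t_m)$ lifts to a root-to-leaf path in $t$ that successively traverses the contracted blocks, so the height of $t/(t_1, \ldots, t_m)$ is at most that of $t$, and since each $t_i$ contains at least one node, $m$ is bounded by the total number of nodes of $t$.

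Consequently, if $t$ has height (resp.\ total weight) at most $n$, the same bound holds for both the outer tree $t/(t_1,\ldots,t_m)$ and each inner subtree $t_i$. Hence $\Delta_X$ restricts to well-defined maps $\overline{\treemod}_{\pl,\leq n}(X) \to \overline{\treemod}_{\pl,\leq n}\,\overline{\treemod}_{\pl,\leq n}(X)$ and $\overline{\treemod}_\pl^{(\leq n)}(X) \to \overline{\treemod}_\pl^{(\leq n)}\,\overline{\treemod}_\pl^{(\leq n)}(X)$. There is no substantial obstacle beyond these bookkeeping inequalities; coassociativity, counitality, and the morphism-of-comonads property of the inclusions are then immediate consequences of the corresponding statements for $\overline{\treemod}_\pl$.
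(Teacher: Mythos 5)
Your proposal is correct and follows the same route as the paper: the paper's proof likewise reduces everything to the observation that for any partition $t_1, \ldots, t_m$ of a non-trivial tree $t$, the subtrees $t_i$ and the quotient $t/(t_1, \ldots, t_m)$ all inherit the relevant bound on height (resp.\ number of nodes) from $t$. Your version merely spells out the bookkeeping inequalities in slightly more detail.
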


\begin{proof}
This follows from the definition of the coproduct of the comonad. For every non-trivial tree $t$ with at most $n$-nodes (resp. of height at most $n$) and every partition $t_1, \ldots, t_m$ of $t$, all the planar trees $t_1, \ldots, t_m, t/(t_1, \ldots, t_m)$ have at most $n$-nodes (resp. have height at most $n$).
\end{proof}

A $\overline{\treemod}_\pl$-coalgebra structure induces a comonoid structure for the planar composition product of dg $\mathbb N$-modules.

\begin{lemma}\label{lemma: T-cog implique comonoide pour le circ}
For every dg $\overline{\treemod}_\pl$-coalgebra $(W,\delta_W)$, the following square
    \[
    \begin{tikzcd}[column sep=2.5pc,row sep=2.5pc]
        W
        \ar[r,"\delta_W"] \ar[d,"\delta_W",swap]
        & \overline{\treemod}_{\pl} W
        \ar[r, two heads]
        & \overline{\treemod}_{\pl,\leq 2} W
        \ar[r, "\cong"]
        & W \comp (\operad I \oplus W)
        \ar[d,rightarrowtail]
        \\
        \overline{\treemod}_{\pl} W
        &&&  W \comp {\treemod}_{\pl} W.
        \ar[lll, "\cong",swap]
    \end{tikzcd}
    \]
is a commutative diagram.
\end{lemma}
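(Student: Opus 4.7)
The strategy is to unfold every isomorphism in the square and reduce the claim to a combinatorial identity on planar trees. The bottom iso $W \comp \treemod_\pl W \xrightarrow{\cong} \overline{\treemod}_\pl W$ is the canonical grafting identification coming from the recursion $\treemod_\pl Y \cong \operad I \oplus Y \comp \treemod_\pl Y$: a datum $w \otimes t_1 \otimes \cdots \otimes t_k$ on the left, with $w \in W(k)$ and each $t_i$ a (possibly trivial) planar tree labeled by $W$, corresponds on the right to the non-trivial tree obtained by grafting the $t_i$ onto the inputs of the root corolla determined by $w$. The top iso $\overline{\treemod}_{\pl,\leq 2} W \xrightarrow{\cong} W \comp (\operad I \oplus W)$ is the very same identification restricted to the case where every $t_i$ is either trivial (contributing $\operad I$) or a single corolla (contributing $W$), which is exactly the height-$\leq 2$ locus.

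The first step is to check that the right half of the square, namely the composite
\[
\overline{\treemod}_{\pl,\leq 2} W \xrightarrow{\cong} W \comp (\operad I \oplus W) \hookrightarrow W \comp \treemod_\pl W \xrightarrow{\cong} \overline{\treemod}_\pl W,
\]
is nothing but the canonical inclusion of the height-$\leq 2$ summand; this is immediate once the isos are written out, since the middle inclusion is induced by $\operad I \oplus W \hookrightarrow \treemod_\pl W$ and the two outer isos are restriction-compatible. Consequently, the full composite around the top and right of the square equals $\delta_W$ post-composed with the projection $\overline{\treemod}_\pl W \twoheadrightarrow \overline{\treemod}_{\pl,\leq 2} W$ and the above reinclusion.

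The second step is then to compare this with the left arrow, which is simply $\delta_W$. Here one unpacks what the coaction $\delta_W \colon W \to \overline{\treemod}_\pl W$ contributes in each tree-height stratum, using the counit axiom (which determines the corolla summand of $\delta_W(w)$ to be $w$) and the coassociativity axiom, together with the fact from Corollary~\ref{corollarydecompositionplanarccooperad} that the comonad structure on $\overline{\treemod}_\pl$ restricts to its height-$\leq 2$ sub-endofunctor.

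The main obstacle is this final bookkeeping: one must track, tree by tree, how the height filtration interacts with $\delta_W$ and verify that no information is lost when passing through the truncation and re-embedding. I expect this step to be essentially mechanical once the grafting iso and its restriction are carefully set up, but it is the only place where the coalgebra axioms are genuinely used rather than just the abstract shape of the diagram.
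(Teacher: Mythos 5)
Your reduction in the second paragraph identifies the unlabelled monomorphism $W \comp_\pl (\operad I \oplus W) \rightarrowtail W \comp_\pl \treemod_\pl W$ with the canonical inclusion induced by $\operad I \oplus W = \treemod_{\pl,\leq 1}W \hookrightarrow \treemod_\pl W$. Under that reading your computation of the right half of the square is internally correct --- the clockwise composite becomes ``apply $\delta_W$, truncate to height $\leq 2$, re-include'' --- but then the lemma would assert that $\delta_W$ is concentrated on trees of height at most $2$, which is false in general: for the cofree coalgebra $W=\overline{\treemod}_\pl V$ the coaction of an element indexed by a tree $t$ has a component corresponding to the partition of $t$ into its one-node subtrees, whose quotient tree is $t$ itself, hence of arbitrary height. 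So the ``final bookkeeping'' you defer to cannot be completed: the truncation genuinely destroys the components of $\delta_W(w)$ of height $>2$, and the canonical re-inclusion does not restore them.

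The map intended in the statement (the one forced by the paper's proof and by the lemma's role in establishing coassociativity of $\mathrm{Conil}(W)$) is instead $\id_W \comp_\pl (\delta_W \oplus \id_{\operad I})$: each corolla sitting above the root is re-expanded by applying $\delta_W$ again. The lemma then says that $\delta_W$ is recovered from its two-level truncation by this recursive re-expansion, and the paper proves it by pasting together: the coassociativity square $\Delta_W\,\delta_W=\overline{\treemod}_\pl(\delta_W)\,\delta_W$; the naturality, applied to $\delta_W$, of the projection $\overline{\treemod}_\pl \twoheadrightarrow \overline{\treemod}_{\pl,\leq 2}$ and of the isomorphism $\overline{\treemod}_{\pl,\leq 2}Y\cong Y\comp_\pl(\operad I\oplus Y)$; and the observation that the comonad comultiplication followed by two-level truncation and projection of the root factor onto corollas returns the canonical isomorphism $\overline{\treemod}_\pl W\cong W\comp_\pl\treemod_\pl W$, the counit axiom $\epsilon_W\,\delta_W=\id_W$ being what collapses the root factor. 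You would need to restate your steps with this corrected vertical arrow; once that is done, the argument you sketch essentially becomes the paper's.
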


\begin{proof}
    This square decomposes into the diagram
    $$
    \begin{tikzcd}[column sep=2pc,row sep=2pc]
        W
        \ar[r, "\delta_W"] \ar[d, "\delta_W"']
        & \overline{\treemod}_{\pl} W
        \ar[r, two heads]\ar[d, "\overline{\treemod}_\pl(\delta_W)"]
        & \overline{\treemod}_{\pl,\leq 2} W
        \ar[r, "\cong"] \ar[d, "\overline{\treemod}_{\pl, \leq 2}(\delta_W)"]
        & W \comp_\pl (W \oplus \operad I)
        \ar[d, "\delta_W \comp_\pl \delta_W"]
        \\
        \overline{\treemod}_{\pl} W
        \ar[r, "\delta_W"]
        &\overline{\treemod}_{\pl} \overline{\treemod}_{\pl} W
        \ar[r, two heads]
        & \overline{\treemod}_{\pl, \leq 2} \overline{\treemod}_{\pl} W
        \ar[r, "\cong"]
        & \overline{\treemod}_{\pl} W \comp_\pl  (\operad I \oplus \overline{\treemod}_{\pl})
        \ar[d, "\cong"]
        \\ 
        &&& \overline{\treemod}_{\pl} W \comp_\pl  {\treemod}_{\pl} W
        \ar[d, two heads]
        \\ 
        &&&  W \comp_\pl  {\treemod}_{\pl} W~,
        \ar[uulll, "\cong"]
    \end{tikzcd}
    $$
    of which all the cells are commutative.
\end{proof}

Let $(W,\delta_W)$ be a dg $\overline{\treemod}_\pl$-coalgebra. We consider the map
\[
\Delta_W: \operad I \oplus W \longrightarrow (\operad I \oplus W)\comp_\pl (\operad I \oplus W)
\]
defined as the sum of the maps

\medskip

\begin{tikzcd}[column sep=1.5pc,row sep=0.5pc]
    \operad I \oplus W \arrow[r,twoheadrightarrow]
    &W \arrow[r,"\delta_W"]
    &\overline{\treemod}_\pl W \arrow[r,twoheadrightarrow]
    &\overline{\treemod}_{\pl, \leq 2} W \cong W \comp_\pl (\operad I \oplus W) \arrow[r,rightarrowtail]
    &(\operad I \oplus W)\comp_\pl (\operad I \oplus W)~,
\end{tikzcd}

\begin{tikzcd}[column sep=2pc,row sep=0.5pc]
    \operad I \oplus W \arrow[r,twoheadrightarrow]
    &W \cong \operad I \comp_\pl W \arrow[r,rightarrowtail]
    &(\operad I \oplus W)\comp_\pl (\operad I \oplus W)~,
\end{tikzcd}

\begin{tikzcd}[column sep=2pc,row sep=0.5pc]
    \operad I \oplus W \arrow[r,twoheadrightarrow]
    &\operad I \arrow[r]
    &\operad I \comp_\pl \operad I~.
\end{tikzcd}

\medskip

Together with the counit $\epsilon_W: \operad I \oplus W \twoheadrightarrow \operad I$ and the coaugmentation $\mu_W: \operad I \rightarrowtail \operad I \oplus W$, they form a coaugmented planar cooperad structure on the dg $\mathbb N$-module $\operad I \oplus W$.

\medskip

This defines a functor 
\[
\mathrm{Conil}: \mathsf{dg}~\overline{\treemod}_\pl\text{-}\mathsf{cog} \longrightarrow (\dgcooperads_\pl)_{\operad I/}
\]
from dg $\overline{\treemod}_\pl$-coalgebras to coaugmented planar dg cooperads.

\begin{proposition}\label{propositionplanarccooperadff}
    The functor $\mathrm{Conil}$ from dg $\overline{\treemod}_\pl$-coalgebras to coaugmented planar dg cooperads
    is fully faithful.
\end{proposition}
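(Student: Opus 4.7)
The plan is to identify morphisms of coaugmented planar dg cooperads $f \colon \operad I \oplus W_1 \to \operad I \oplus W_2$ with morphisms $g \colon W_1 \to W_2$ of $\overline{\treemod}_\pl$-coalgebras. First, since $f$ must commute with both counits and coaugmentations, the composite $W_1 \hookrightarrow \operad I \oplus W_1 \xrightarrow{f} \operad I \oplus W_2 \twoheadrightarrow \operad I$ vanishes and the $\operad I$-component of $f$ is forced to be the identity; hence $f = \mathrm{id}_{\operad I} \oplus g$ is uniquely determined by a map $g$ of dg $\mathbb N$-modules, which already gives faithfulness. It then suffices to show that $g$ makes $\mathrm{id}_{\operad I} \oplus g$ a cooperad morphism if and only if $g$ is a morphism of $\overline{\treemod}_\pl$-coalgebras.

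One direction is immediate from the construction of $\mathrm{Conil}$: Lemma \ref{lemma: T-cog implique comonoide pour le circ} and the subsequent definition of $\Delta_W$ use the coaction $\delta_W$ only in a natural way, so any $\overline{\treemod}_\pl$-coalgebra morphism $g$ automatically intertwines the cooperad coproducts.

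For the converse, assume that $\mathrm{id}_{\operad I} \oplus g$ commutes with the cooperad coproducts. I decompose the coaction along trees as
\[
\delta_W \;=\; \bigoplus_t \delta_W^t, \qquad \delta_W^t \colon W \longrightarrow t(W),
\]
and prove by induction on the number of nodes of $t$ that $g$ intertwines $\delta_{W_1}^t$ with $\delta_{W_2}^t$. For $t$ a corolla, the counit axiom identifies $\delta_W^t$ with the canonical projection onto the appropriate arity component, so the claim is trivial. For the inductive step, I use the comonad coassociativity
\[
\overline{\treemod}_\pl(\delta_W) \circ \delta_W \;=\; \Delta^{\overline{\treemod}_\pl}_W \circ \delta_W
\]
together with Corollary \ref{corollarydecompositionplanarccooperad} applied to the weight subcomonad $\overline{\treemod}_{\pl}^{(\leq n)}$: this lets me factor $\delta_W^t$ for a tree with $n+1$ nodes through components $\delta_W^{t'}$ with at most $n$ nodes, followed by the level-$2$ part of $\delta_W$. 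By Lemma \ref{lemma: T-cog implique comonoide pour le circ}, that level-$2$ part is precisely the data of the cooperad coproduct, hence intertwined by $g$ by hypothesis; combined with the inductive hypothesis, this yields the naturality of $\delta_W^t$. Summing over $t$ gives that $g$ is a $\overline{\treemod}_\pl$-coalgebra morphism.

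The main obstacle I expect is the combinatorial bookkeeping that realizes each tree component $\delta_W^t$ as an iterated application of the cooperad coproduct: the cleanest way to organize this is through the weight filtration $\overline{\treemod}_{\pl}^{(\leq n)}$ from Corollary \ref{corollarydecompositionplanarccooperad}, so that one runs a single induction on the weight and only has to manage the passage from level $n$ to level $n+1$ via one application of Lemma \ref{lemma: T-cog implique comonoide pour le circ}, rather than a delicate tree-by-tree argument.
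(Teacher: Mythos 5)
Your argument is correct and follows essentially the same route as the paper: faithfulness from the underlying dg $\mathbb N$-modules, and fullness by an induction that peels off the root node and reduces everything to the level-$2$ part of the coaction, which is (the nontrivial component of) the cooperad coproduct and hence intertwined by hypothesis via Lemma \ref{lemma: T-cog implique comonoide pour le circ}. The only cosmetic difference is that you induct on the number of nodes through $\overline{\treemod}_{\pl}^{(\leq n)}$, whereas the paper inducts on the height through $\overline{\treemod}_{\pl,\leq n}$ (using that $\overline{\treemod}_{\pl}X \to \lim_{n}\overline{\treemod}_{\pl,\leq n}X$ is a monomorphism); both filtrations work for the same reason, since removing the root node drops both the height and the node count.
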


\begin{proof}
This functor is faithful since its composition with the forgetful functor towards dg $\mathbb N$-modules is faithful.
    Let us prove that it is is full.
    Let $(V,\delta_V)$ and $(W,\delta_W)$ be two dg $\overline{\treemod}_\pl$-coalgebras
    and let us consider a morphism of coaugmented planar dg cooperads
    $f: \operad I \oplus V \to \operad I \oplus W$. Proving that
    $f$ stems from a morphism of dg $\overline{\treemod}_\pl$-coalgebras amounts to prove that the restricted map $\overline{f}: V \to W$ is a morphism of dg $\overline{\treemod}_\pl$-coalgebras.
    
    \medskip
    
    Since the natural map $\overline{\treemod}_\pl X
    \to \lim_{n \in \omega^\op}\overline{\treemod}_{\pl,\leq n} X$ is a monomorphism, it suffices to prove that for every natural integer $n \geq 1$
    the diagram
    $$
    \begin{tikzcd}[column sep=2.5pc,row sep=2.5pc]
        V
        \ar[r, "\overline{f}"] \ar[d,"\delta_V",swap]
        & W \ar[d,"\delta_W"]
        \\
        \overline{\treemod}_\pl V
        \ar[d,two heads]
        & \overline{\treemod}_\pl W
        \ar[d, two heads]
        \\
        \overline{\treemod}_{\pl,\leq n} V
        \ar[r, "\overline{\treemod}_{\pl,\leq n}(\overline{f})"']
        & \overline{\treemod}_{\pl,\leq n} W
    \end{tikzcd}
    $$
    is commutative. 
    
    \medskip
    
    For $n=1$, this square is clearly commutative. Let us assume that it is commutative for an integer $n \geq 1$ and let us prove that is is commutative for $n+1$. The square above for $n+1$ rewrites as the external square of the diagram
    $$
    \begin{tikzcd}[column sep=2.5pc,row sep=2.5pc]
        V
        \ar[r] \ar[d]
        & W
        \ar[d]
        \\
        V \comp_\pl (I \oplus V)
        \ar[r] \ar[d]
        & W \comp_\pl (I \oplus W)
        \ar[d]
        \\
        V \comp_\pl {\treemod}_{\pl,\leq n} V
        \ar[r] \ar[d, "\cong"']
        & W \comp_\pl {\treemod}_{\pl,\leq n} W
        \ar[d, "\cong"]
        \\
        \overline{\treemod}_{\pl,\leq n+1} V
        \ar[r]
        & \overline{\treemod}_{\pl,\leq n+1} W~,
    \end{tikzcd}
    $$
    of which all cells are commutative. So we have proven by induction that such a square is commutative for every natural integer $n \geq 1$. Therefore $\overline{f}$ is a morphism of dg $\overline{\treemod}_\pl$-coalgebras.
\end{proof}

\begin{definition}[Conilpotent planar dg cooperad]
Let $\C$ be a coaugmented planar dg cooperad. It is \textit{conilpotent} if it belongs to the essential image of the functor $\mathrm{Conil}$ from dg $\overline{\treemod}_\pl$-coalgebras to planar dg cooperads. We denote $\dgcooperads^{\categ{conil}}_\pl$ the full sub-category of coaugmented planar dg cooperads spanned by conilpotent ones.
\end{definition}

\begin{remark}
The idea behind this definition is the following: a cooperad can also be described in terms of partial decomposition maps
\[
\Delta_i: \C(n+k-1) \longrightarrow \C(n) \otimes \C(k)~,
\]
and it is conilpotent if and only if any iteration of these partial decompositions is eventually trivial. If this is the case, then the data of all the possible iterations is exactly encoded by the $\overline{\treemod}_\pl$-coalgebra structure. 
\end{remark}

\begin{proposition}\label{proppresentabilityofplanarconilp}
    Let $n$ be a natural integer. The forgetful functor 
    \[
    \mathsf{dg}~\overline{\treemod}_{\pl}^{(\leq n)}\text{-}\mathsf{cog} \longrightarrow \mathsf{dg}~\overline{\treemod}_{\pl}\text{-}\mathsf{cog}~.
    \]
    is fully faithful. Furthermore, both categories are presentable, and therefore it admits a right adjoint denoted by $\overline{\mathrm{F}}_n$.
\end{proposition}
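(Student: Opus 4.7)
The plan is to split the statement into three parts: (i) fully faithfulness of the forgetful functor, (ii) presentability of both categories, and (iii) existence of the right adjoint.

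For (i), I would use Corollary \ref{corollarydecompositionplanarccooperad}, which says the inclusion $\overline{\treemod}_\pl^{(\leq n)} \hookrightarrow \overline{\treemod}_\pl$ is a morphism of comonads; this is precisely what induces the forgetful functor. The key observation is that this inclusion is a levelwise split monomorphism, being the inclusion of a direct summand indexed by the subfamily of planar trees with at most $n$ nodes. Faithfulness is immediate, since both forgetful functors to $\catdgmod{\mathbb N}$ are faithful. For fullness, given a morphism $f: V \to W$ of the underlying $\overline{\treemod}_\pl$-coalgebras, the identity $\overline{\treemod}_\pl^{(\leq n)}(f) \circ \delta_V = \delta_W \circ f$ holds because postcomposing either side with the monomorphism $\overline{\treemod}_\pl^{(\leq n)} W \hookrightarrow \overline{\treemod}_\pl W$ yields the already-assumed $\overline{\treemod}_\pl$-compatibility (by naturality of the comonad morphism).

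For (ii), the ambient category $\catdgmod{\mathbb N}$ is locally presentable, being a Grothendieck abelian category. Both comonads are finitary: each summand $t(-)$ is a finite tensor product of evaluation functors and therefore preserves filtered colimits, and since filtered colimits commute with coproducts in $\catdgmod{\mathbb N}$, the full coproduct over isomorphism classes of trees still preserves filtered colimits. Standard results then imply that the categories of coalgebras over these accessible comonads are locally presentable.

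For (iii), I invoke the adjoint functor theorem for locally presentable categories: a colimit-preserving functor between them admits a right adjoint. Colimits in coalgebras over a comonad are created by the underlying functor to $\catdgmod{\mathbb N}$ (a general fact for comonads, dual to the creation of limits for algebras), and the forgetful functor of the statement commutes with these two underlying functors by construction, hence preserves colimits. The main foundational input, rather than any computational obstacle, is the presentability of categories of coalgebras over accessible comonads on a locally presentable category, a classical but nontrivial result.
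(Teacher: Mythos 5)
Your proposal is correct and follows essentially the same route as the paper: full faithfulness from the comonad morphism being an objectwise monomorphism, presentability from the Ching--Riehl-type result on coalgebras over accessible comonads on a presentable category, and the right adjoint from presentability plus colimit preservation. The only cosmetic difference is in the last step, where the paper invokes the adjoint lifting theorem while you apply the adjoint functor theorem directly (using that colimits of coalgebras are created in the underlying category); these amount to the same argument.
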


\begin{proof}
The two categories are presentable. It follows from the fact that they are categories of coalgebras over accessible comonads on a presentable category. We refer to \cite{ChingRiehl} for more details. The fact that the functor is fully faithful follows from the fact that the morphism of comonads $\overline{\treemod}_{\pl}^{(\leq n)} \longrightarrow \overline{\treemod}_{\pl}$ is object-wise a monomorphism. The fact that it has a right adjoint is a direct consequence of the adjoint lifting theorem, see \cite[Appendix A]{premierpapier}.
\end{proof}

We will denote by $\mathrm{F}^{\mathrm{rad}}_n$ the following endofunctor 
\[
\begin{tikzcd}
\dgcooperads^{\categ{conil}}_\pl \arrow[r,"\cong"]
&\mathsf{dg}~\overline{\treemod}_{\pl}\text{-}\mathsf{cog} \arrow[r,"\overline{\mathrm{F}}_n"]
&\mathsf{dg}~\overline{\treemod}_{\pl}^{(\leq n)}\text{-}\mathsf{cog}  \arrow[r]
&\mathsf{dg}~\overline{\treemod}_{\pl}\text{-}\mathsf{cog}  \arrow[r,"\cong"]
&\dgcooperads^{\categ{conil}}_\pl 
\end{tikzcd}
\]

of the category of conilpotent planar dg cooperads.

\begin{definition}[Coradical filtration]
Let $\C$ be a conilpotent planar dg cooperad. Its $n$-\textit{coradical filtration} is given by the conilpotent planar dg cooperad $\mathrm{F}^{\mathrm{rad}}_n \C$. It induces a ladder diagram 
\[
\mathrm{F}^{\mathrm{rad}}_0 \C \rightarrowtail \mathrm{F}^{\mathrm{rad}}_1 \C \rightarrowtail \cdots \mathrm{F}^{\mathrm{rad}}_n \C \rightarrowtail \cdots~,
\]
indexed by $\mathbb{N}$, where all the arrows are monomorphisms.
\end{definition}

For any  conilpotent planar dg cooperad $\C$, there is a canonical isomorphism between $\C$ and the colimit of the following ladder diagram
\[
\mathrm{F}^{\mathrm{rad}}_0 \C \rightarrowtail \mathrm{F}^{\mathrm{rad}}_1 \C \rightarrowtail \cdots \mathrm{F}^{\mathrm{rad}}_n \C \rightarrowtail \cdots
\]
in the category of conilpotent planar dg cooperads.

\begin{proposition}
Let $n \geq 0$ and $\C$ be a conilpotent planar dg cooperad. Then $\mathrm{F}^{\mathrm{rad}}_n \C$ fits in the following pullback 
\[
        \begin{tikzcd}[column sep=2.5pc,row sep=2.5pc]
           \mathrm{F}^{\mathrm{rad}}_n \C \arrow[dr, phantom, "\lrcorner", very near start]
            \ar[r] \ar[d]
            & \overline{\treemod}_{\pl}^{(\leq n)} \C
            \ar[d,rightarrowtail]
            \\
            \C
            \ar[r,"\delta_\C"]
            & \overline{\treemod}_{\pl} \C.
        \end{tikzcd}
\]
in the category of dg $\mathbb{N}$-modules, where $\delta_\C$ denotes the dg $\overline{\treemod}_{\pl}$-coalgebra structure of $\C$.
\end{proposition}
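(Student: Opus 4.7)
The plan is to identify the pullback $P$ (in dg $\mathbb{N}$-modules) with the right adjoint $\overline{\mathrm{F}}_n$ of Proposition~\ref{proppresentabilityofplanarconilp} applied to the $\overline{\treemod}_\pl$-coalgebra $\overline{\C}$ corresponding to $\C$ under the equivalence of Proposition~\ref{propositionplanarccooperadff}. Write $T' \coloneqq \overline{\treemod}_\pl^{(\leq n)}$ and $T \coloneqq \overline{\treemod}_\pl$, with comonad morphism $\phi : T' \rightarrowtail T$ from Corollary~\ref{corollarydecompositionplanarccooperad}, and $W \coloneqq \overline{\C}$ with its $T$-coalgebra structure $\delta_W$. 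There are two tasks: (i) equip $P$ with a canonical $T'$-coalgebra structure, and (ii) verify the universal property characterising $\overline{\mathrm{F}}_n W$.

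For (i): the functor $T'$ is exact on the abelian category of dg $\mathbb{N}$-modules over a field (being a direct sum of finite tensor products), hence it preserves the monomorphism $P \rightarrowtail W$, and the resulting monomorphism $T' P \rightarrowtail T' W$ identifies $T' P$ with the sub-object of $T' W$ consisting of those tensors whose decorations all lie in $P$. The composite $\delta_W|_P : P \to T' W$ is provided by the pullback, and the point is to lift it along this mono to a map $\delta_P : P \to T' P$. For $p \in P$, coassociativity of $\delta_W$ gives $(T \delta_W) \circ \delta_W(p) = \Delta_{T,W} \circ \delta_W(p)$; since $\delta_W(p) \in T' W$, the right-hand side lies in $T' T' W$ by Corollary~\ref{corollarydecompositionplanarccooperad}. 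Reading the equality through the direct-sum decomposition of $T T W$ indexed by pairs of (outer tree, inner trees), the identity forces every decoration appearing as a leaf of $\delta_W(p)$ to have $\delta_W$-image in $T' W$, i.e.\ to lie in $P$. This yields the desired lift $\delta_P$, whose coassociativity and counitality descend from those of $\delta_W$.

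For (ii): given a $T'$-coalgebra $(V, \delta_V)$ and a morphism $f : V \to W$ of dg $\mathbb{N}$-modules satisfying the compatibility $\delta_W \circ f = \phi_W \circ T'(f) \circ \delta_V$ (which is the condition that $f$ underlies a $T$-coalgebra morphism from $V$ equipped with the induced $T$-coalgebra structure), the pair $\bigl(f,\, T'(f) \circ \delta_V\bigr)$ forms a cone on the pullback diagram and hence factors uniquely as a map $V \to P$. A routine diagram chase confirms this map is a morphism of $T'$-coalgebras. Therefore $P \cong \overline{\mathrm{F}}_n W$ as $T'$-coalgebras, and translating across the equivalence of Proposition~\ref{propositionplanarccooperadff} gives $\mathrm{F}^{\mathrm{rad}}_n \C \cong \operad I \oplus P$, fitting into the claimed pullback square (the $\operad I$ summand of $\C$ being annihilated by $\overline{\treemod}_\pl$).

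The main obstacle is the lifting step in (i): showing that $\delta_W|_P$ lands in the sub-object $T' P \subset T' W$ rather than merely in $T' W$. This is the precise point where coassociativity of $\delta_W$ couples with the sub-comonad compatibility of Corollary~\ref{corollarydecompositionplanarccooperad}; once this is established, the remaining verifications are formal diagram chases.
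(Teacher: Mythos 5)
Your route is genuinely different from the paper's: the paper's entire proof of this proposition is a citation to the coreflection/adjoint-lifting results of Appendix~A of the companion paper, whereas you give a direct, self-contained verification that the pullback carries a $\overline{\treemod}_{\pl}^{(\leq n)}$-coalgebra structure and satisfies the universal property of $\overline{\mathrm{F}}_n$. That is a legitimate and more informative argument, and your part (ii) is indeed the routine check you describe (naturality of the sub-comonad inclusion gives the cone, and $P \rightarrowtail W$ being monic gives uniqueness).

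The one step that is not yet a proof is the crux of (i), which you yourself flag. Write $T = \overline{\treemod}_{\pl}$ and $T' = \overline{\treemod}_{\pl}^{(\leq n)}$. Coassociativity plus Corollary~\ref{corollarydecompositionplanarccooperad} does show that $z = T'(\delta_W)(\delta_W(p))$, a priori in $T'(TW)$, lies in the subobject $T'(T'W)$. But ``the identity forces every decoration appearing as a leaf of $\delta_W(p)$ to have $\delta_W$-image in $T'W$'' is not a term-by-term consequence: an element of $(TW)^{\otimes k}$ can lie in $(T'W)^{\otimes k}$ without each tensor factor of each of its summands lying in $T'W$, so one cannot read off membership ``decoration by decoration.'' The deduction must go through kernels: for a tree summand $t$ with $k$ nodes, the component $x_t \in W^{\otimes k}$ of $\delta_W(p)$ satisfies $(\delta_W \otimes \cdots \otimes q\,\delta_W \otimes \cdots \otimes \delta_W)(x_t) = 0$ for each slot, where $q : TW \to TW/T'W$ is the quotient map; since $\delta_W$ is split injective (counit axiom), this yields $(\mathrm{id} \otimes \cdots \otimes q\,\delta_W \otimes \cdots \otimes \mathrm{id})(x_t) = 0$, i.e.\ $x_t \in W \otimes \cdots \otimes P \otimes \cdots \otimes W$ for each slot, and over a field the intersection of these subspaces is $P(n_1) \otimes \cdots \otimes P(n_k)$. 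With this repair the lift $\delta_P$ exists and the remainder of your argument goes through; without it, the central claim of (i) is asserted rather than proved.
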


\begin{proof}
This directly follows from the results in \cite[Appendix A]{premierpapier}.
\end{proof}

%-------------------------------------------------

\section{Symmetric operads and cooperads} 
In this section, we construct the composition product of $\mathbb S$-modules, define operads, define cooperads, define the tree monad, define the reduced tree comonad, from their analogue planar notions. We do so using the free-forget adjunction between $\mathbb{N}$-modules and $\mathbb S$-modules and its algebraic properties.

\subsection{$\mathbb S$-modules}
\label{sectioncomositionproduct}
In this subsection, we deal with dg $\mathbb S$-modules. These correspond to collections of dg modules $\{M(n)\}$ for $n \geq 0$, where each $M(n)$ is endowed with an action of $\mathbb{S}_n$. The idea is to study them through the lens of dg $\mathbb{N}$-modules, using the adjunction that relates the two categories.

\begin{definition}[dg $\mathbb S$-module]
    Let $\mathbb S$ be the groupoid whose objects are natural integers and whose morphisms are given by 
    $$
    \hom_{\mathbb S}(n,m) = 
    \begin{cases}
        \emptyset \text{ if }n \neq m~,
        \\
        \mathbb S_n \text{ if }n = m~.
    \end{cases}
    $$
    A \textit{dg} $\mathbb S$\textit{-module} $M$ amounts to the data of a functor 
   	\[
   	M: \mathbb S^\op \longrightarrow \catdgmod{\kk}
   	\]
    from $\mathbb S^\op$ to dg modules. We denote by $\mathsf{dg}~\mathbb{S}\text{-}\mathsf{mod}$ the category of dg $\mathbb{S}$-modules. 
\end{definition}

There is a diagram of adjunctions 
\[
\begin{tikzcd}[column sep=6pc,row sep=2pc]
\mathsf{dg}~\mathbb{S}\text{-}\mathsf{mod}
\arrow[r,"U"{name=B, below}]
&\mathsf{dg}~\mathbb{N}\text{-}\mathsf{mod}~,
\arrow[l,bend right=30,"-\otimes~\mathbb{S}",swap,""{name=A,above}] \arrow[l,bend left=30,"-\otimes~\mathbb{S}",""{name=C,above}] \arrow[phantom, from=B, to=A, "\dashv" rotate=-90] \arrow[phantom, from=C, to=B, "\dashv" rotate=-90]
\end{tikzcd}
\]

between the categories of dg $\mathbb{N}$-modules and of dg $\mathbb{S}$-modules. The functor $- \otimes \mathbb S$ is given by 
\[
(X \otimes \mathbb S)(n) \coloneqq X(n) \otimes \kk[\mathbb S_n]~,
\]
for all $n \geq 0$. We will also denote by $- \otimes \mathbb S$ the endofunctor of dg $\mathbb N$-modules that is given by the (co)free dg $\mathbb S$-module functor composed with the forgetful functor.

\medskip

\subsection{Construction of the composition product} 
We denote by $u\operad A s$ the (graded) $\mathbb{N}$-module given by $u\operad A s(n) \coloneqq \kk$ (in degree $0$) for all $n \geq 0$. It has a obvious planar/non-symmetric operad structure, where all the composition maps are isomorphisms.
    
    \medskip
    
We denote by $u\operad A ss$ be the $\mathbb S$-module freely generated from $u\operad A s$, that is
    $$
    u\operad A ss \coloneqq u\operad A s \otimes \mathbb S~.
    $$
In particular, we have $u\operad A ss(n) =\kk[\mathbb S_n]$ for all natural integers $n \geq 0$.

\begin{lemma}
The underlying $\mathbb N$-module of $u\operad A ss$ has the structure of a planar operad.

\begin{enumerate}
\item The unit is given by the identity map of $\kk$.

\medskip

\item The composition is defined as follows. For every $(\sigma, \sigma_1, \ldots, \sigma_k) \in \mathbb S_k \times \mathbb S_{n_1} \times \cdots \times \mathbb S_{n_k}$ the composition of $\sigma \otimes \sigma_1 \otimes  \cdots \otimes  \sigma_k$ is given by the permutation 
\[
\begin{tikzcd}[column sep=6pc,row sep=2pc]
    \{1, \ldots, n\}
    \ar[d, "\cong"]
    \\
    \{1, \ldots, n_1\} \sqcup \cdots \sqcup \{1, \ldots, n_k\}
    \ar[d, "\sigma_1 \sqcup \cdots \sqcup \sigma_k"]
    \\
    \{1, \ldots, n_1\} \sqcup \cdots \sqcup \{1, \ldots, n_k\}
    \ar[d, "\sigma"]
    \\
    \{1, \ldots, n_{\sigma^{-1}(1)}\} \sqcup \cdots \sqcup \{1, \ldots, n_{\sigma^{-1}(k)}\}
    \ar[d, "\cong"]
    \\
    \{1, \ldots, n\}.
\end{tikzcd}
\]

in $u\operad A ss (n)$, where $n = n_1 + \cdots + n_k$.
\end{enumerate}
\end{lemma}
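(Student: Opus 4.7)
The plan is to verify directly the unit and associativity axioms of a planar operad structure for the composition map described in the statement. Since $u\operad{A}ss(n) = \kk[\mathbb{S}_n]$ is freely spanned by permutations as a $\kk$-module, it suffices to check each axiom on basis elements, and to then extend $\kk$-linearly.

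For unitality, the right unit axiom requires that composing $\sigma \in \mathbb{S}_k$ with $k$ copies of the unit $\mathrm{id} \in \mathbb{S}_1$ returns $\sigma$. With $n_1 = \cdots = n_k = 1$, the intermediate disjoint-union bijection $\sigma_1 \sqcup \cdots \sqcup \sigma_k$ is the identity on $\{1,\ldots,k\}$, so the composite is exactly the block reordering by $\sigma$, which is $\sigma$ itself. The left unit axiom is analogous: with $k=1$ and outer permutation the identity of $\mathbb{S}_1$, both the disjoint-union and block-reordering steps are trivial and the composite reduces to $\sigma_1$.

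For associativity, I would fix $\tau \in \mathbb{S}_\ell$, $\sigma_i \in \mathbb{S}_{k_i}$ for $1 \leq i \leq \ell$, and $\rho_{i,j} \in \mathbb{S}_{n_{i,j}}$ for $1 \leq j \leq k_i$, and set $N = \sum_{i,j} n_{i,j}$. Both iterated composites produce bijections of $\{1,\ldots,N\}$ built from three kinds of elementary moves: identifying a block decomposition with its disjoint union, applying a disjoint union of permutations, and performing a block reordering. Comparing the two composites then reduces to the associativity of composition of bijections of sets, together with the elementary compatibility $(\alpha \sqcup \beta)\circ(\gamma \sqcup \delta) = (\alpha \circ \gamma)\sqcup (\beta \circ \delta)$. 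Equivalently, one may recognise the prescription as the canonical wreath-product map $\mathbb{S}_k \wr (\mathbb{S}_{n_1} \times \cdots \times \mathbb{S}_{n_k}) \to \mathbb{S}_n$, whose compatibility with iterated wreath products is standard.

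The main obstacle is the index bookkeeping in the associativity verification, but no deep argument is required beyond the associativity of composition of bijections; the proof is a mechanical unwinding of the definitions.
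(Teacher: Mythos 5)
Your proof is correct and matches the paper's approach: the paper simply declares the lemma to follow from a straightforward check, and your direct verification of unitality and associativity (via compatibility of disjoint unions with composition, equivalently the wreath-product description) is exactly the check being alluded to.
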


\begin{proof}
    This follows from a straightforward check.
\end{proof}

\begin{proposition}
The endofunctor $-\otimes \mathbb S$ of the category of dg $\mathbb N$-modules has a canonical structure of lax monoidal functor with respect to the planar composition product $\circ_{\pl}$. 

\begin{enumerate}
\item There is a canonical isomorphism $\operad I \cong \operad I \otimes \mathbb S$ given by the identity map,

\medskip

\item and a binatural map of dg $\mathbb{N}$-modules
\[
\varphi_{X,Y}: (X  \otimes \mathbb S) \comp_\pl (Y  \otimes \mathbb S) \longrightarrow (X \comp_\pl Y) \otimes \mathbb S
\]
given by
\[
\begin{tikzcd}[column sep=6pc,row sep=2pc]
    (X(k) \otimes \{\sigma\}) \otimes 
\left( 
(Y(n_1) \otimes \{\sigma_1\})
\otimes \cdots \otimes
(Y(n_k) \otimes \mathbb \{\sigma_k\})
\right)
\ar[d, "\cong"]
\\
X(k) \otimes 
\left( 
Y(n_1) 
\otimes \cdots \otimes
Y(n_k)
\right)
\ar[d, "\id \otimes \sigma"]
\\
X(k) \otimes 
\left( 
Y(n_{\sigma^{-1}(1)}) 
\otimes \cdots \otimes
Y(n_{\sigma^{-1}(k)}) 
\right)
\ar[d, "\cong"]
\\
X(k) \otimes 
\left( 
Y(n_{\sigma^{-1}(1)}) 
\otimes \cdots \otimes
Y(n_{\sigma^{-1}(k)}) 
\right)
\otimes \{\lambda\}
\end{tikzcd}
\]
where $\lambda$ is the composition in $u\operad A ss$ of $\sigma_k \otimes (\sigma_{n_1} \otimes \cdots \otimes \sigma_{n_k})$.
\end{enumerate}
\end{proposition}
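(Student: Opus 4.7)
The plan is to identify $X \otimes \mathbb{S}$ with the arity-wise tensor product of $X$ against the underlying $\mathbb{N}$-module of $u\operad A ss$, and to derive the lax monoidal structure from the planar operad structure on $u\operad A ss$ exhibited in the preceding lemma. Item (1) is then immediate: both $\operad I$ and $\operad I \otimes \mathbb{S}$ are concentrated in arity $1$ with value $\kk$ (using $\kk[\mathbb{S}_1] = \kk$), so one takes the identity of $\kk$ as the coherence morphism.

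For item (2), I would first verify that $\varphi_{X,Y}$ is a well-defined morphism of dg $\mathbb{N}$-modules. The summand of the source indexed by a tuple $(k; n_1, \ldots, n_k)$ with $n_1 + \cdots + n_k = n$ is sent into the summand of the target indexed by $(k; n_{\sigma^{-1}(1)}, \ldots, n_{\sigma^{-1}(k)})$, which is still a valid summand of $(X \comp_\pl Y)(n) \otimes \kk[\mathbb{S}_n]$ since $\sum_i n_{\sigma^{-1}(i)} = n$. Binaturality in $X$ and $Y$ is then immediate from the formula, since $\varphi$ only reshuffles the tensor order of the $Y$-factors without touching their internal content. The two unit axioms, i.e.\ compatibility of $\varphi_{\operad I, Y}$ and $\varphi_{X, \operad I}$ with the isomorphism of (1), reduce to the fact that $1 \in \kk[\mathbb{S}_1]$ is the operadic unit of $u\operad A ss$ and acts trivially in the composition described in the preceding lemma.

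The principal obstacle is the associativity pentagon. Given dg $\mathbb{N}$-modules $X, Y, Z$, one must check that the two composites
\[
((X \otimes \mathbb{S}) \comp_\pl (Y \otimes \mathbb{S})) \comp_\pl (Z \otimes \mathbb{S}) \longrightarrow ((X \comp_\pl Y) \comp_\pl Z) \otimes \mathbb{S}
\]
obtained respectively by first applying $\varphi_{X,Y} \comp_\pl \id$ and then $\varphi_{X \comp_\pl Y, Z}$, versus first applying $\id \comp_\pl \varphi_{Y,Z}$ and then $\varphi_{X, Y \comp_\pl Z}$, agree after the associator of $\comp_\pl$. I would verify this pointwise: fix an arity $n$ and a basis element labelled by a root permutation $\sigma$, middle-level permutations $\sigma_i$, and leaf-level permutations $\tau_{i,j}$, and chase both routes. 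Both produce the same permutation in $\mathbb{S}_n$ precisely by the associativity axiom of the planar operad $u\operad A ss$, i.e.\ the statement that iterated block substitution of permutations is associative. The computation is notationally heavy but conceptually reduces to this single operadic axiom, which is the only substantial ingredient beyond the symmetric monoidal coherence of $\catdgmod{\kk}$.
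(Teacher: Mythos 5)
Your proposal is correct and follows essentially the same route as the paper: the paper's proof also reduces the unit and associativity coherence diagrams for $\varphi$ directly to the unitality and associativity of the composition in the planar operad $u\operad A ss$. Your write-up simply spells out the pointwise permutation chase that the paper leaves implicit.
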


\begin{proof}
This amounts to showing that the following diagrams
$$
\begin{tikzcd}[column sep=2.5pc,row sep=2.5pc]
    (X  \otimes \mathbb S) \comp_\pl (Y  \otimes \mathbb S) \comp_\pl (Z  
    \otimes \mathbb S)
    \ar[r, "\id \circ_\pl \varphi_{Y,Z}"] \ar[d,"\varphi_{X,Y} \circ_\pl \id",swap]
    & (X  \otimes \mathbb S) \comp_\pl ((Y \comp_\pl Z)  \otimes \mathbb S)
    \ar[d,"\varphi_{X,Y \circ_\pl Z}"]
    \\
    ((X \comp_\pl Y)  \otimes \mathbb S) \comp_\pl (Z  \otimes \mathbb S)
    \ar[r,"\varphi_{X \circ_\pl Y,Z}"]
    & (X \comp_\pl Y \comp_\pl Z)  \otimes \mathbb S~,
\end{tikzcd}
$$
and
$$
\begin{tikzcd}[column sep=2.5pc,row sep=2.5pc]
    (X  \otimes \mathbb S) \comp_\pl (\operad I  \otimes \mathbb S)
    \ar[r,"\varphi_{X,\operad I}"] \ar[d]
    & (X \comp_\pl \operad I)  \otimes \mathbb S
    \ar[d]
    \\
    X \otimes \mathbb S
    \ar[r, "\cong"]
    & X \otimes \mathbb S
    \\
    (\operad I  \otimes \mathbb S) \comp_\pl (X \otimes \mathbb S)
    \ar[r,"\varphi_{\operad I,X}"] \ar[u]
    & (\operad I \comp_\pl X)  \otimes \mathbb S
    \ar[u]
\end{tikzcd}
$$
are commutative for every dg $\mathbb N$-modules $X,Y,Z$. This follows from the associativity and the unitality of the compositions in the planar operad $u\operad A ss$.
\end{proof}

\begin{lemma}\label{lemmalaxmonoidalnt}
There are natural transformations
\[
\eta: \id_{\mathbb{N}} \longrightarrow - \otimes \mathbb S
\]

and 
\[
\mu: - \otimes \mathbb S \otimes \mathbb S \longrightarrow - \otimes \mathbb S
\]

of endofunctors in the category of dg $\mathbb N$-modules, induced, respectively, by the units and the multiplications in the group algebras $\kk[\mathbb{S}_n]$ for all $n \geq 0$. Furthermore, these natural transformations are monoidal natural transformations between lax monoidal functors.
\end{lemma}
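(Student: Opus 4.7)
My plan is to define $\eta$ and $\mu$ componentwise using the group-algebra structure of each $\kk[\mathbb{S}_n]$, verify naturality (which is immediate), and then check the two monoidality axioms by chasing generators. At arity $n$, I set $\eta_X : x \mapsto x \otimes e_n$ with $e_n \in \mathbb{S}_n$ the identity, and $\mu_X : x \otimes \sigma \otimes \tau \mapsto x \otimes \sigma\tau$; naturality in $X$ is clear since both maps act as the identity on the $X$-factor.

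To verify that $\eta : \id \Rightarrow -\otimes\mathbb{S}$ is a monoidal natural transformation, I need to check (i) that $\eta_{\operad I}$ coincides with the canonical isomorphism $\operad I \cong \operad I \otimes \mathbb{S}$, which is immediate since $\mathbb{S}_1$ is trivial, and (ii) that $\varphi_{X,Y} \circ (\eta_X \comp_\pl \eta_Y) = \eta_{X \comp_\pl Y}$. Chasing a generator, the only non-trivial point is the $\mathbb{S}$-label produced by $\varphi$: it equals the operadic composition in $u\operad{A}ss$ of the identity permutations $e_k$ and $e_{n_1},\ldots,e_{n_k}$, which is $e_n$ because the unit $\operad I \hookrightarrow u\operad{A}ss$ is a morphism of planar operads; this agrees with the label attached by $\eta_{X \comp_\pl Y}$.

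For $\mu$, the composite functor $-\otimes\mathbb{S}\otimes\mathbb{S}$ inherits the canonical lax monoidal structure of a composition of lax monoidal functors, namely
\[
\varphi^{(2)}_{X,Y} \coloneqq (\varphi_{X,Y} \otimes \mathbb{S}) \circ \varphi_{X \otimes \mathbb{S},\, Y \otimes \mathbb{S}}~,
\]
and monoidality of $\mu$ amounts to the identity $\varphi_{X,Y} \circ (\mu_X \comp_\pl \mu_Y) = \mu_{X \comp_\pl Y} \circ \varphi^{(2)}_{X,Y}$. Evaluating both sides on a generator $(x \otimes \alpha \otimes \alpha') \otimes \bigotimes_{i=1}^{k} (y_i \otimes \beta_i \otimes \beta'_i)$, both paths reorder the $y_i$'s by the same permutation $\alpha\alpha'$, and the equality of the resulting $\mathbb{S}$-labels boils down to the identity
\[
\gamma_{u\operad{A}ss}\bigl(\alpha\alpha';\, \beta_1 \beta'_1,\ldots,\beta_k \beta'_k\bigr) = \gamma_{u\operad{A}ss}\bigl(\alpha;\, \beta_{(\alpha')^{-1}(1)},\ldots,\beta_{(\alpha')^{-1}(k)}\bigr) \cdot \gamma_{u\operad{A}ss}\bigl(\alpha';\, \beta'_1,\ldots,\beta'_k\bigr)
\]
in $\mathbb{S}_n$. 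This is the classical block-composition formula for permutations, expressing the fact that $u\operad{A}ss$ is an operad internal to the category of groups; it follows by direct inspection of the explicit composition in $u\operad{A}ss$ described above.

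The main obstacle is precisely this last identity --- the compatibility between operadic composition and group multiplication in $u\operad{A}ss$. Everything else is routine bookkeeping of tensor factors and permutations.
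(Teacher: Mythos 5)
Your proposal is correct and follows essentially the same route as the paper: the paper reduces the monoidality of $\eta$ and $\mu$ to the same two diagrams you write down and then invokes ``the compatibility of the compositions in the planar operad $u\operad{A}ss$ with the actions of the symmetric groups,'' which is precisely the block-composition identity you state and which does hold (it is the wreath-product compatibility of $\gamma_{u\operad{A}ss}$ with group multiplication, twisted by the reindexing $\beta_{(\alpha')^{-1}(i)}$ exactly as you record it). Your write-up is simply a more explicit version of the paper's argument.
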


\begin{proof}
The behaviour of the monad $- \otimes \mathbb S$ with respect to the unit $\operad I$ is straightforward to check. We check that the two following diagrams 
\[
    \begin{tikzcd}[column sep=3pc,row sep=3.5pc]
        (X \otimes \mathbb S \otimes \mathbb S)
        \comp_\pl (Y \otimes \mathbb S \otimes \mathbb S)
        \ar[r,"\varphi_{X \otimes \mathbb S,Y \otimes \mathbb S}"] \ar[d,"\mu_X \circ_\pl \mu_Y",swap]
        & ((X \otimes \mathbb S )
        \comp_\pl (Y \otimes \mathbb S )) \otimes \mathbb S
        \ar[r,"\varphi_{X,Y} \otimes \mathbb S"]
        & (X \comp_\pl Y) \otimes \mathbb S \otimes \mathbb S
        \ar[d,"\mu_{X \comp_\pl Y}"]
        \\
        (X \otimes \mathbb S )
        \comp_\pl (Y \otimes \mathbb S )
        \ar[rr,"\varphi_{X,Y}"]
        && 
        (X \comp_\pl Y) \otimes \mathbb S
    \end{tikzcd}
\]
and 
\[
    \begin{tikzcd}[column sep=2.5pc,row sep=2.5pc]
        (X \otimes \mathbb S ) \comp_\pl (Y \otimes \mathbb S )
        \ar[r,"\varphi_{X,Y}"]
        & (X \comp_\pl Y) \otimes \mathbb S \\
        X \comp_\pl Y \arrow[u,"\eta_X \circ_\pl \eta_Y"] \arrow[ru,"\eta_{X \comp_\pl Y}",swap]~,
        	&
    \end{tikzcd}
\]
are commutative for all dg $\mathbb{N}$-modules $X$ and $Y$. It follows from the compatibility of the compositions in the planar operad $u\operad A ss$ with the actions of the symmetric groups. 
\end{proof}

\begin{remark}
There is a natural transformation 
\[
\pi: \mathrm{U}(-) \otimes \mathbb S \longrightarrow \id_{\mathbb{S}}~,
\]
where $\mathrm{U}$ is the forgetful functor from dg $\mathbb{S}$-modules to dg $\mathbb{N}$-modules. For $M$ a dg $\mathbb S$-modules, $\pi_M: \mathrm{U}(M) \otimes \mathbb{S} \longrightarrow M$ is given by the dg $\mathbb{S}$-module structure of $M$. 
\end{remark}

\begin{definition}[Composition product]\label{definitioncomp}
Let $M, N$ be two dg $\mathbb S$-modules. We define the \textit{composition product} $M \comp N$ as the coequaliser of the reflexive pair of maps

\[
\begin{tikzcd}[column sep=5pc,row sep=4pc]
M \comp N \coloneqq \mathrm{Coeq}\Bigg(\displaystyle ((\mathrm{U}(M) \otimes \mathbb S) \comp_\pl (\mathrm{U}(N) \otimes \mathbb S))\otimes \mathbb S \arrow[r,"\mu_{\mathrm{U}(M) \comp_\pl \mathrm{U}(N)}~\varphi_{\mathrm{U}(M),\mathrm{U}(N)} \otimes \mathbb S",shift right=1.1ex,swap]  \arrow[r,"(\pi_M \comp_\pl \pi_N) \otimes \mathbb{S}"{name=SD},shift left=1.1ex ]
&( \mathrm{U}(M) \comp_\pl  \mathrm{U}(N)) \otimes \mathbb S\Bigg)~,
\end{tikzcd}
\]
\vspace{0.1pc}
    
in the category of dg $\mathbb S$-modules.
\end{definition}

For any dg $\mathbb S$-modules $M, N$, there is, by definition, a natural map 
\[
\rho_{M,N}: (\mathrm{U}(M) \comp_\pl  \mathrm{U}(N)) \otimes \mathbb S \longrightarrow M \circ N~,
\]
given by the universal property of the coequalizer. 

\begin{proposition}\label{lemmacompositioniso}
For every two dg $\mathbb N$-modules $X,Y$, the natural map 
\[
\begin{tikzcd}[column sep=3.5pc,row sep=4pc]
\psi_{X,Y}: (X \comp_\pl Y) \otimes \mathbb S \arrow[r,"(\eta_X \comp_\pl \eta_Y) \otimes \mathbb{S}"]
&(X \otimes \mathbb S \comp_\pl Y \otimes \mathbb S) \otimes \mathbb S \arrow[r,"\rho_{X \otimes \mathbb S ,Y \otimes \mathbb S}"]
&(X \otimes \mathbb S) \comp (Y\otimes \mathbb S)
\end{tikzcd}
\]
is an isomorphism of dg $\mathbb S$-modules. 
\end{proposition}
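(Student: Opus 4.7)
The plan is to exhibit the defining reflexive coequaliser for $(X \otimes \mathbb S) \comp (Y \otimes \mathbb S)$ as a split coequaliser whose value is $(X \comp_\pl Y) \otimes \mathbb S$, and then to identify $\psi_{X,Y}$ as the inverse of the resulting comparison isomorphism.

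First, I would introduce the candidate cofork
\[
\Psi_{X,Y} \coloneqq \mu_{X \comp_\pl Y} \circ (\varphi_{X,Y} \otimes \mathbb S) \colon ((X \otimes \mathbb S) \comp_\pl (Y \otimes \mathbb S)) \otimes \mathbb S \longrightarrow (X \comp_\pl Y) \otimes \mathbb S
\]
and verify that it coequalises the reflexive pair of Definition \ref{definitioncomp} applied to $M = X \otimes \mathbb S$, $N = Y \otimes \mathbb S$. Along one composite this reduces to the hexagon of Lemma \ref{lemmalaxmonoidalnt} expressing the monoidality of $\mu$ with respect to $\varphi$; along the other it reduces to the associativity of $\mu$ as the multiplication of the monad $- \otimes \mathbb S$. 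Both composites then coincide, becoming a common map out of $((X \otimes \mathbb S^{\otimes 2}) \comp_\pl (Y \otimes \mathbb S^{\otimes 2})) \otimes \mathbb S$.

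Next, I would exhibit the explicit section
\[
\tilde s \coloneqq ((\eta_X \otimes \mathbb S) \comp_\pl (\eta_Y \otimes \mathbb S)) \otimes \mathbb S \colon ((X \otimes \mathbb S) \comp_\pl (Y \otimes \mathbb S)) \otimes \mathbb S \longrightarrow ((X \otimes \mathbb S^{\otimes 2}) \comp_\pl (Y \otimes \mathbb S^{\otimes 2})) \otimes \mathbb S
\]
obtained by inserting monad units into the innermost $\mathbb S$-slots, and check the split coequaliser axioms. The composite $((\pi_{X \otimes \mathbb S} \comp_\pl \pi_{Y \otimes \mathbb S}) \otimes \mathbb S) \circ \tilde s$ is the identity by the monad unit axiom $\mu \circ (\eta \otimes \mathbb S) = \id$, while the other leg composed with $\tilde s$ equals $((\eta_X \comp_\pl \eta_Y) \otimes \mathbb S) \circ \Psi_{X,Y}$, thanks to the unit triangle $\varphi_{A,B} \circ (\eta_A \comp_\pl \eta_B) = \eta_{A \comp_\pl B}$ of Lemma \ref{lemmalaxmonoidalnt} applied at $(A,B) = (X \otimes \mathbb S, Y \otimes \mathbb S)$. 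These equalities are precisely the split coequaliser axioms, so $\Psi_{X,Y}$ realises $(X \comp_\pl Y) \otimes \mathbb S$ as an absolute coequaliser of the same reflexive pair that defines $(X \otimes \mathbb S) \comp (Y \otimes \mathbb S)$.

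By uniqueness of coequalisers there is then a unique isomorphism $\tilde\Psi_{X,Y} \colon (X \otimes \mathbb S) \comp (Y \otimes \mathbb S) \xrightarrow{\cong} (X \comp_\pl Y) \otimes \mathbb S$ with $\Psi_{X,Y} = \tilde\Psi_{X,Y} \circ \rho_{X \otimes \mathbb S, Y \otimes \mathbb S}$. Unfolding the definitions, $\tilde\Psi_{X,Y} \circ \psi_{X,Y}$ equals $\Psi_{X,Y} \circ ((\eta_X \comp_\pl \eta_Y) \otimes \mathbb S) = \mu_{X \comp_\pl Y} \circ (\eta_{X \comp_\pl Y} \otimes \mathbb S) = \id$, by the same two ingredients. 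Hence $\psi_{X,Y}$ is the inverse of $\tilde\Psi_{X,Y}$, and in particular an isomorphism of dg $\mathbb S$-modules. The main technical obstacle throughout is bookkeeping: several copies of $\mathbb S$ coexist at each stage, and one must carefully track which are being multiplied, inserted, or permuted; the two compatibility diagrams of Lemma \ref{lemmalaxmonoidalnt} together with the monad axioms supply exactly the identifications needed.
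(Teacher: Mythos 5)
Your argument is correct and rests on the same two pillars as the paper's proof: the candidate inverse $\mu_{X \comp_\pl Y} \circ (\varphi_{X,Y} \otimes \mathbb S)$ and the compatibility diagrams of Lemma \ref{lemmalaxmonoidalnt}. The difference is organizational. The paper checks directly that both composites of $\psi_{X,Y}$ with the factored map are identities, handling the harder direction by precomposing with the epimorphism $\rho_{X \otimes \mathbb S, Y \otimes \mathbb S}$ and reducing to the coequalizer relation; you instead exhibit an explicit splitting of the defining reflexive pair and invoke absoluteness of split coequalizers, so that invertibility of the comparison map $\tilde{\Psi}_{X,Y}$ comes for free and only the single composite $\tilde{\Psi}_{X,Y} \circ \psi_{X,Y} = \id$ needs to be computed. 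That is a clean and arguably more transparent repackaging. One small correction: the identity $d_1 \circ \tilde s = \bigl((\eta_X \comp_\pl \eta_Y) \otimes \mathbb S\bigr) \circ \Psi_{X,Y}$ does not follow from the unit triangle of Lemma \ref{lemmalaxmonoidalnt} at $(X \otimes \mathbb S, Y \otimes \mathbb S)$ — that triangle involves $\eta_{X \otimes \mathbb S}$, whereas your section inserts $\eta_X \otimes \mathbb S$, the image of $\eta_X$ under the functor $- \otimes \mathbb S$, and these are different maps $X \otimes \mathbb S \to X \otimes \mathbb S \otimes \mathbb S$. The equation you want is instead a consequence of the binaturality of $\varphi$ applied to $\eta_X$ and $\eta_Y$ together with the naturality of $\mu$; with that justification substituted, all three split-fork axioms hold (the axiom $d_0 \circ \tilde s = \id$ uses the unit law $\mu_X \circ (\eta_X \otimes \mathbb S) = \id$, and $\Psi_{X,Y} \circ s = \id$ uses the unit triangle at $(X,Y)$), and the argument goes through.
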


\begin{proof}
    Let us consider the composite map  
    \[
    \mu_{X \comp_\pl Y}~(\varphi_{X,Y} \otimes \mathbb S): ((X \otimes \mathbb S) \comp_\pl (Y\otimes \mathbb S)) \otimes \mathbb S
    \longrightarrow (X \comp_\pl Y) \otimes \mathbb S \otimes \mathbb S
    \longrightarrow (X \comp_\pl Y) \otimes \mathbb S~.
    \]
    Using Lemma \ref{lemmalaxmonoidalnt} in a straightforward way, one can prove that it factors through $(X \otimes \mathbb S) \comp (Y\otimes \mathbb S)$ and that the composite map
    \[
    \mu_{X \comp_\pl Y}~(\varphi_{X,Y} \otimes \mathbb S)~\psi_{X,Y}: (X \comp_\pl Y) \otimes \mathbb S
    \longrightarrow (X \otimes \mathbb S) \comp (Y\otimes \mathbb S)
    \longrightarrow (X \comp_\pl Y) \otimes \mathbb S
    \]
    is the identity. To prove that the composite map
    \[
    \psi_{X,Y}~\mu_{X \comp_\pl Y}~(\varphi_{X,Y} \otimes \mathbb S):(X \otimes \mathbb S) \comp (Y\otimes \mathbb S)
    \longrightarrow (X \comp_\pl Y) \otimes \mathbb S
    \longrightarrow (X \otimes \mathbb S) \comp (Y\otimes \mathbb S)
    \]
    is the identity, it is enough to prove that the composition
    \[
    ((X \otimes \mathbb S) \comp_\pl (Y\otimes \mathbb S)) \otimes \mathbb S
    \longrightarrow
    (X \otimes \mathbb S) \comp (Y\otimes \mathbb S)
    \longrightarrow (X \comp_\pl Y) \otimes \mathbb S
    \longrightarrow (X \otimes \mathbb S) \comp (Y\otimes \mathbb S)
    \]
    given by $\psi_{X,Y}~\mu_{X \comp_\pl Y}~(\varphi_{X,Y} \otimes \mathbb S)~\rho_{X \otimes \mathbb S ,Y \otimes \mathbb S}$ 
    is equal to the canonical quotient map $\rho_{X \otimes \mathbb S ,Y \otimes \mathbb S}$. Using 
    Lemma \ref{lemmalaxmonoidalnt} again, we can rewrite this composition as:
   	\[
   	\begin{tikzcd}
   	((X \otimes \mathbb S) \comp_\pl (Y\otimes \mathbb S)) \otimes \mathbb S \arrow[d,"(\eta_{X \otimes \mathbb S} \comp_\pl \eta_{Y \otimes \mathbb S}) \otimes \mathbb{S}"] \\
   	((X \otimes \mathbb S \otimes \mathbb S) \comp_\pl (Y\otimes \mathbb S \otimes \mathbb S)) \otimes \mathbb S \arrow[d,"\mu_{(X \otimes \mathbb{S})\comp_\pl (Y \otimes \mathbb{S})}~(\varphi_{X \otimes \mathbb{S},Y \otimes \mathbb{S}} \otimes \mathbb{S})"] \\
   	((X \otimes \mathbb S) \comp_\pl (Y\otimes \mathbb S)) \otimes \mathbb S \arrow[d,"\rho_{X \otimes \mathbb S ,Y \otimes \mathbb S}"]\\
    (X \otimes \mathbb S) \comp (Y\otimes \mathbb S)~.
   	\end{tikzcd}
   	\]
The universal property that defines the composition product $- \comp -$ allows us to change the second map $\mu_{(X \otimes \mathbb{S})\comp_\pl (Y \otimes \mathbb{S})} ~(\varphi_{X \otimes \mathbb{S},Y \otimes \mathbb{S}} \otimes \mathbb{S})$ by $(\mu_X \comp_\pl \mu_Y ) \otimes \mathbb{S}$ without modifying the whole composition. Then, the composition of the two first arrows is the identity, which implies that the whole composition is equal to its last component $\rho_{X \otimes \mathbb S ,Y \otimes \mathbb S}$, which concludes the proof.
\end{proof}

\begin{theorem}\label{prop: equal to the usual composition product}
Let $M, N$ be two dg $\mathbb S$-modules. There is a natural isomorphism

\[
M \comp N(n) \cong \bigoplus_{k\geq 0} M(k) \otimes_{\mathbb{S}_k} \left( \bigoplus_{i_1 + \cdots + i_k = n} \mathrm{Ind}_{\mathbb{S}_{i_1} \times \cdots \times \mathbb{S}_{i_k}}^{\mathbb{S}_n} (N(i_1) \otimes \cdots \otimes N(i_k))\right)
\]
\vspace{0.1pc}

of dg $\mathbb S$-modules, where $\mathrm{Ind}$ stands for the induced representation.
\end{theorem}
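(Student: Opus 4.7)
The plan is to compute the coequalizer defining $M \comp N$ in arity $n$ explicitly, using the formulas for the functor $- \otimes \mathbb{S}$, the planar composition product $\comp_\pl$, and the lax monoidal structure map $\varphi$ introduced before Lemma \ref{lemmalaxmonoidalnt}. First, I would unpack the target of the coequalizer in arity $n$:
\[
(\mathrm{U}(M) \comp_\pl \mathrm{U}(N))(n) \otimes \kk[\mathbb{S}_n] = \bigoplus_{k \geq 0}\ \bigoplus_{i_1 + \cdots + i_k = n} M(k) \otimes N(i_1) \otimes \cdots \otimes N(i_k) \otimes \kk[\mathbb{S}_n],
\]
and similarly unpack the source, which on the $(k, i_1, \ldots, i_k)$-summand reads $M(k) \otimes \kk[\mathbb{S}_k] \otimes N(i_1) \otimes \kk[\mathbb{S}_{i_1}] \otimes \cdots \otimes N(i_k) \otimes \kk[\mathbb{S}_{i_k}] \otimes \kk[\mathbb{S}_n]$.

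Second, I would identify the two parallel maps on these summands. The map $(\pi_M \comp_\pl \pi_N) \otimes \mathbb{S}$ uses the $\mathbb{S}$-module structures to absorb $\kk[\mathbb{S}_k]$ into $M(k)$ and each $\kk[\mathbb{S}_{i_j}]$ into $N(i_j)$, while leaving the trailing $\kk[\mathbb{S}_n]$ untouched. The other map, $\mu_{\mathrm{U}(M) \comp_\pl \mathrm{U}(N)} \circ (\varphi_{\mathrm{U}(M), \mathrm{U}(N)} \otimes \mathbb{S})$, uses the explicit formulas for $\varphi$ and $\mu$ to push the factors $\kk[\mathbb{S}_k]$ and the $\kk[\mathbb{S}_{i_j}]$ into the trailing $\kk[\mathbb{S}_n]$ via the composition in the planar operad $u\operad{A}ss$. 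By the very definition of this composition, the element of $\mathbb{S}_n$ produced is precisely the block-permutation combination of $\sigma \in \mathbb{S}_k$ with the $\tau_j \in \mathbb{S}_{i_j}$, and the order of the tensor factors $N(i_j)$ is permuted accordingly by $\sigma$.

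Third, comparing the two maps, the coequalizer is the quotient of the target summand by two families of relations: for every $\sigma \in \mathbb{S}_k$, the identification of $(m \cdot \sigma) \otimes (n_1 \otimes \cdots \otimes n_k) \otimes s$ with $m \otimes (n_{\sigma^{-1}(1)} \otimes \cdots \otimes n_{\sigma^{-1}(k)}) \otimes (\bar{\sigma} \cdot s)$, where $\bar{\sigma} \in \mathbb{S}_n$ is the block permutation lifting $\sigma$; and, for every $\tau_j \in \mathbb{S}_{i_j}$, an analogous identification with the image of $\tau_j$ under the natural inclusion $\mathbb{S}_{i_j} \hookrightarrow \mathbb{S}_n$. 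These are exactly the defining relations of the tensor products over $\mathbb{S}_k$ and over $\mathbb{S}_{i_1} \times \cdots \times \mathbb{S}_{i_k}$, so the coequalizer identifies with
\[
\bigoplus_{k \geq 0} M(k) \otimes_{\mathbb{S}_k} \Bigl( \bigoplus_{i_1 + \cdots + i_k = n} \bigl( N(i_1) \otimes \cdots \otimes N(i_k) \bigr) \otimes_{\mathbb{S}_{i_1} \times \cdots \times \mathbb{S}_{i_k}} \kk[\mathbb{S}_n] \Bigr),
\]
which, using $\mathrm{Ind}_H^G(V) = V \otimes_H \kk[G]$, is the claimed formula.

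The main obstacle is the careful bookkeeping of permutations: one must check that the element of $\mathbb{S}_n$ produced by $\varphi$ followed by $\mu$ matches exactly the standard block-permutation embedding $\mathbb{S}_k \wr (\mathbb{S}_{i_1} \times \cdots \times \mathbb{S}_{i_k}) \hookrightarrow \mathbb{S}_n$ underlying the classical composition formula, and that the reindexing of the $N(i_j)$ factors by $\sigma^{-1}$ is consistent with the convention used in the induced-representation side. Both points, however, are built into the explicit formulas for $\varphi$ and for the composition in $u\operad{A}ss$ given earlier in the section.
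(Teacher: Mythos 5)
Your proof is correct, but it takes a genuinely different route from the paper. The paper's own proof is a two-line formal argument: by Proposition~\ref{lemmacompositioniso} the two sides agree on free dg $\mathbb{S}$-modules $X \otimes \mathbb{S}$, both sides preserve sifted colimits, and every dg $\mathbb{S}$-module is a reflexive coequalizer of free ones, so the isomorphism extends to all $M, N$. You instead compute the defining coequalizer of Definition~\ref{definitioncomp} directly in arity $n$ for arbitrary $M, N$, identifying the image of the difference of the two parallel maps with the relations presenting the iterated relative tensor product $M(k) \otimes_{\mathbb{S}_k} \bigl( (N(i_1) \otimes \cdots \otimes N(i_k)) \otimes_{\mathbb{S}_{i_1} \times \cdots \times \mathbb{S}_{i_k}} \kk[\mathbb{S}_n] \bigr)$. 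Your approach is more self-contained and exhibits the isomorphism explicitly, at the cost of the permutation bookkeeping you acknowledge: in particular, to see that the relations with $\sigma$ and the $\tau_j$ simultaneously nontrivial lie in the span of your two special families, one uses the block-permutation identity $(\tau_{\sigma^{-1}(1)} \times \cdots \times \tau_{\sigma^{-1}(k)}) \cdot \bar{\sigma} = \bar{\sigma} \cdot (\tau_1 \times \cdots \times \tau_k)$, which is exactly the associativity of composition in $u\operad{A}ss$. The paper's argument avoids this entirely, but it silently requires the complementary check that the classical induced-representation formula, evaluated on free $\mathbb{S}$-modules, returns $(X \comp_\pl Y) \otimes \mathbb{S}$ and commutes with sifted colimits --- a computation of roughly the same flavour as the one you carry out. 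Both proofs are valid; yours is the more elementary and explicit of the two.
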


\begin{proof}
By Proposition \ref{lemmacompositioniso}, both expressions are isomorphic for free dg $\mathbb{S}$-modules. Furthermore, both expressions commute with sifted colimits. Hence both expressions are isomorphic for any pair of dg $\mathbb{S}$-modules.
\end{proof}

\begin{remark}
Therefore Definition \ref{definitioncomp} coincides with the standard definition of the composition product of dg $\mathbb{S}$-modules present in the literature. See for instance \cite{LodayVallette}. 
\end{remark}

\begin{proposition}
The composition product $\comp$ defines the structure of a monoidal category on dg $\mathbb S$-module whose unit is given by $\operad I$.
\end{proposition}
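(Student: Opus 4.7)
The plan is to transport the monoidal structure of $(\catdgmod{\mathbb{N}}, \comp_\pl, \operad{I})$ to the category of dg $\mathbb{S}$-modules along the free functor $- \otimes \mathbb{S}$. I would first build the associator and unitors on free dg $\mathbb{S}$-modules using Proposition \ref{lemmacompositioniso}, then extend to arbitrary dg $\mathbb{S}$-modules by sifted colimits, and finally check the coherence axioms.

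On free objects, the natural isomorphism $\psi_{X,Y} \colon (X \comp_\pl Y) \otimes \mathbb{S} \cong (X \otimes \mathbb{S}) \comp (Y \otimes \mathbb{S})$ from Proposition \ref{lemmacompositioniso} does essentially all the work. Since $\operad{I} \cong \operad{I} \otimes \mathbb{S}$ is itself a free $\mathbb{S}$-module, $\psi$ immediately yields unitors $\operad{I} \comp (X \otimes \mathbb{S}) \cong (\operad{I} \comp_\pl X) \otimes \mathbb{S} \cong X \otimes \mathbb{S}$, and symmetrically on the right. Iterating $\psi$ provides two isomorphisms
\[
((X \otimes \mathbb{S}) \comp (Y \otimes \mathbb{S})) \comp (Z \otimes \mathbb{S}) \cong (X \comp_\pl Y \comp_\pl Z) \otimes \mathbb{S} \cong (X \otimes \mathbb{S}) \comp ((Y \otimes \mathbb{S}) \comp (Z \otimes \mathbb{S})),
\]
so that the planar associator for $\comp_\pl$ transports to a free-level associator for $\comp$.

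To pass to arbitrary dg $\mathbb{S}$-modules, I would use that every such $M$ admits the canonical reflexive presentation $\mathrm{U}(M) \otimes \mathbb{S} \otimes \mathbb{S} \rightrightarrows \mathrm{U}(M) \otimes \mathbb{S} \twoheadrightarrow M$ coming from the monad $- \otimes \mathbb{S}$. Granting that $\comp$ preserves sifted colimits (in particular reflexive coequalizers) in each variable, the associator and unitors on free objects extend uniquely and naturally to all dg $\mathbb{S}$-modules. The pentagon and triangle axioms then hold on free objects by reduction, via $\psi$, to the planar coherences of $(\catdgmod{\mathbb{N}}, \comp_\pl, \operad{I})$, and propagate to arbitrary objects by the same colimit-preservation argument.

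The hard part will be justifying that $\comp$ preserves sifted colimits in each variable. This should follow from the definition of $\comp$ as a coequalizer of left-adjoint expressions (Definition \ref{definitioncomp}), the fact that $\mathrm{U}$ preserves colimits (being a left adjoint by the adjunction diagram in Section \ref{sectioncomositionproduct}), and that the planar composition $\comp_\pl$ preserves sifted colimits in each variable, itself a consequence of sifted-colimit-preservation of tensor powers. All of these commutations of colimits are formally standard, but some care will be needed in interchanging the coequalizer of Definition \ref{definitioncomp} with the sifted colimits in $M$, $N$, and $P$ while keeping track of the lax monoidal structure $\varphi$ of $- \otimes \mathbb{S}$.
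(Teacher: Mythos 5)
Your proposal is correct and follows essentially the same route as the paper: construct the associator and unitors on free dg $\mathbb{S}$-modules via the isomorphism $\psi$ of Proposition \ref{lemmacompositioniso}, check compatibility with the monad structure $\mu$ of $-\otimes\mathbb{S}$ so that the data descends along the canonical reflexive presentation, extend to arbitrary dg $\mathbb{S}$-modules by sifted-colimit preservation, and reduce the coherence axioms to those of $\comp_\pl$. The only difference is one of emphasis: you flag the sifted-colimit preservation of $\comp$ in each variable as the delicate point and sketch its justification, whereas the paper asserts it without further comment.
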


\begin{proof}
Let $X,Y,Z$ be three dg $\mathbb{N}$-modules. One can construct an associator isomorphism
\[
\begin{tikzcd}
\alpha_{X,Y,Z}: ((X \otimes \mathbb S) \comp (Y \otimes \mathbb S)) \comp (Z \otimes \mathbb S) 
\ar[r, "\cong"]
&(X \otimes \mathbb S) \comp ((Y \otimes \mathbb S) \comp (Z \otimes \mathbb S))~,
\end{tikzcd}
\]
using the binatural isomorphism $\psi$ of Proposition \ref{lemmacompositioniso} and the associator isomorphism of the composition product $\comp_\pl$ in the category of dg $\mathbb{N}$-modules. Furthermore, this associator isomorphism commutes with the natural transformation $\mu: - \otimes \mathbb S \otimes \mathbb S \longrightarrow - \otimes \mathbb S$, hence it induces an associator isomorphism for all free dg $\mathbb{S}$-modules. Using the compatibility of these constructions with sifted colimits, it induces a associator isomorphism for any three dg $\mathbb{S}$-modules. Likewise for the unitor isomorphisms. The commutativity of the required diagrams then follows from the commutativity of the unitors and the associators of the planar composition product $- \comp_\pl -$.
\end{proof}

\begin{proposition}
The functor $-\otimes \mathbb S$ from dg $\mathbb N$-modules to dg $\mathbb S$-modules is strong monoidal.
\end{proposition}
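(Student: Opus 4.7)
The plan is to use the binatural isomorphism $\psi_{X,Y}$ constructed in Proposition \ref{lemmacompositioniso} as the tensorator witnessing strong monoidality, and to exhibit an obvious unit isomorphism, then to verify that the coherence diagrams reduce to data already established.

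First, for the unit I would observe that $\operad{I}(n) = 0$ for $n \neq 1$ and $\operad{I}(1) = \kk$; since $\mathbb{S}_1$ is trivial, the canonical map $\operad{I} \to \operad{I} \otimes \mathbb{S}$ given by $\eta_{\operad I}$ is an isomorphism of dg $\mathbb{S}$-modules (which coincides with the identity map of $\kk$ in arity $1$). This handles the unit part of the strong monoidal structure.

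Next, for the tensorator, Proposition \ref{lemmacompositioniso} already provides the natural isomorphism
\[
\psi_{X,Y}: (X \comp_\pl Y) \otimes \mathbb{S} \xrightarrow{\,\cong\,} (X \otimes \mathbb{S}) \comp (Y \otimes \mathbb{S})
\]
of dg $\mathbb{S}$-modules. I would take its inverse as the structural isomorphism $F(X) \comp F(Y) \xrightarrow{\cong} F(X \comp_\pl Y)$, where $F = -\otimes \mathbb{S}$.

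Finally, I would verify the coherence axioms. The associativity pentagon demands that the two composites
\[
(X \otimes \mathbb{S}) \comp (Y \otimes \mathbb{S}) \comp (Z \otimes \mathbb{S}) \longrightarrow (X \comp_\pl Y \comp_\pl Z) \otimes \mathbb{S}
\]
coming from either bracketing coincide; however, the associator of $\comp$ on free dg $\mathbb{S}$-modules was \emph{defined} in the preceding proposition by transport along $\psi$ from the associator of $\comp_\pl$ on dg $\mathbb{N}$-modules, so this diagram commutes by construction. The two unit diagrams follow similarly from the unit isomorphisms for $\comp_\pl$ together with the normalisation $\varphi_{X,\operad I} = \varphi_{\operad I, X} = \id$, which in turn comes from $\eta_{\operad I}$ being the identity.

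The main (minor) obstacle is purely bookkeeping: tracking that the tensorator $\psi_{X,Y}$, which was defined as a composite of $(\eta_X \comp_\pl \eta_Y) \otimes \mathbb{S}$ with the quotient map $\rho$, is compatible with the associators and unitors that were constructed using precisely the same data. No new computation is required beyond unpacking the definitions made in Definition \ref{definitioncomp} and the previous proposition.
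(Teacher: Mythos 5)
Your proposal is correct and follows essentially the same route as the paper: the unit isomorphism $\operad I \cong \operad I \otimes \mathbb S$ together with the binatural isomorphism $\psi_{X,Y}$ of Proposition~\ref{lemmacompositioniso} furnish the strong monoidal structure, and the coherence diagrams commute because the associator and unitors of $\comp$ were themselves defined by transport along $\psi$ from those of $\comp_\pl$. No discrepancy to report.
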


\begin{proof}
This structure of a strong monoidal functor is given by the identity map $\operad I \longrightarrow \operad I \otimes \mathbb S$ and the binatural map $\psi$ of Proposition \ref{lemmacompositioniso}. The fact that these maps define the structure of a strong monoidal functor is a direct consequence of the fact that they are used in the definition of the associator and the unitors related to the composition product.
\end{proof}

\subsection{Operads and cooperads}

\begin{definition}[dg operad]
A \textit{dg operad} $\operad {P}$ amounts to the data of a monoid $(\mathcal{P},\gamma,\eta)$ in the category of dg $\mathbb N$-modules with respect to the composition product. 
\end{definition}

\begin{definition}[Augmented dg operad]
An \textit{augmented dg operad} $\operad P$ amounts to the data of a dg operad $(\mathcal{P},\gamma,\eta)$ equipped with a morphism of dg operads $\nu: \operad P \longrightarrow \operad I$ such that $\nu \circ \eta = \mathrm{id}.$
\end{definition}

Given an augmented planar dg operad $\operad P$, we will denote by $\overline{\operad P}$ the kernel of the augmentation map. 

\begin{definition}[dg cooperad]
A \textit{dg cooperad} $\operad C$ amounts to the data of a comonoid $(\C, \Delta, \epsilon)$ in the category of dg $\mathbb S$-modules with respect to the composition product. 
\end{definition}

Given a dg cooperad $\C$, we will denote by $\overline{\operad C}$ the kernel of the counit map. 

\begin{definition}[Coaugmented dg cooperad]
A \textit{coaugmented dg cooperad} $\operad C$ amounts to the data of a dg cooperad $(\C, \Delta, \epsilon)$ equipped together with a morphism of planar dg cooperads $\mu: \operad I \longrightarrow \operad C$ such that $\epsilon \circ \mu = \mathrm{id}$. 
\end{definition}

The strong monoidal structure on the functor $-\otimes \mathbb S$ yields two adjunctions 

\[
\begin{tikzcd}[column sep=5pc,row sep=3pc]
          \dgoperads_\pl  \arrow[r, shift left=1.1ex, "-\otimes \mathbb S"{name=F}] &\dgoperads , \arrow[l, shift left=.75ex, "\mathrm{U}_{\mathbb S}"{name=U}]
            \arrow[phantom, from=F, to=U, , "\dashv" rotate=-90]
\end{tikzcd}
\]
\[
\begin{tikzcd}[column sep=5pc,row sep=3pc]
          \dgcooperads_\pl  \arrow[r, shift left=1.1ex, "-\otimes \mathbb S"{name=F}] &\dgcooperads, \arrow[l, shift left=.75ex, "\mathrm{U}_{\mathbb S}"{name=U}]
            \arrow[phantom, from=F, to=U, , "\dashv" rotate=90]
\end{tikzcd}
\]

that lift, respectively, the adjunction $-\otimes \mathbb S \dashv U_{\mathbb S}$ and the adjunction $U_{\mathbb S} \dashv -\otimes \mathbb S$ that relate dg $\mathbb S$-modules to dg $\mathbb N$-modules.

\medskip

\begin{remark}
The adjunction $- \otimes \mathbb S \dashv U_{\mathbb S}$ relating dg operads to planar dg operads is monadic since its right adjoint preserves coreflexive equalisers and is conservative. However the other adjunction $U_{\mathbb S} \dashv - \otimes \mathbb S \dashv $ is not a priori comonadic.
\end{remark}

%--------------------------------------------------

\section{The tree monad}

\subsection{The tree intertwining map}
Let us consider the following sequence of adjunctions

\[
\begin{tikzcd}[column sep=3.5pc,row sep=0.5pc]
          \catdgmod{\mathbb N}  \arrow[r, shift left=1.1ex, "\treemod_\pl"{name=F}] & \dgoperads_\pl  \arrow[r, shift left=1.1ex, "- \otimes \mathbb S"{name=A}] \arrow[l, shift left=.75ex, "\mathrm{U}"{name=U}]
          & \dgoperads~. \arrow[l, shift left=.75ex, "\mathrm{U}"{name=B}]
            \arrow[phantom, from=F, to=U, , "\dashv" rotate=-90]
             \arrow[phantom, from=A, to=B, , "\dashv" rotate=-90]
\end{tikzcd}
\]
The composite right adjoint is conservative and preserves sifted colimits, since both of its components do so. Thus, it is monadic. The related monad is given by $\treemod_\pl(-) \otimes \mathbb S$. Its monad structure is related to the monad structure of $\treemod_\pl$ by an exchange map $\treemod_\pl(-\otimes \mathbb S) \longrightarrow \treemod_\pl(-) \otimes \mathbb S$ defined below.

\begin{definition}[The tree intertwining map]
The \textit{tree intertwining map} $\tau$ is the natural morphism of dg $\mathbb N$-modules
\[
\tau: \treemod_\pl(-\otimes \mathbb S) \longrightarrow \treemod_\pl(\treemod_\pl(-) \otimes \mathbb S) \otimes \mathbb S
\longrightarrow \treemod_\pl(-) \otimes \mathbb S.
\]
whose first part is given by the units of the monads $- \otimes \mathbb S$ and $\treemod_\pl$ and whose second part is the monad product of $\treemod_\pl(-) \otimes \mathbb S$.
\end{definition}

The tree intertwining map $\tau$ is a morphism of planar dg operads. Hence the following natural diagram 
\[
\begin{tikzcd}[column sep=2.5pc,row sep=2.5pc]
    (X \otimes \mathbb S) \comp_\pl \treemod_\pl (X \otimes \mathbb S)
    \ar[r, "\cong"] \ar[d, "\id \comp_\pl \tau"']
    & \overline{\treemod}_\pl (X \otimes \mathbb S)
    \ar[r, hookrightarrow] \ar[dd, "\tau"]
    & {\treemod}_\pl (X \otimes \mathbb S)
    \ar[dd, "\tau"]
    \\
    (X \otimes \mathbb S) \comp_\pl (\treemod_\pl (X) \otimes \mathbb S)
    \ar[d]
    \\
    (X \comp_\pl \treemod_\pl (X)) \otimes \mathbb S
    \ar[r, "\cong"]
    & \overline{\treemod}_\pl (X) \otimes \mathbb S
    \ar[r, hookrightarrow]
    & {\treemod}_\pl (X) \otimes \mathbb S
\end{tikzcd}
\]
commutes and one can describe $\tau$ by induction on the size of trees.

\medskip

We denote by $\categ{Trees}$  the groupoid of trees, whose objects are planar trees and whose morphisms are isomorphisms which are not necessarily planar. We refer to Appendix \ref{appendixtrees} for more details. 

\begin{definition}[Planar endofunctor of a sub-groupoid]
Let $g$ denote a full sub-groupoid of the groupoid of trees $\categ{Trees}$. Let us assume that $g$ is \textit{connected} within $\categ{Trees}$ in the sense that, for any isomorphism of trees $t \longrightarrow t'$, $t'$ belongs to $g$ whenever $t$ does. The \textit{planar endofunctor} $g_\pl(-)$ associated to $g$ is the endofunctor of dg $\mathbb N$-modules given, for a dg $\mathbb{N}$-module $X$, by
\[
g_\pl(X) \coloneqq \bigoplus_{t \in g} t(X)~,
\]
where the sum is taken over planar trees in $g$ up to planar isomorphisms. 
\end{definition}

\begin{proposition}
For every connected full sub-groupoid $g$ of $\categ{Trees}$, the tree intertwining map 
\[
\tau: \treemod_\pl (- \otimes \mathbb S) \longrightarrow \treemod_\pl (-) \otimes \mathbb S
\]
restricts to a morphism
\[
\tau|_g: g_\pl (- \otimes \mathbb S) \longrightarrow g_\pl (-) \otimes \mathbb S.
\]
of dg $\mathbb{N}$-modules.
\end{proposition}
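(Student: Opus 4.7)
The plan is to prove that, for every planar tree $t$, the composition
$t(X \otimes \mathbb S) \hookrightarrow \treemod_\pl(X \otimes \mathbb S) \xrightarrow{\tau} \treemod_\pl(X) \otimes \mathbb S$
factors through $\bigoplus_{t'} t'(X) \otimes \mathbb S$, the sum running over those planar trees $t'$ which are isomorphic to $t$ in the full groupoid $\categ{Trees}$ (i.e., via a possibly non-planar isomorphism). Granting this factorisation, the theorem is immediate: if $t$ lies in $g$, the connectedness hypothesis forces each such $t'$ into $g$, so $\tau$ sends the summand $t(X \otimes \mathbb S)$ of $g_\pl(X \otimes \mathbb S)$ into $g_\pl(X) \otimes \mathbb S$, and the restriction $\tau|_g$ is well-defined.

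I would establish the factorisation claim by induction on the height of $t$, exploiting the commutative diagram displayed just before the statement together with the recursion $\treemod_\pl(-) \cong \operad I \oplus (-) \comp_\pl \treemod_\pl(-)$. The base case is trivial: on the summand $\operad I \subseteq \treemod_\pl(X \otimes \mathbb S)$, which corresponds to the trivial tree, $\tau$ reduces to the canonical identification $\operad I \cong \operad I \otimes \mathbb S$. For the inductive step, consider a non-trivial planar tree $t$ with root of arity $k$ and planar sub-trees $t_1, \ldots, t_k$; the summand $t(X \otimes \mathbb S)$ sits inside $(X \otimes \mathbb S)(k) \otimes \bigl(t_1(X \otimes \mathbb S) \otimes \cdots \otimes t_k(X \otimes \mathbb S)\bigr)$. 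The square above expresses $\tau$ on this summand as the composite of $\id \comp_\pl \tau$ — which, by the induction hypothesis, lands in a sum of summands $t_j'(X) \otimes \mathbb S$ with each $t_j'$ isomorphic to $t_j$ in $\categ{Trees}$ — followed by the lax monoidal structure map $\varphi_{X, \treemod_\pl X}$. The explicit formula for $\varphi$ in terms of the planar operad $u\operad A ss$ then shows that the root permutation $\sigma \in \mathbb S_k$ simply reshuffles the sub-trees, producing the planar tree $t'$ whose children are $t_{\sigma^{-1}(1)}', \ldots, t_{\sigma^{-1}(k)}'$. Concatenating the root-level child permutation with the sub-tree isomorphisms $t_j \to t_j'$ assembles into a (non-planar) isomorphism $t \to t'$ in $\categ{Trees}$, completing the induction.

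The main obstacle is entirely combinatorial bookkeeping: one must carefully track which planar trees $t'$ actually appear in the image of $\tau$ on $t(X \otimes \mathbb S)$ and verify that each is obtained from $t$ by permutations of children at each node — equivalently, that each fits into an isomorphism of $\categ{Trees}$. Once this tree-theoretic description is in hand, the restriction property is a direct consequence of the connectedness assumption on $g$.
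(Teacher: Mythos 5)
Your proposal is correct and follows essentially the same route as the paper's proof: reduce to the claim that for each planar tree $t$ the composite $t(X \otimes \mathbb S) \hookrightarrow \treemod_\pl(X \otimes \mathbb S) \to \treemod_\pl(X) \otimes \mathbb S$ lands in the summands indexed by trees isomorphic to $t$ in $\categ{Trees}$, then induct on height using the root-node decomposition and observe that the permutation $\sigma$ coming from the lax monoidal structure map only reshuffles the subtrees, yielding a (non-planar) isomorphism onto the grafted tree. This matches the paper's argument in both structure and substance.
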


\begin{proof}
Let $X$ be a dg $\mathbb{N}$-module. It is suffices to prove that for every planar tree $t$, the map
\[
t(X \otimes \mathbb S) \hookrightarrow \treemod_\pl (X \otimes \mathbb S) \longrightarrow \treemod_\pl (X) \otimes \mathbb S
\]
factors through $g_\pl (-) \otimes \mathbb S$ where $g$ is the connected component of the groupoid of trees that contains $t$.
This can be done by induction on the height of trees. On the trivial tree, the tree intertwining map is given by the identity map of $\operad I \otimes \mathbb S$. On trees of height one, the tree intertwining map is given by the identity of $X \otimes \mathbb S$.

\medskip

Let us assume that it is satisfied for all tree of height equal or lower than $m$ for some natural integer $m \geq 1$. Let $t$ be a planar tree with $n$ leaves that is made up of a root node with $k \geq 1$ inputs and topped by $k$ subtrees $t_1, \ldots, t_k$ of height equal or lower than $m$ and with $n_1,\ldots, n_k$ leaves. The height of $t$ is thus lower or equal to $m+1$. Let us denote $g_i$ the connected component within the groupoid of tree of $t_i$ for $i \in \{1, \ldots, k\}$ and $g$ the connected component of $t$. On $t$, the intertwining map is the composition
$$
\begin{tikzcd}
t(X \otimes \mathbb S)(n) \coloneqq (X(k) \otimes \kk[\mathbb S_{k}]) \otimes t_1(X \otimes \mathbb S)(n_1)
 \otimes \cdots \otimes t_k(X \otimes \mathbb S)(n_k)
 \ar[d]
 \\
 (X(k) \otimes \kk[\mathbb S_{k}])
 \otimes (g_{1,\pl}(X)(n_1) \otimes \kk[\mathbb S_{n_1}])
 \otimes \cdots \otimes (g_{k,\pl}(X)(n_k) \otimes \kk[\mathbb S_{n_k}])
 \ar[d, "\cong"]
 \\
 \displaystyle \bigoplus_{t'_1 \in g_1} \cdots \bigoplus_{t'_k \in g_k}
 (X(k) \otimes \kk[\mathbb S_{k}])
 \otimes (t'_{1}(X)(n_1) \otimes \kk[\mathbb S_{n_1}])
 \otimes \cdots \otimes (t'_{k}(X)(n_k) \otimes \kk[\mathbb S_{n_k}])
 \ar[d,"\xi"]
 \\
  \displaystyle \bigoplus_{t'_1 \in g_1} \cdots \bigoplus_{t'_k \in g_k} \bigoplus_{\sigma \in \mathbb S_{k}}
 X(k) \otimes t'_{\sigma^{-1}(1)}(X)(n_{\sigma^{-1}(1)}) \otimes \cdots \otimes t'_{\sigma^{-1}(k)}(X)(n_{\sigma^{-1}(k)}) \otimes \kk[\mathbb S_{n}]
 \ar[d, "\cong"]
 \\
  \displaystyle \bigoplus_{t'_1 \in g_1} \cdots \bigoplus_{t'_k \in g_k} \bigoplus_{\sigma \in \mathbb S_{k}}
 \mathrm{graft}_k(t'_{\sigma^{-1}(1)} ,\ldots, t'_{\sigma^{-1}(k)}) (X)(n) \otimes \kk[\mathbb S_{n}]
 \ar[d, hookrightarrow]
 \\
 \treemod_\pl (X) \otimes \mathbb S~.
\end{tikzcd}
$$
Here the third map $\xi$ is given by
\[
\xi: X(k) \otimes \sigma) 
 \otimes (t'_1(X) \otimes \sigma_{1})
 \otimes \cdots \otimes (t'_k(X) \otimes \sigma_k)
 \mapsto
 X(k) \otimes t'_{\sigma^{-1}(1)}(X) \otimes \cdots \otimes t'_{\sigma^{-1}(k)} \otimes \lambda~,
\]
where $\lambda \in \kk[\mathbb S_n]$ is the composition $\sigma \comp (\sigma_1 \otimes \cdots \otimes \sigma_k)$
in the operad $u\operad A ss$. Moreover, for every planar trees $t'_1\in g_1, \ldots, t'_k \in g_k$, $\mathrm{graft}_k(t'_{\sigma^{-1}(1)} ,\ldots, t'_{\sigma^{-1}(k)})$ is the planar tree with a root node with $k$ inputs and toppped by the sub-trees, in order, $t'_1, \ldots, t'_k$. Therefore it is clear that it is isomorphic to $t$. Thus the map $t(X \otimes \mathbb S) \hookrightarrow \treemod_\pl(X \otimes \mathbb S) \longrightarrow  \treemod_\pl(X) \otimes \mathbb S$ factors through $g_\pl(X) \otimes \mathbb S$ and we can conclude by induction.
\end{proof}

\subsection{The tree monad}
Let us consider the following commutative square of forgetful functors
\[
\begin{tikzcd}[column sep=3pc,row sep=3pc]
    \dgoperads
    \ar[d, "\mathrm{U}_{\mathbb S}"']
    \ar[r,"\mathrm{U}_{\mathrm{Op}}"]
    & \catdgmod{\mathbb S}
    \ar[d, "\mathrm{U}"]
    \\
    \dgoperads_\pl
    \ar[r,"\mathrm{U}_{\mathrm{nsOp}}"]
    &\catdgmod{\mathbb N}~.
\end{tikzcd}
\]
The two vertical functors $\mathrm{U}$ and $\mathrm{U}_{\mathbb S}$ are monadic and create sifted colimits. Since the bottom horizontal forgetful functor $\mathrm{U}_{\mathrm{nsOp}}$ admits a left adjoint, we can use the adjoint lifiting theorem, see \cite[Appendix A]{premierpapier} to deduce that the top horizontal functor $\mathrm{U}_{\mathrm{Op}}$ also admits a left adjoint. Moreover, $\mathrm{U}_{\mathrm{Op}}$ is conservative and preserves and creates sifted colimits. Therefore it is monadic.

\begin{definition}[The tree monad]
We define the \textit{tree monad} $\treemod$ to be the left adjoint of the forgetful functor $\mathrm{U}_{\mathrm{Op}}$, fitting in the following  adjunction

\[
\begin{tikzcd}[column sep=3.5pc,row sep=0.5pc]
          \catdgmod{\mathbb S}  \arrow[r, shift left=1.1ex, "\treemod"{name=F}] &\dgoperads~,   \arrow[l, shift left=.75ex, "\mathrm{U}_{\mathrm{Op}}"{name=U}]
            \arrow[phantom, from=F, to=U, , "\dashv" rotate=-90]        
\end{tikzcd}
\]
which is furthermore monadic.
\end{definition}

Again, by the adjunction lifting theorem, we know that given a dg $\mathbb{S}$-module $M$, then $\treemod(M)$ is given by a reflexive coequalizer of the form
\[
\begin{tikzcd}[column sep=3pc,row sep=4pc]
\mathrm{Coeq}\Bigg(\displaystyle (\treemod_\pl((\mathrm{U}(M) \otimes \mathbb S) \otimes \mathbb S \arrow[r,"",shift right=1.1ex,swap]  \arrow[r,""{name=SD},shift left=1.1ex ]
&\treemod_\pl(\mathrm{U}(M)) \otimes \mathbb S\Bigg) \arrow[r,dashed]
&\treemod (M)~,
\end{tikzcd}
\]
where one of the maps is build from the dg $\mathbb{S}$-module structure of $M$ and the other using the monad structures $- \otimes \mathbb S$.

\medskip 

\begin{lemma}
Let $X$ be a dg $\mathbb{N}$-module. The canonical map 
\[
\nu_X: \treemod (X \otimes \mathbb S) \longrightarrow \treemod_\pl (X) \otimes \mathbb S
\]
is a isomorphism of dg operads, natural in $X$. 
\end{lemma}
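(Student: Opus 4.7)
\medskip

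\textbf{Proof plan.} The map $\nu_X$ is the unique morphism of dg operads whose underlying map of dg $\mathbb S$-modules fits in the commutative triangle
\[
\begin{tikzcd}[column sep=2pc,row sep=1.5pc]
	X \otimes \mathbb S \ar[r, "\eta^{\treemod}_{X\otimes \mathbb S}"] \ar[rd, "\eta^{\treemod_\pl}_X \otimes \mathbb S"'] & U_{\mathrm{Op}}(\treemod(X \otimes \mathbb S)) \ar[d, "U_{\mathrm{Op}}(\nu_X)"] \\
	& \treemod_\pl(X) \otimes \mathbb S,
\end{tikzcd}
\]
where the target is a dg operad thanks to the strong monoidal structure on $-\otimes \mathbb S: \dgoperads_\pl \to \dgoperads$. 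Naturality in $X$ is then automatic.

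\medskip

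To prove $\nu_X$ is an isomorphism, the idea is to observe that both functors $X \mapsto \treemod(X \otimes \mathbb S)$ and $X \mapsto \treemod_\pl(X) \otimes \mathbb S$ are left adjoints to the \emph{same} composite functor $\dgoperads \to \catdgmod{\mathbb N}$. More precisely, the commutative square of forgetful functors displayed just before the definition of the tree monad gives the identity $U \circ U_{\mathrm{Op}} = U_{\mathrm{nsOp}} \circ U_{\mathbb S}$. Composing the adjunctions $\treemod_\pl \dashv U_{\mathrm{nsOp}}$ with $-\otimes \mathbb S \dashv U_{\mathbb S}$ on the one hand, and $-\otimes \mathbb S \dashv U$ with $\treemod \dashv U_{\mathrm{Op}}$ on the other, yields for every dg operad $\mathcal P$ natural bijections
\begin{align*}
\mathrm{Hom}_{\dgoperads}\!\left(\treemod(X \otimes \mathbb S), \mathcal P\right)
& \cong \mathrm{Hom}_{\catdgmod{\mathbb N}}(X, U \, U_{\mathrm{Op}}(\mathcal P)) \\
& = \mathrm{Hom}_{\catdgmod{\mathbb N}}(X, U_{\mathrm{nsOp}}\, U_{\mathbb S}(\mathcal P)) \\
& \cong \mathrm{Hom}_{\dgoperads}\!\left(\treemod_\pl(X) \otimes \mathbb S, \mathcal P\right).
\end{align*}
By the Yoneda lemma, this produces a natural isomorphism between the two functors.

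\medskip

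The only remaining point is to check that this Yoneda-induced isomorphism coincides with $\nu_X$. This is a routine diagram chase: taking $\mathcal P = \treemod_\pl(X) \otimes \mathbb S$ and tracing the image of the identity through the chain of bijections above, one recovers precisely the map $X \otimes \mathbb S \to \treemod_\pl(X) \otimes \mathbb S$ obtained from $\eta^{\treemod_\pl}_X$, which is the universal datum defining $\nu_X$. The main (minor) obstacle is simply this last compatibility check, but it is forced by the triangle identities of the two composed adjunctions together with the equality of forgetful functors.
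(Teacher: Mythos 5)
Your argument is correct and is essentially the paper's intended proof: the paper dismisses this lemma with ``Follows from the definitions,'' and the content of that remark is precisely your observation that $\treemod \circ (-\otimes \mathbb S)$ and $(-\otimes \mathbb S)\circ \treemod_\pl$ are both left adjoint to the same composite forgetful functor $\dgoperads \to \catdgmod{\mathbb N}$ (via the commuting square of forgetful functors), hence canonically and naturally isomorphic. Your final compatibility check identifying the Yoneda isomorphism with $\nu_X$ via the unit maps is the right way to finish, and is forced by the triangle identities as you say.
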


\begin{proof}
Follows from the definitions.
\end{proof}

\begin{definition}[Endofunctor of a sub-groupoid]
Let $g$ be a connected full sub-groupoid of the groupoid of trees $\categ{Trees}$. The \textit{endofunctor} $g(-)$ associated to $g$ is the endofunctor of the category of dg $\mathbb{S}$-modules given, for a dg $\mathbb{S}$-module $M$, by the following coequalizer
\[
\begin{tikzcd}[column sep=3pc,row sep=4pc]
\mathrm{Coeq}\Bigg(\displaystyle (g_\pl(\mathrm{U}(M) \otimes \mathbb S) \otimes \mathbb S \arrow[r,"",shift right=1.1ex,swap]  \arrow[r,""{name=SD},shift left=1.1ex ]
&g_\pl(\mathrm{U}(M)) \otimes \mathbb S\Bigg) \arrow[r,dashed]
&g(M)~,
\end{tikzcd}
\]
where one map is given by the dg $\mathbb{S}$-module structure of $M$ and the other by the monad structure of $- \otimes \mathbb S$.
\end{definition}

\begin{remark}
Let $M$ be a dg $\mathbb{S}$-module, the canonical map 
\[
\bigoplus_g g(X) \longrightarrow \treemod (X)
\]
is an isomorphism when the sum is over every connected components of the groupoid $\categ{Trees}$.
\end{remark}

\begin{notation}
Let $n$ be a natural integer. For some particular sub-groupoids $g$, we use the following notations.

\medskip

\begin{itemize}
\item For $g$ being the sub-groupoid of non trivial trees, we denote $g(X)$ as $\overline{\treemod}(X)$, and call it the \textit{reduced tree} endofunctor.

\medskip

    \item For $g$ the sub-groupoid of trees of height at most $n$, we denote $g(X)$ as $\treemod_{\leq n}(X)$.
    
\medskip

    \item For $g$ the sub-groupoid of trees with $n$ nodes (resp. at most $n$ nodes), we denote $g(X)$ as $\treemod^{(n)}(X)$.
    
\medskip

	\item For $g$ the sub-groupoid of trees with at most $n$ nodes, we denote $g(X)$ as $\treemod^{(\leq n)}(X)$.

\end{itemize}
\end{notation}

\begin{proposition}\label{corollarygiso}
    Let $X$ be a dg $\mathbb{N}$-module. The natural isomorphism 
    \[
    \nu_X: \treemod (X \otimes \mathbb S) \longrightarrow \treemod_\pl (X) \otimes \mathbb S
    \]
    restrict to a natural isomorphism
    \[
    \nu_X|_g: g(X \otimes \mathbb S) \longrightarrow g_\pl (X) \otimes \mathbb S
    \]
    for every connected full sub-groupoid $g$ of the groupoid of trees $\categ{Trees}$.
\end{proposition}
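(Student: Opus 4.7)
The plan is to reduce the statement to the observation that both sides of the isomorphism $\nu_X$ decompose as direct sums indexed by the connected components of the groupoid $\categ{Trees}$, and that $\nu_X$ respects this decomposition. Restricting to the summand indexed by $g$ then yields the claim.

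The target splits tautologically: since each planar tree belongs to a unique connected component of $\categ{Trees}$, we have $\treemod_\pl(X) = \bigoplus_h h_\pl(X)$ indexed over connected components $h$, and since $-\otimes \mathbb S$ preserves direct sums this gives $\treemod_\pl(X) \otimes \mathbb S = \bigoplus_h (h_\pl(X) \otimes \mathbb S)$. For the source, I would invoke the defining reflexive coequalizer of $\treemod(M)$ with $M = X \otimes \mathbb S$ and show that both parallel arrows
\[
\treemod_\pl(\mathrm{U}(M) \otimes \mathbb S) \otimes \mathbb S \rightrightarrows \treemod_\pl(\mathrm{U}(M)) \otimes \mathbb S
\]
decompose as direct sums of the analogous arrows for each connected component $h$. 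The arrow induced by the $\mathbb S$-action on $M$ only modifies leaf labels and hence preserves tree shape; the arrow involving the tree intertwining map $\tau$ preserves the decomposition precisely because $\tau$ does, by the preceding proposition. Since direct sums commute with coequalizers in dg $\mathbb S$-modules, this yields $\treemod(M) = \bigoplus_h h(M)$, and in particular $\treemod(X \otimes \mathbb S) = \bigoplus_h h(X \otimes \mathbb S)$.

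Finally, the isomorphism $\nu_X$ is induced by the identity of $\treemod_\pl(X) \otimes \mathbb S$ viewed as a map between the two coequalizer presentations of the two sides, and this identity manifestly respects the direct-sum decomposition. Hence $\nu_X$ restricts componentwise to the required isomorphism $\nu_X|_g : g(X \otimes \mathbb S) \to g_\pl(X) \otimes \mathbb S$, which is natural in $X$ since the decomposition and the identification are.

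The main step is verifying that both parallel arrows in the coequalizer preserve the decomposition by connected components. For the arrow coming from the $\mathbb S$-module structure this is immediate, but for the arrow built from $\tau$ one genuinely needs the content of the preceding proposition, which guarantees that $\tau$ itself respects the decomposition. Once this bookkeeping is in place, the remainder of the argument is purely formal, using the commutation of colimits with direct sums together with the fact that the isomorphism $\nu_X$ was constructed at the coequalizer level from a map that is obviously compatible with the splitting.
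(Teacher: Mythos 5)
Your proposal is correct and follows essentially the same route as the paper: both decompose source and target as direct sums over connected components of $\categ{Trees}$ (using the preceding proposition to know that $\tau$, and hence the coequalizer diagrams, respect this splitting) and conclude that $\nu_X$ restricts to an isomorphism on each summand. The paper phrases this slightly more economically by writing $\nu_X = \nu_X|_g \oplus \nu_X|_{\overline{g}}$ for the complement subgroupoid $\overline{g}$ and noting that a block-diagonal isomorphism has isomorphic blocks, but the content is the same.
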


\begin{proof}
It suffices to notice that if $\overline{g}$ is the complement sub-groupoid of $g$ in $\categ{Trees}$, then the map
\[
\nu_X: \treemod (X \otimes \mathbb S) \longrightarrow \treemod_\pl (X) \otimes \mathbb S
\]
rewrites as the sum
    \[
    \nu_X|_g \oplus \nu_X|_{\overline g} :
    g (X \otimes \mathbb S) \oplus \overline g (X \otimes \mathbb S)
    \longrightarrow g_\pl (X) \otimes \mathbb S \oplus \overline g_\pl (X) \otimes \mathbb S~.
    \]
    Since $\nu_X$ is an isomorphism, so are $\nu_X|_g$ and $\nu_X|_{\overline g}$.
\end{proof}

\begin{proposition}\label{corollarytreemodcommutation}
Let $g$ be a connected full sub-groupoid of of the groupoid of trees $\categ{Trees}$. Let $M$ be a dg $\mathbb{S}$-module. The isomorphism 
\[
\nu_{\mathrm{U}(M)}|_g: g(\mathrm{U}(M) \otimes \mathbb S) \longrightarrow g_\pl (\mathrm{U}(M)) \otimes \mathbb S
\]
send the natural diagram in dg $\mathbb S$-modules
\[
\begin{tikzcd}[column sep=3pc,row sep=4pc]
\mathrm{Coeq}\Bigg(\displaystyle g_\pl(\mathrm{U}(M) \otimes \mathbb S) \otimes \mathbb S \arrow[r,"",shift right=1.1ex,swap]  \arrow[r,""{name=SD},shift left=1.1ex ]
&g_\pl(\mathrm{U}(M)) \otimes \mathbb S\Bigg) \arrow[r,dashed]
&g(M)~,
\end{tikzcd}
\]

to the natural diagram
\[
\begin{tikzcd}[column sep=3pc,row sep=4pc]
\mathrm{Coeq}\Bigg(\displaystyle g(\mathrm{U}(M) \otimes \mathbb S \otimes \mathbb S) \otimes \mathbb S \arrow[r,"",shift right=1.1ex,swap]  \arrow[r,""{name=SD},shift left=1.1ex ]
&g(\mathrm{U}(M) \otimes \mathbb S) \otimes \mathbb S \Bigg) \arrow[r,dashed]
&g(M)~,
\end{tikzcd}
\]
whose the maps are given by the monad structure on $- \otimes S$ and the action of this monad on $M$.
\end{proposition}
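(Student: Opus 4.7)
The plan is to transport the coequalizer presentation of $g(M)$ of the first diagram across the isomorphism $\nu|_g$ of Proposition~\ref{corollarygiso}, using naturality to match the two parallel maps.

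First, I would observe that $\nu|_g$, applied separately to the dg $\mathbb{N}$-modules $\mathrm{U}(M)$ and $\mathrm{U}(M) \otimes \mathbb{S}$, produces the isomorphisms needed to identify the sources and targets of the parallel pairs in the two diagrams (after possibly tensoring with $\mathbb{S}$). It then remains to check that, under these identifications, the parallel maps coincide.

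One of the maps in the first diagram is built from the morphism of dg $\mathbb{N}$-modules $\mathrm{U}(\pi_M): \mathrm{U}(M) \otimes \mathbb{S} \to \mathrm{U}(M)$ arising from the dg $\mathbb{S}$-module structure of $M$; the naturality of $\nu|_g$ applied to this morphism matches it with the corresponding action map in the second diagram. Likewise, the other map is built from the multiplication $\mu_{\mathrm{U}(M)}: \mathrm{U}(M) \otimes \mathbb{S} \otimes \mathbb{S} \to \mathrm{U}(M) \otimes \mathbb{S}$ of the monad $- \otimes \mathbb{S}$, and the naturality of $\nu|_g$ applied to $\mu_{\mathrm{U}(M)}$ yields the match on the other side. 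Once both parallel pairs are identified, uniqueness of coequalizers concludes the argument.

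The main point of substance is the compatibility of $\nu$ with the monad multiplication $\mu$. This amounts to tracking how the tree intertwining map processes vertex labels of the form $x \otimes \sigma \otimes \tau$ in $X \otimes \mathbb{S} \otimes \mathbb{S}$ and checking coherence with the multiplication in each $\kk[\mathbb{S}_n]$. I expect this to be the main obstacle, but it can be handled by an induction on the height of trees, following the same inductive description used to define $\tau$, and invoking Lemma~\ref{lemmalaxmonoidalnt} at each stage to ensure that $\mu$ commutes with the grafting of permutations coming from the planar operad $u\operad A ss$.
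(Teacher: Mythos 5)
Your overall plan --- transport the defining coequalizer across the isomorphism $\nu|_g$ of Proposition \ref{corollarygiso} and match the two parallel pairs --- is the right one, and your treatment of the first map is fine: the arrow built from the action is $g_\pl(\mathrm{U}(\pi_M))\otimes\mathbb S$, and the square identifying it with $g(\mathrm{U}(\pi_M)\otimes\mathbb S)$ is literally a naturality square for $\nu|_g$. The weak point is the claim that ``the naturality of $\nu|_g$ applied to $\mu_{\mathrm{U}(M)}$ yields the match on the other side.'' It does not: the square you need has top edge $g(\mu_{\mathrm{U}(M)})$ and bottom edge $\mu_{g_\pl(\mathrm{U}(M))}\circ(\tau|_g\otimes\mathbb S)$, and the latter is not of the form $g_\pl(f)\otimes\mathbb S$ for any map $f$ of dg $\mathbb N$-modules --- it involves the tree intertwining map $\tau$, which is genuine extra structure that naturality of $\nu$ cannot produce (likewise $\mu_{\mathrm{U}(M)}$ is not of the form $f\otimes\mathbb S$, so the square is not a naturality square in either reading). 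Your final paragraph in effect concedes this by isolating ``the compatibility of $\nu$ with the monad multiplication $\mu$'' as the main point of substance; that compatibility \emph{is} the entire content of the proposition, so the appeal to naturality for this half should be dropped rather than merely qualified.

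Granting that correction, your proposed fix --- induction on the height of trees, tracking labels $x\otimes\sigma\otimes\tau$ and invoking Lemma \ref{lemmalaxmonoidalnt} to commute $\mu$ past the grafting permutations coming from $u\operad A ss$ --- would work; it is the same style of argument the paper uses for Lemmas \ref{lemmaconilpotentcooperadlemmaone} and \ref{lemmaconilpotentcooperadcomonad}. The paper's own proof is shorter and avoids the induction: it first reduces to $g=\categ{Trees}$ (the case of a sub-groupoid follows by restricting the direct-sum decomposition over connected components, as in Proposition \ref{corollarygiso}), and then observes that every arrow in the two squares to be checked --- $\nu$, $\tau$, $\treemod(\mu_{\mathrm{U}(M)})$, the projection to $\treemod(M)$ --- is a morphism of dg operads whose source is a \emph{free} dg operad, so the two composites agree as soon as they agree on generators, where the verification is immediate. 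Note that this freeness argument is only available for the full groupoid, which is why the reduction to $g=\categ{Trees}$ comes first; your height induction would in practice also have to run over all trees at once, since the inductive step for a tree in $g$ involves subtrees lying in other connected components.
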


\begin{proof}
It suffices to prove the result for $g$ the whole groupoid of trees $\categ{Trees}$. This amounts to prove that the following diagrams 
    $$
    \begin{tikzcd}[column sep=2.5pc,row sep=2.5pc]
        \treemod (\mathrm{U}(M) \otimes \mathbb S \otimes \mathbb S)
        \ar[r,"\treemod(\mu_{\treemod})"] \ar[d,"\nu_{\mathrm{U}(M) \otimes \mathbb S}",swap]
        & \treemod (\mathrm{U}(M) \otimes \mathbb S)
        \ar[d,"\nu_{\mathrm{U}(M)}"]
        \\
        \treemod_\pl (\mathrm{U}(M) \otimes \mathbb S) \otimes \mathbb S
        \ar[r,"\tau_{\mathrm{U}(M)}"]
        & \treemod_\pl (\mathrm{U}(M)) \otimes \mathbb S
    \end{tikzcd}
    \begin{tikzcd}[column sep=2.5pc,row sep=2.5pc]
        \treemod (\mathrm{U}(M) \otimes \mathbb S)
        \ar[r,"\nu_{\mathrm{U}(M)}"] \ar[rd,"\pi_M ",swap]
        & \treemod_\pl (\mathrm{U}(M)) \otimes \mathbb S
        \ar[d,two heads]
        \\
        & \treemod (\mathrm{U}(M))
    \end{tikzcd}
    $$
are commutative in the category of dg operads. Here the top horizontal map of the left square diagram $\treemod(\mu_{\mathrm{U}(M)})$ is induced by the monad product $\mu: - \otimes \mathbb S \otimes \mathbb S \longrightarrow -\otimes \mathbb S$ and the bottom horizontal map is the tree intertwining map $\tau$. This commutativity follows from the fact that $\treemod(-)$ is the free dg operad functor.
\end{proof}

\begin{corollary}\label{corollary: iso de la récurrence sur les arbres}
Let $n$ be a natural integer and $M$ a dg $\mathbb{S}$-module. The composition of the free operad $\gamma_M: \treemod \comp \treemod M \longrightarrow \treemod M$ induces the following isomorphisms

    \begin{align*}
        &M \comp \treemod M \cong \overline{\treemod} M~,
        \\
        &M \comp \treemod_{\leq n} M \cong \overline{\treemod}_{\leq n+1} M~,
    \end{align*}

    of dg $\mathbb{S}$-modules. 
\end{corollary}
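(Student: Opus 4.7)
The plan is to deduce the isomorphisms from their planar counterparts. From the remark immediately after the definition of the levelled tree modules one has the natural isomorphisms
\[
\overline{\treemod}_{\pl,\leq n+1} X \cong X \comp_\pl \treemod_{\pl,\leq n} X, \qquad \overline{\treemod}_\pl X \cong X \comp_\pl \treemod_\pl X,
\]
for any dg $\mathbb{N}$-module $X$. The strategy is to transport these across the free-forget adjunction via Proposition \ref{corollarygiso} and Proposition \ref{lemmacompositioniso}, establishing the statement for free dg $\mathbb{S}$-modules first, then extending by a sifted colimit argument.

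First I would handle a free dg $\mathbb{S}$-module $M = X \otimes \mathbb{S}$. Proposition \ref{corollarygiso} gives natural identifications $\treemod_{\leq n} M \cong \treemod_{\pl,\leq n}(X) \otimes \mathbb{S}$ and $\overline{\treemod}_{\leq n+1} M \cong \overline{\treemod}_{\pl,\leq n+1}(X) \otimes \mathbb{S}$, with analogous identities in the unbounded case. Chaining these with the strong monoidal isomorphism $\psi$ of Proposition \ref{lemmacompositioniso} and the planar formula gives
\[
M \comp \treemod_{\leq n} M \cong (X \otimes \mathbb{S}) \comp (\treemod_{\pl,\leq n}(X) \otimes \mathbb{S}) \cong (X \comp_\pl \treemod_{\pl,\leq n} X) \otimes \mathbb{S} \cong \overline{\treemod}_{\leq n+1} M,
\]
and analogously for $\treemod$. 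It then remains to verify that this composite matches the map induced by $\gamma_M$, which follows from the fact that under $\nu$, the planar grafting morphism defining $\overline{\treemod}_{\pl,\leq n+1} X \cong X \comp_\pl \treemod_{\pl,\leq n} X$ corresponds to the restriction of $\gamma_M$ to the $(M \comp \treemod_{\leq n} M)$-summand; this compatibility is recorded by Proposition \ref{corollarytreemodcommutation} together with the definition of $\treemod$ as the free symmetric operad monad.

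Finally, I would extend the result to an arbitrary dg $\mathbb{S}$-module $M$ using its canonical resolution as the reflexive coequalizer
\[
U(M) \otimes \mathbb{S} \otimes \mathbb{S} \rightrightarrows U(M) \otimes \mathbb{S} \twoheadrightarrow M
\]
by free dg $\mathbb{S}$-modules. Both endofunctors $M \mapsto M \comp \treemod_{\leq n} M$ and $M \mapsto \overline{\treemod}_{\leq n+1} M$ preserve such reflexive coequalizers: the composition product does so in each variable by the explicit formula of Theorem \ref{prop: equal to the usual composition product}, while the various tree functors do so by their coequalizer presentation in Proposition \ref{corollarytreemodcommutation}, since $g_\pl$ and $- \otimes \mathbb{S}$ commute with colimits of this shape. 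The natural isomorphism constructed on free dg $\mathbb{S}$-modules therefore descends to one on general $M$.

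The main obstacle is this final coherence check: identifying the abstract isomorphism produced on free $\mathbb{S}$-modules with the one induced by the operadic composition $\gamma_M$. Everything else is essentially a transport of the planar isomorphism along the known intertwinings. The heart of the compatibility is exactly what Proposition \ref{corollarytreemodcommutation} provides, namely that the monad structures on $\treemod$ and on $\treemod_\pl(-) \otimes \mathbb{S}$ are intertwined by $\nu$ in a manner compatible with planar grafting; once spelled out, this identifies the two candidates for the isomorphism on free modules, and the sifted-colimit extension then finishes the argument for both the bounded and unbounded statements simultaneously.
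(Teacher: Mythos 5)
Your proposal is correct and follows essentially the same strategy as the paper's proof: establish the isomorphism for free dg $\mathbb{S}$-modules $M = X \otimes \mathbb{S}$ by transporting the planar isomorphism $X \comp_\pl \treemod_\pl X \cong \overline{\treemod}_\pl X$ through $- \otimes \mathbb{S}$, then extend to arbitrary $M$ via the reflexive-coequalizer presentation. Your write-up is in fact more explicit than the paper's about the compatibility with $\gamma_M$ and about why both sides preserve the relevant colimits.
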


\begin{proof}
First, let us prove the fact that the composition restricts to the isomorphism $M \comp \treemod M \cong \overline{\treemod} M$. The fact that it restricts to the other isomorphisms $M \comp \treemod_{\leq n} M \cong \overline{\treemod}_{\leq n+1} M$ will follow from analogue arguments.
    
\medskip

When $M$ is a free dg $\mathbb S$-module of the form $X \otimes \mathbb S$, this composition rewrites as 
    \[
    (\treemod_\pl (X) \comp_\pl \treemod_\pl (X)) \otimes \mathbb S \longrightarrow \treemod_\pl (X) \otimes \mathbb S
    \]
    which restricts to an isomorphism
    \[
    (X \comp_\pl \treemod_\pl X)\otimes \mathbb S \cong \overline{\treemod}_\pl (X) \otimes \mathbb S.   
    \]
    The general case follows from the identification of the subsequent coequalizers that define each of the terms.
\end{proof}

%-------------------------------------------------

\section{The reduced tree comonad and conilpotent cooperads}
We define conilpotent dg cooperad as coalgebras over the reduced tree comonad. Once again, this corresponds to the fact that any iteration of partial decompositions in the cooperad is eventually trivial. First, we endow the reduced tree endofunctor with a comonad structure, induced by the comonad structure of the reduced planar tree endofunctor.

\subsection{The reduced tree comonad}
\begin{lemma}\label{lemmaconilpotentcooperadlemmaone}
Let $X$ be a dg $\mathbb{N}$-module. The tree intertwining map $\tau$ is compatible with the planar cooperad structure of the reduced planar tree comonad. That is, the following diagram 
    \[
    \begin{tikzcd}[column sep=2.5pc,row sep=2.5pc]
        \overline{\treemod}_\pl(X \otimes \mathbb S)
        \ar[rr, "\tau_{X}"] \ar[d,"\Delta",swap]
        && \overline{\treemod}_\pl(X) \otimes \mathbb S 
        \ar[d,"\Delta \otimes \mathbb{S}"]
        \\
        \overline{\treemod}_\pl(X \otimes \mathbb S) \comp_\pl \overline{\treemod}_\pl(X \otimes \mathbb S)
        \ar[r,"\tau_{X} \comp_\pl \tau_{X}"]
        & (\overline{\treemod}_\pl(X) \otimes \mathbb S) \comp_\pl (\overline{\treemod}_\pl(X) \otimes \mathbb S)
        \ar[r,"\varphi"]
        & (\overline{\treemod}_\pl(X)  \comp_\pl \overline{\treemod}_\pl(X)) \otimes \mathbb S,
    \end{tikzcd}
   	\]
commutes in the category of dg $\mathbb{N}$-modules. Here $\Delta$ is the planar cooperad structure of $\overline{\treemod}_\pl(X \otimes \mathbb S)$ the horizontal map $\varphi$ is the lax monoidal functor structure on $- \otimes \mathbb S$.
\end{lemma}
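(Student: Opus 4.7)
The plan is to reduce the commutativity of the square to a tree-by-tree check and then to proceed by induction on the height of the indexing tree. First I would use that $\overline{\treemod}_\pl(X \otimes \mathbb{S}) = \bigoplus_t t(X \otimes \mathbb{S})$ splits as a direct sum indexed by (planar isomorphism classes of) non-trivial planar trees, and that both composites in the diagram respect this decomposition on the source. Thus it suffices to verify the diagram when the input is concentrated on a single summand $t(X \otimes \mathbb{S})$.

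On an element $(x_1 \otimes \sigma_1) \otimes \cdots \otimes (x_k \otimes \sigma_k) \in t(X \otimes \mathbb{S})(n)$ I would spell out both paths. Along the top route, the formula for $\tau|_t$ given in the proposition just above produces a planar tree $t'$ in the non-planar isomorphism class of $t$, obtained by reordering the topping sub-trees at each node according to the local $\sigma_i$'s, together with a global permutation $\lambda \in \mathbb{S}_n$ computed as an iterated composition of the $\sigma_i$'s in the planar operad $u\operad A ss$. The coproduct $\Delta$ then partitions $t'$ into non-trivial sub-trees while the global $\lambda$ is simply carried through $-\otimes \mathbb{S}$. Along the bottom route, $\Delta$ first splits $t$ into a family of non-trivial sub-trees with quotient tree, then $\tau$ is applied independently to the quotient and to each sub-tree yielding local permutations, and finally $\varphi$ composes these local permutations into a single element of $\kk[\mathbb{S}_n]$.

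Then I would argue by induction on the height of $t$. The base case is that of corollas, where $\tau|_t$ is the canonical identity on $X(k) \otimes \kk[\mathbb{S}_k]$ and where the only non-trivial partition is the corolla itself, so that both paths reduce to the same map. For the inductive step I would write $t = \mathrm{graft}_k(t_1, \ldots, t_k)$ and use the recursive commutative diagram for $\tau$ displayed just above the definition of the planar endofunctor of a sub-groupoid to express $\tau|_t$ in terms of the $\tau|_{t_i}$'s and of $\varphi$; dually, I would decompose every partition of $t$ into a choice of partitions of each $t_i$ together with data recording where the root sits (either alone as a height-one sub-tree or amalgamated with pieces of the $t_i$'s), so that the $\Delta$-half of each route likewise breaks into components handled by the induction hypothesis.

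The heart of the argument, and the main obstacle, is showing that the two different groupings of permutations agree. Applying $\tau$ globally and then partitioning gives a certain iterated composition of the $\sigma_i$'s in $u\operad A ss$; partitioning first, applying $\tau$ locally, and then recomposing via $\varphi$ produces an \emph{a priori} differently parenthesized composition of the same $\sigma_i$'s. Their equality is precisely the associativity of the composition in the planar operad $u\operad A ss$, which is the very data used to build both the lax monoidal structure $\varphi$ and the intertwining map $\tau$ itself. Put differently, this is an instance of the already-recorded fact that $\tau$ is a morphism of planar operads, so the inductive step ultimately reduces to unpacking that operadic compatibility.
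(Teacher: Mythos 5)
Your proposal is correct and follows essentially the same route as the paper: an induction on the height of trees, with corollas as the base case, the root-node decomposition $t = \mathrm{graft}_k(t_1,\ldots,t_k)$ (equivalently the isomorphism $\overline{\treemod}_{\pl,\leq n+1} \cong (X\otimes\mathbb S)\comp_\pl \treemod_{\pl,\leq n}$ used in the paper) driving the inductive step, and the associativity of the composition in $u\mathcal{A}ss$ --- i.e.\ the coherence of $\varphi$ and the fact that $\tau$ is a morphism of planar operads --- as the key input reconciling the two groupings of permutations. The only difference is presentational: you argue element-wise on individual tree summands where the paper assembles the same induction into commutative diagrams of functors filtered by height.
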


\begin{proof}
Let us prove this result by induction on the height of the trees. This amounts to show that for every natural integer $n \geq 1$, the pre-compositions by the inclusion $\overline{\treemod}_{\pl, \leq n}(X \otimes \mathbb S) \hookrightarrow\overline{\treemod}_{\pl}(X \otimes \mathbb S)$ of the arrows in the square diagram are equal.

\medskip

For trees of height $n=1$, the diagram above restricts to
    \[
    \begin{tikzcd}[column sep=2.5pc,row sep=2.5pc]
        X \otimes \mathbb S
        \ar[r, "\id"] \ar[d, "\cong"']
        & X \otimes \mathbb S
        \ar[d, "\cong"]
        \\
        (X \otimes \mathbb S) \comp_\pl \operad I
        \ar[r]
        &  (X \comp_\pl \operad I) \otimes \mathbb S~,
    \end{tikzcd}
    \]
    which commutes. Let us assume that the result is satisfied up to a natural integer $n \geq 1$. Now, let us consider the following diagram
    $$
    \begin{tikzcd}[column sep=2.5pc,row sep=2.5pc]
        \overline{\treemod}_{\pl, \leq n+1}(X \otimes \mathbb S)
        \ar[r,"\Delta_{\leq n+1}"] \ar[d, "\cong"']
        & \overline{\treemod}_{\pl, \leq n+1} (X \otimes \mathbb S) \comp_\pl {\treemod}_{\pl, \leq n}(X \otimes \mathbb S)
        \ar[d, "\cong"]
        \\
        (X \otimes \mathbb S) \comp_\pl {\treemod}_{\pl, \leq n}(X\otimes \mathbb S)
        \ar[r,"\id \circ_\pl ~ \Delta_{\leq n}"] \ar[dd,"\id \circ_\pl~ (\tau_X)_{\leq n}",swap]
        & (X \otimes \mathbb S) \comp_\pl {\treemod}_{\pl, \leq n}(X\otimes \mathbb S) \comp_\pl {\treemod}_{\pl, \leq n}(X\otimes \mathbb S)
        \ar[d,"\id \circ_\pl~ (\tau_X)_{\leq n} \circ_\pl~ (\tau_X)_{\leq n}"]
        \\
        & (X \otimes \mathbb S) \comp_\pl ({\treemod}_{\pl, \leq n}(X)\otimes \mathbb S) \comp_\pl ({\treemod}_{\pl, \leq n}(X)\otimes \mathbb S)
        \ar[d,"\id \circ_\pl~ \varphi"]
        \\
        (X \otimes \mathbb S) \comp_\pl ({\treemod}_{\pl, \leq n}(X)\otimes \mathbb S)
        \ar[r,"\id \circ_\pl ~ \Delta_{\leq n}"] \ar[d,"\varphi",swap]
        & (X \otimes \mathbb S) \comp_\pl (({\treemod}_{\pl, \leq n}(X) \comp_\pl {\treemod}_{\pl, \leq n}(X))\otimes \mathbb S)
        \ar[d,"\varphi"]
        \\
        (X \comp_\pl {\treemod}_{\pl, \leq n}(X)) \otimes \mathbb S
        \ar[r,"(\id \circ_\pl ~ \Delta_{\leq n})\otimes \mathbb{S}"] \ar[d, "\cong"']
        & (X \comp_\pl {\treemod}_{\pl, \leq n}(X) \comp_\pl {\treemod}_{\pl, \leq n}(X)) \otimes \mathbb S
        \ar[d, "\cong"]
        \\
        \overline{\treemod}_{\pl, \leq n+1}(X) \otimes \mathbb S
        \ar[r,"\Delta_{\leq n+1} \otimes \mathbb S"]
        & (\overline{\treemod}_{\pl, \leq n+1}(X) \comp_\pl {\treemod}_{\pl, \leq n+1}(X)) \otimes \mathbb S~,
    \end{tikzcd}
    $$
    where the top horizontal and the bottom horizontal isomorphisms are,respectively: 
    \[
    \overline{\treemod}_{\pl, \leq n+1}(X \otimes \mathbb S) \cong (X \otimes \mathbb S) \comp_\pl \overline{\treemod}_{\pl, \leq n}(X\otimes \mathbb S) \quad \text{and} \quad (X \comp_\pl \overline{\treemod}_{\pl, \leq n}(X)) \otimes \mathbb S \cong \overline{\treemod}_{\pl, \leq n+1}(X) \otimes \mathbb S~,
    \]
    where the first one is given by Corollary \ref{corollary: iso de la récurrence sur les arbres}. One can check that both of these isomorphisms are compatible with the planar cooperad structures, hence the top and the bottom squares commute. The second cell square commutes by induction hypothesis and the third by naturality of $\varphi$. Combined with the fact that the two following square diagrams 
    $$
    \begin{tikzcd}[column sep=2.5pc,row sep=2.5pc]
        \overline{\treemod}_{\pl, \leq n+1}(X \otimes \mathbb S)
        \ar[r,"\cong"] \ar[dd,"\tau_X",swap]
        & (X \otimes \mathbb S) \comp_\pl {\treemod}_{\pl, \leq n}(X\otimes \mathbb S)
        \ar[d,"\id ~\circ_\pl \tau_X"]
        \\
        & (X \otimes \mathbb S) \comp_\pl ({\treemod}_{\pl, \leq n}(X)\otimes \mathbb S)
        \ar[d,"\varphi"]
        \\
        \overline{\treemod}_{\pl, \leq n+1}(X) \otimes \mathbb S
        \ar[r,"\cong"]
        & (X  \comp_\pl {\treemod}_{\pl, \leq n}(X))\otimes \mathbb S~,
    \end{tikzcd}
    $$
    $$
    \begin{tikzcd}[column sep=2.5pc,row sep=3.5pc]
        (X \otimes \mathbb S) \comp_\pl ({\treemod}_{\pl, \leq n}(X)\otimes \mathbb S) \comp_\pl ({\treemod}_{\pl, \leq n}(X)\otimes \mathbb S)
        \ar[r,"\varphi ~\circ_\pl~\id"] \ar[d,"\id ~\circ_\pl~\varphi",swap]
        & ((X \comp_\pl{\treemod}_{\pl, \leq n}(X))\otimes \mathbb S) \comp_\pl ({\treemod}_{\pl, \leq n}(X)\otimes \mathbb S)
        \ar[d,"\varphi"]
        \\
        (X \otimes \mathbb S) \comp_\pl (({\treemod}_{\pl, \leq n}(X) \comp_\pl {\treemod}_{\pl, \leq n}(X))\otimes \mathbb S)
        \ar[r,"\varphi"]
        & (X  \comp_\pl {\treemod}_{\pl, \leq n}(X) \comp_\pl{\treemod}_{\pl, \leq n}(X))\otimes \mathbb S~,
    \end{tikzcd}
    $$
commute, it yields the result at level $n+1$.
\end{proof}

\begin{lemma}\label{lemmaconilpotentcooperadcomonad}
Let $X$ be a dg $\mathbb{N}$-module. The tree intertwining map $\tau$ is compatible with the comonad structure $\delta$ on the reduced tree comonad. That is, the following natural diagrams are commutative

    \[
    \begin{tikzcd}[column sep=2pc,row sep=2.5pc]
        \overline{\treemod}_\pl(X \otimes \mathbb S)
        \ar[rr,"\tau_X"] \ar[d,"\delta_{X \otimes \mathbb{S}}"']
        && \overline{\treemod}_\pl(X) \otimes \mathbb S 
        \ar[d,"\delta_{X} \otimes \mathbb{S}"]
        \\
        \overline{\treemod}_\pl\overline{\treemod}_\pl(X \otimes \mathbb S)
        \ar[r,"\overline{\treemod}_\pl(\tau_X)"]
        & \overline{\treemod}_\pl(\overline{\treemod}_\pl(X) \otimes \mathbb S)
        \ar[r,"\tau_{\overline{\treemod}_\pl(X)}"]
        & \overline{\treemod}_\pl\overline{\treemod}_\pl(X) \otimes \mathbb S~.
    \end{tikzcd}
    \begin{tikzcd}[column sep=2.5pc,row sep=2.5pc]
        \overline{\treemod}_\pl(X \otimes \mathbb S)
        \ar[r,"\tau_X"] \ar[rd,"\eta_{~\overline{\treemod}_\pl(X \otimes \mathbb S)}"']
        & \overline{\treemod}_\pl(X) \otimes \mathbb S 
        \ar[d,"\eta_{~\overline{\treemod}_\pl(X) \otimes \mathbb S}"]
        \\
        &X \otimes \mathbb S~.
    \end{tikzcd}
    \]
    
in the category of dg $\mathbb{N}$-modules.
\end{lemma}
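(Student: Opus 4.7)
The plan is to verify both diagrams by induction on the height of the underlying planar trees, in direct analogy with the proof of Lemma \ref{lemmaconilpotentcooperadlemmaone}. Concretely, for every $n \geq 1$ we precompose with the canonical inclusion $\overline{\treemod}_{\pl, \leq n}(X \otimes \mathbb{S}) \hookrightarrow \overline{\treemod}_{\pl}(X \otimes \mathbb{S})$ provided by Corollary \ref{corollarydecompositionplanarccooperad}, and check that the resulting squares commute at each level. This is sufficient because $\overline{\treemod}_\pl X$ is the filtered colimit of its level-truncations.

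For the counit (second) triangle, the argument is immediate. The counit of $\overline{\treemod}_\pl$ is the projection onto the corolla summand, while on trees of height one the tree intertwining map $\tau$ reduces to the identity of $X \otimes \mathbb{S}$. Hence both composites agree at every level, once we observe that the composite through $\tau_X$ followed by the counit kills all summands of height $\geq 2$ for the same combinatorial reason as the direct counit.

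For the coproduct (first) diagram, the base case $n=1$ is trivial by the same observation. For the inductive step from $n$ to $n+1$, we use the canonical isomorphisms
\[
\overline{\treemod}_{\pl,\leq n+1}(X\otimes \mathbb{S}) \cong (X \otimes \mathbb{S}) \comp_\pl \treemod_{\pl,\leq n}(X \otimes \mathbb{S}), \quad \overline{\treemod}_{\pl,\leq n+1}(X) \cong X \comp_\pl \treemod_{\pl,\leq n}(X),
\]
supplied by Corollary \ref{corollary: iso de la récurrence sur les arbres}. The square at level $n+1$ then decomposes into three successive cells: first, a cell asserting that the restriction of the comonad coproduct $\delta$ to height $\leq n+1$ factors as the cooperad partial decomposition $\Delta$ acting at the root, followed by $\delta$ applied to each subtree of height at most $n$; second, a cell expressing the compatibility of $\tau$ with $\Delta$, which is exactly Lemma \ref{lemmaconilpotentcooperadlemmaone}; third, a cell supplied by the induction hypothesis applied to the height-$\leq n$ subtrees. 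The remaining subsquares involve only the naturality of $\tau$, the interaction between the lax monoidal structure $\varphi$ and the planar composition product $\comp_\pl$, and the identification of the two descriptions of $\delta$.

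The main obstacle is the first cell above: unwinding $\delta$ into an iterated application of $\Delta$ requires carefully tracking partitions of a tree into non-trivial subtrees and verifying that grafting respects this factorization. Once this combinatorial step is made explicit, each remaining subsquare commutes either by naturality, by the induction hypothesis, by Lemma \ref{lemmaconilpotentcooperadlemmaone}, or by the coherence constraints of the lax monoidal functor $- \otimes \mathbb{S}$, so that assembling the cells yields the commutativity at level $n+1$ and closes the induction.
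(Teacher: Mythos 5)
Your proposal follows essentially the same route as the paper: both arguments dispose of the counit triangle directly, then prove the coproduct square by induction on tree height, reducing the level-$(n{+}1)$ square via the recursive isomorphism $\overline{\treemod}_{\pl,\leq n+1}(X\otimes \mathbb S) \cong (X \otimes \mathbb S) \comp_\pl \treemod_{\pl,\leq n}(X \otimes \mathbb S)$ to cells governed by Lemma \ref{lemmaconilpotentcooperadlemmaone}, the induction hypothesis, and the naturality and coherence of $\tau$ and $\varphi$. The decomposition of $\delta_{\leq n+1}$ as the root decomposition $\Delta_{\leq n+1}$ followed by $\delta_{\leq n}$ on the subtrees, which you flag as the main combinatorial step, is exactly the first vertical factorization in the paper's diagram.
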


\begin{proof}
The fact that the second diagram is commutative is clear. Let us prove that the first one is commutative.
As in the proof of Lemma \ref{lemmaconilpotentcooperadlemmaone}, we prove the result by an induction on the height of the trees. More precisely, it suffices to prove that the pre-compositions by the inclusion $\overline{\treemod}_{\pl, \leq n}(X \otimes \mathbb S) \hookrightarrow\overline{\treemod}_{\pl}(X \otimes \mathbb S)$ of the arrows in the square diagram are equal. For $n=1$, this result is clear.

\medskip

Let us assume that it is satisfied for up to a natural integer $n \geq 1$. Then, it is satisfied for $n+1$ since the following diagram is commutative

    {\scriptsize{
    $$
    \begin{tikzcd}[column sep=2.5pc,row sep=2.5pc]
        \overline{\treemod}_{\pl, \leq n+1}(X \otimes \mathbb S)
        \ar[rr,"(\tau_X)_{\leq n+1}"] \ar[d,"\Delta_{\leq n+1}",swap]
        && \overline{\treemod}_{\pl, \leq n+1}(X) \otimes \mathbb S 
        \ar[d,"\Delta_{\leq n+1} \otimes \mathbb{S}"]
        \\
        \overline{\treemod}_{\pl, \leq n+1}(X \otimes \mathbb S) \comp_\pl {\treemod}_{\pl, \leq n}(X \otimes \mathbb S)
        \ar[r,"(\tau_X)_{\leq n+1}\circ_\pl(\tau_X)_{\leq n}"]
        \ar[d,"\id ~ \circ_\pl ~ (\delta_{X \otimes \mathbb{S}})_{\leq n}",swap]
        &(\overline{\treemod}_{\pl, \leq n+1}(X) \otimes \mathbb S) \comp_\pl ({\treemod}_{\pl, \leq n}(X) \otimes \mathbb S)
        \ar[r,"\varphi"]\ar[d,"\id \circ_\pl (\delta_{\leq n} \otimes \mathbb{S})"]
        & ({\treemod}_{\pl, \leq n+1}(X)  \comp_\pl \overline{\treemod}_{\pl, \leq n}(X)) \otimes \mathbb S
        \ar[d,"(\id \circ_\pl \delta_{\leq n}) \otimes \mathbb{S}"]
        \\
        \overline{\treemod}_{\pl, \leq n+1}(X \otimes \mathbb S) \comp_\pl {\treemod}_{\pl, \leq n} \overline{\treemod}_{\pl, \leq n}(X \otimes \mathbb S)
        \ar[r,"(\tau_X)_{\leq n+1}\circ_\pl(\tau)_{\leq n}",swap] \ar[d]
        & (\overline{\treemod}_{\pl, \leq n+1}(X) \otimes \mathbb S) \comp_\pl ({\treemod}_{\pl, \leq n} \overline{\treemod}_{\pl, \leq n}(X) \otimes \mathbb S)
        \ar[r,"\varphi"]
        & ({\treemod}_{\pl, \leq n+1}(X)  \comp_\pl {\treemod}_{\pl, \leq n}\overline{\treemod}_{\pl, \leq n}(X)) \otimes \mathbb S
        \ar[d]
        \\
        \overline{\treemod}_{\pl, \leq n+1}\overline{\treemod}_{\pl, \leq n+1}(X \otimes \mathbb S)
        \ar[rr,"\tau_{\overline{\treemod}_{\pl, \leq n+1}(X)}~\overline{\treemod}_\pl((\tau_X)_{\leq n+1})"]
        && (\overline{\treemod}_{\pl, \leq n+1}\overline{\treemod}_{\pl, \leq n+1}(X)) \otimes \mathbb S~,
    \end{tikzcd}
    $$}}
    which is a consequence of Lemma \ref{lemmaconilpotentcooperadlemmaone} and of the induction hypothesis.
\end{proof}

\begin{lemma}\label{lemmatreecomonad}
Let $X$ be a dg $\mathbb{N}$-module. The monad structure $\mu$ of the functor $-\otimes \mathbb{S}$ is compatible with the comonad structure $\delta$ on the reduced tree comonad and the tree intertwining map $\tau$. That is, the following diagram is commutative
    \[
    \begin{tikzcd}[column sep=3.5pc,row sep=2.5pc]
        \overline{\treemod}(X \otimes \mathbb S \otimes \mathbb S)
        \ar[r,"\nu_{X \otimes \mathbb{S}}"] \ar[d,"\overline{\treemod}(\mu_X)"']
        & \overline{\treemod}_\pl(X \otimes \mathbb S )\otimes \mathbb S
        \ar[r,"\delta_{X \otimes \mathbb S}"] \ar[d,"\mu_{\overline{\treemod}_\pl(X)}~(\tau_X \otimes \mathbb S)"]
        & \overline{\treemod}_\pl\overline{\treemod}_\pl(X \otimes \mathbb S) \otimes \mathbb S
        \ar[r,"\overline{\treemod}(\nu^{-1})~ \nu^{-1}_{\overline{\treemod}_\pl}"] \ar[d,"\mu_{\overline{\treemod}_\pl \overline{\treemod}_\pl}~\tau_{\overline{\treemod}_\pl}~\overline{\treemod}_\pl(\tau)"]
        & \overline{\treemod}\overline{\treemod}(X \otimes \mathbb S \otimes \mathbb S)
        \ar[d,"\overline{\treemod}\overline{\treemod}(\mu_X)"]
        \\
        \overline{\treemod}(X \otimes \mathbb S)
        \ar[r,"\nu_{X}"']
        & \overline{\treemod}_\pl(X)\otimes \mathbb S
        \ar[r,"\delta_X \otimes \mathbb{S}"']
        & \overline{\treemod}_\pl\overline{\treemod}_\pl(X)\otimes \mathbb S
        \ar[r,"\overline{\treemod}(\nu^{-1})~ \nu^{-1}_{\overline{\treemod}_\pl}"']
        & \overline{\treemod}\overline{\treemod}(X \otimes \mathbb S).
    \end{tikzcd}
    \]
    in the category of dg $\mathbb{N}$-modules.
\end{lemma}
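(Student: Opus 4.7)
The plan is to decompose the large rectangle into its three horizontally adjacent squares and verify commutativity of each separately, relying on Lemmas \ref{lemmaconilpotentcooperadlemmaone} and \ref{lemmaconilpotentcooperadcomonad} already established, together with the naturality and unit/product axioms of the monad $-\otimes \mathbb S$ and the comonad $\overline{\treemod}_\pl$.

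First I would treat the leftmost square, which is essentially a naturality statement for $\nu$ with respect to $\mu_X$. Since $\nu_X \colon \treemod(X \otimes \mathbb S) \xrightarrow{\cong} \treemod_\pl(X) \otimes \mathbb S$ is obtained, by the adjoint lifting argument, as the canonical comparison between the free dg operad on $X \otimes \mathbb S$ and the tensored free planar operad, the map $\nu_X$ fits into the universal property through the exchange map $\tau_X$. The claim that the square commutes then amounts to saying that applying $-\otimes \mathbb S \otimes \mathbb S \Rightarrow -\otimes \mathbb S$ via $\mu$ inside $\treemod$ coincides, after passage through $\nu$, with first using $\tau$ (to pass the extra $\mathbb S$-factor out of the tree structure) and then multiplying with $\mu$; this is a direct consequence of the construction of $\nu$ as a coequaliser presentation, and the analogous square for the rightmost cell (involving $\nu^{-1}$ twice) follows by the same argument applied to $\overline{\treemod}\overline{\treemod}$, together with naturality of $\tau$ in its argument.

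The middle square is the core content. Pre-composing with $\nu_{X\otimes \mathbb S}$ turns it into a statement purely in the planar world: one has to check that
\[
(\delta_X \otimes \mathbb S) \circ \mu_{\overline{\treemod}_\pl(X)} \circ (\tau_X \otimes \mathbb S) = \mu_{\overline{\treemod}_\pl \overline{\treemod}_\pl(X)} \circ \tau_{\overline{\treemod}_\pl(X)} \circ \overline{\treemod}_\pl(\tau_X) \circ \delta_{X\otimes \mathbb S},
\]
as maps from $\overline{\treemod}_\pl(X \otimes \mathbb S) \otimes \mathbb S$ to $\overline{\treemod}_\pl \overline{\treemod}_\pl(X) \otimes \mathbb S$. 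The left-hand side can be rewritten, by naturality of $\mu$ applied to the morphism $\delta_X$, as $\mu_{\overline{\treemod}_\pl\overline{\treemod}_\pl(X)} \circ (\delta_X \otimes \mathbb S \otimes \mathbb S)\circ (\tau_X \otimes \mathbb S)$. Lemma \ref{lemmaconilpotentcooperadcomonad} applied to $X$ gives the identity $(\delta_X \otimes \mathbb S)\circ \tau_X = \tau_{\overline{\treemod}_\pl(X)} \circ \overline{\treemod}_\pl(\tau_X) \circ \delta_{X \otimes \mathbb S}$; tensoring on the right by $\mathbb S$ and substituting reduces the verification to the equality $\mu_{\overline{\treemod}_\pl \overline{\treemod}_\pl(X)}\circ (\mathrm{id} \otimes \mu)$ type identity coming from the associativity of the monad structure of $- \otimes \mathbb S$ together with naturality of $\mu$ along the morphism $\tau_{\overline{\treemod}_\pl(X)}\circ \overline{\treemod}_\pl(\tau_X)$.

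The main obstacle will be bookkeeping for the middle square: there are four arrows of type $\tau$, two of type $\mu$, and one of type $\delta$ involved, and one must be careful to insert the correct naturality squares in the correct order. I expect the cleanest presentation to be an induction on the height of trees, mirroring the strategy used in the proofs of Lemmas \ref{lemmaconilpotentcooperadlemmaone} and \ref{lemmaconilpotentcooperadcomonad}: restrict everything to $\overline{\treemod}_{\pl,\leq n+1}(X\otimes \mathbb S)\otimes \mathbb S \cong (X\otimes \mathbb S)\comp_\pl \treemod_{\pl,\leq n}(X\otimes \mathbb S)\otimes \mathbb S$, use the inductive hypothesis on the right factor, and then invoke the two previous lemmas on the level-one piece. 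Once that diagram is shown to commute level-wise, the result follows by passing to the colimit over heights.
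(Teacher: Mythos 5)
Your proposal is correct and follows essentially the same route as the paper: the same decomposition into three squares, with the left square coming from the coequaliser/free-operad construction of $\nu$ (this is Proposition \ref{corollarytreemodcommutation} in the paper), the middle square from Lemma \ref{lemmaconilpotentcooperadcomonad} combined with naturality of $\mu$, and the right square by reapplying the left-square argument to $\overline{\treemod}_\pl(X)$ and under $\overline{\treemod}(-)$. Your explicit naturality computation for the middle square is complete as written, so the closing suggestion of a further induction on tree height is unnecessary.
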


\begin{proof}
The commutation of the left side square is a consequence of Proposition \ref{corollarytreemodcommutation}. The commutation of the middle square is a consequence of Lemma \ref{lemmaconilpotentcooperadcomonad}. The right square decomposes into the following diagram
    $$
    \begin{tikzcd}[column sep=2.5pc,row sep=2.5pc]
        \overline{\treemod}_\pl\overline{\treemod}_\pl(X \otimes \mathbb S )\otimes \mathbb S
        \ar[r,"\nu^{-1}_{\overline{\treemod}_\pl}"] \ar[d,"\overline{\treemod}_\pl(\tau)",swap]
        & \overline{\treemod}(\overline{\treemod}_\pl(X \otimes \mathbb S) \otimes \mathbb S)
        \ar[d,"\overline{\treemod}(\tau)"] \ar[r,"\overline{\treemod}(\nu^{-1})"]
        & \overline{\treemod}\overline{\treemod}(X \otimes \mathbb S \otimes \mathbb S)
        \ar[dd,"\overline{\treemod}\overline{\treemod}(\mu_X)"]
        \\
        \overline{\treemod}_\pl(\overline{\treemod}_\pl(X) \otimes \mathbb S )\otimes \mathbb S
        \ar[r,"\nu^{-1}_{\overline{\treemod}_\pl}"] \ar[d,"\mu_{\overline{\treemod}_\pl \overline{\treemod}_\pl}~\tau_{\overline{\treemod}_\pl}",swap]
        & \overline{\treemod}(\overline{\treemod}_\pl(X) \otimes \mathbb S \otimes \mathbb S )
        \ar[d,"\overline{\treemod}(\mu_{\overline{\treemod}_\pl})"]
        \\
        \overline{\treemod}_\pl\overline{\treemod}_\pl(X)\otimes \mathbb S
        \ar[r,"\nu^{-1}_{\overline{\treemod}_\pl}"]
        & \overline{\treemod}(\overline{\treemod}_\pl(X) \otimes \mathbb S )
        \ar[r,"\overline{\treemod}(\nu^{-1})"]
        & \overline{\treemod}\overline{\treemod}(X \otimes \mathbb S)~,
    \end{tikzcd}
    $$
    which is commutative. Indeed, its top left cell is clearly commutative, its bottom left cell is the left square in the lemma applied to $\overline{\treemod}_\pl(X)$, and its right cell is the image by $\overline{\treemod}(-)$ of the same left square in the lemma.
\end{proof}

\begin{theorem}
The reduced tree endofunctor $\overline{\treemod}$ on the category of dg $\mathbb S$-module has the canonical structure of a comonad. For a dg $\mathbb{S}$-module $M$, 

\begin{enumerate}
\item its counit $\epsilon: \overline{\treemod}(M) \longrightarrow M$ is the canonical projection into the corollas,

\medskip

\item its coproduct $\delta_M: \overline{\treemod}(M) \longrightarrow \overline{\treemod}\overline{\treemod}(M)$ is the unique arrow which makes the following diagram 
 \[
    \begin{tikzcd}[column sep=3.25pc,row sep=3.5pc]
        \overline{\treemod}(M)
        \ar[rrr,"\delta_M"]
        &&&
        \overline{\treemod}\overline{\treemod}(M)
        \\
        \overline{\treemod}(\mathrm{U}(M) \otimes \mathbb{S})
        \ar[u,"\overline{\treemod}(\pi_M)"] \ar[r,"\nu_{\mathrm{U}(M)}"]
        & \overline{\treemod}_\pl(\mathrm{U}(M)) \otimes \mathbb S
        \ar[r,"\delta_{\mathrm{U}(M)} \otimes \mathbb{S}"]
        &\overline{\treemod}_\pl\overline{\treemod}_\pl(\mathrm{U}(M)) \otimes \mathbb S
        \ar[r,"\overline{\treemod}(\nu^{-1})~ \nu^{-1}_{\overline{\treemod}_\pl}"]
        &\overline{\treemod}\overline{\treemod}(\mathrm{U}(M) \otimes \mathbb{S}).
        \ar[u,"\overline{\treemod}\overline{\treemod}(\pi_M)"']
    \end{tikzcd}
    \]
commute, where $\pi_M: \mathrm{U}(M) \otimes \mathbb{S} \longrightarrow M$ is the map given by the dg $\mathbb{S}$-module structure.
\end{enumerate}
\end{theorem}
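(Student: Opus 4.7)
The plan is to transfer the comonad structure from $\overline{\treemod}_\pl$ on dg $\mathbb{N}$-modules to $\overline{\treemod}$ on dg $\mathbb{S}$-modules, exploiting the natural isomorphism $\nu: \treemod(-\otimes \mathbb{S}) \xrightarrow{\cong} \treemod_\pl(-) \otimes \mathbb{S}$ of Proposition \ref{corollarygiso} (and its restriction to the reduced sub-groupoid), together with the fact, recorded in Proposition \ref{corollarytreemodcommutation}, that $\overline{\treemod}(M)$ is presented as a coequalizer of free dg $\mathbb{S}$-modules coming from the resolution $\mathrm{U}(M) \otimes \mathbb{S} \otimes \mathbb{S} \rightrightarrows \mathrm{U}(M) \otimes \mathbb{S} \twoheadrightarrow M$.

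First, I would check that the proposed composite
\[
\overline{\treemod}(\mathrm{U}(M) \otimes \mathbb{S}) \xrightarrow{\nu} \overline{\treemod}_\pl(\mathrm{U}(M)) \otimes \mathbb{S} \xrightarrow{\delta \otimes \mathbb{S}} \overline{\treemod}_\pl\overline{\treemod}_\pl(\mathrm{U}(M)) \otimes \mathbb{S} \xrightarrow{\nu^{-1}} \overline{\treemod}\overline{\treemod}(\mathrm{U}(M) \otimes \mathbb{S}),
\]
post-composed with $\overline{\treemod}\overline{\treemod}(\pi_M)$, coequalizes the relevant reflexive pair; the universal property of $\overline{\treemod}(M)$ then produces the unique dashed arrow $\delta_M$ of the theorem. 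This well-definedness is precisely the content of Lemma \ref{lemmatreecomonad} applied to $X = \mathrm{U}(M)$: that lemma matches the two legs of the reflexive pair after applying the proposed comultiplication on the free side. Naturality of $\delta_M$ in $M$ then follows from the universal property combined with the naturality of $\nu$, of the planar comultiplication, and of $\pi_M$.

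Next, I would verify the counit and coassociativity axioms. On a free $\mathbb{S}$-module $M = X \otimes \mathbb{S}$, the isomorphism $\nu$ identifies $\delta_{X \otimes \mathbb{S}}$ with $\delta^\pl_X \otimes \mathbb{S}$ and $\epsilon_{X \otimes \mathbb{S}}$ with $\epsilon^\pl_X \otimes \mathbb{S}$, so both axioms reduce directly to those of the planar comonad $\overline{\treemod}_\pl$ established in Section 1.3, tensored with $\mathrm{id}_\mathbb{S}$. For a general $M$, each axiom is an equality between two arrows out of $\overline{\treemod}(M)$; since the latter is a coequalizer, it suffices to check the equality after pre-composition with the canonical quotient map from $\overline{\treemod}(\mathrm{U}(M) \otimes \mathbb{S})$, which reduces matters to the free case already handled.

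The main obstacle is the first step: verifying that the proposed formula for $\delta_M$ descends through the defining coequalizer requires carefully threading the interchange maps $\tau$, $\nu$, $\mu$ and $\varphi$ across the four functors $\overline{\treemod}$, $\overline{\treemod}_\pl$ and their compositions. Fortunately, all the necessary compatibilities have been packaged into Lemmas \ref{lemmaconilpotentcooperadlemmaone}, \ref{lemmaconilpotentcooperadcomonad} and \ref{lemmatreecomonad}, so the argument ultimately reduces to a (large but) formal diagram chase.
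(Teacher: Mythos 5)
Your proposal is correct and follows essentially the same route as the paper: define $\delta_M$ on the free resolution via $\nu$ and the planar coproduct, use Lemma \ref{lemmatreecomonad} to descend through the coequalizer, and reduce the counit and coassociativity axioms to the planar case by pre-composing with the epimorphism $\overline{\treemod}(\mathrm{U}(M)\otimes\mathbb{S}) \twoheadrightarrow \overline{\treemod}(M)$. No substantive differences from the paper's argument.
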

    
\begin{proof}
The natural map 
\[
\begin{tikzcd}[column sep=3.5pc,row sep=2.5pc]
        \overline{\treemod}(\mathrm{U}(M) \otimes \mathbb{S})
        \ar[r,"\nu_{\mathrm{U}(M)}"]
        & \overline{\treemod}_\pl(\mathrm{U}(M)) \otimes \mathbb S
        \ar[r,"\delta_{\mathrm{U}(M)} \otimes \mathbb{S}"]
        &\overline{\treemod}_\pl\overline{\treemod}_\pl(\mathrm{U}(M)) \otimes \mathbb S
        \ar[r,"\overline{\treemod}(\nu^{-1})~ \nu^{-1}_{\overline{\treemod}_\pl}"]
        &\overline{\treemod}\overline{\treemod}(\mathrm{U}(M) \otimes \mathbb{S})
\end{tikzcd}
\]
induces a morphism of diagrams
\[
\left(\begin{tikzcd}
    \overline{\treemod}(\mathrm{U}(M) \otimes \mathbb S \otimes \mathbb S)
    \ar[d,"\overline{\treemod}(\pi_M)", shift left=1.1ex] \ar[d,"\overline{\treemod}(\mu_{\mathrm{U}(M)})",swap,shift right=1.1ex]
    \\
    \overline{\treemod}(\mathrm{U}(M) \otimes \mathbb S)
\end{tikzcd}\right)
\longrightarrow
\left(\begin{tikzcd}
    \overline{\treemod}\overline{\treemod}(\mathrm{U}(M) \otimes \mathbb S \otimes \mathbb S)
    \ar[d,"\overline{\treemod}\overline{\treemod}(\pi_M)", shift left=1.1ex] \ar[d,"\overline{\treemod}\overline{\treemod}(\mu_{\mathrm{U}(M)})", swap, shift right=1.1ex]
    \\
    \overline{\treemod}\overline{\treemod}(\mathrm{U}(M) \otimes \mathbb S)
\end{tikzcd}\right)
\]
as a consequence of Lemma \ref{lemmatreecomonad}. Taking the colimits of these diagrams, one obtains a natural coproduct map
\[
\delta_M: \overline{\treemod}(M) \longrightarrow \overline{\treemod}\overline{\treemod}(M)~.
\]
It remains to show that this coproduct map and the counit endow the endofunctor $\overline{\treemod}(-)$ with a comonad structure.

\medskip

The counitality is straightforward to check. Let us prove that the coproduct map is coassociative. Since $\overline{\treemod}(-)$ preserves epimorphisms and since the map $\mathrm{U}(M) \otimes \mathbb S \longrightarrow M$ is an epimorphism, it suffices to prove that 
the following square diagram
\[
\begin{tikzcd}[column sep=2.5pc,row sep=2.5pc]
    \overline{\treemod}(\mathrm{U}(M) \otimes \mathbb S)
    \ar[r, "\delta_{\mathrm{U}(M) \otimes \mathbb S}"] \ar[d, "\delta_{\mathrm{U}(M) \otimes \mathbb S}"']
    & \overline{\treemod} \overline{\treemod}(\mathrm{U}(M) \otimes \mathbb S)
    \ar[d, "\delta_{\overline{\treemod}(\mathrm{U}(M) \otimes \mathbb S)}"]
    \\
    \overline{\treemod} \overline{\treemod}(\mathrm{U}(M) \otimes \mathbb S)
    \ar[r, "\overline{\treemod}(\delta_{\mathrm{U}(M) \otimes \mathbb S})"']
    & \overline{\treemod}\overline{\treemod}\overline{\treemod}(\mathrm{U}(M) \otimes \mathbb S)
\end{tikzcd}
\]
is commutative. This follows from the coassociativity of coproduct map $\delta_\pl: \overline{\treemod}_\pl \longrightarrow \overline{\treemod}_\pl\overline{\treemod}_\pl$ and from the fact that the natural isomorphism
\[
\nu_{\mathrm{U}(M)}^{-1}: \overline{\treemod}(\mathrm{U}(M))_\pl \otimes \mathbb S \longrightarrow \overline{\treemod}(\mathrm{U}(M) \otimes \mathbb S)
\]
is suitably compatible with the coproducts.
\end{proof}

\begin{remark}
One can notice that the natural isomorphism 
\[
\nu: \overline{\treemod}(- \otimes \mathbb S) \longrightarrow \overline{\treemod}(-)_\pl \otimes \mathbb S
\]
between functors from the category of dg $\mathbb{N}$-modules to the category of dg $\mathbb S$-modules is a comonad transformation above the functor $- \otimes \mathbb S$. More precisely, this amounts to the following diagrams commuting
\[
    \begin{tikzcd}
        \overline{\treemod}(-)_\pl \otimes \mathbb S
        \ar[r] \ar[d]
        & \overline{\treemod}(- \otimes \mathbb S)
        \ar[d]
        \\
        \overline{\treemod}\overline{\treemod}(-)_\pl \otimes \mathbb S
        \ar[r]
        & \overline{\treemod}\overline{\treemod}(- \otimes \mathbb S)
    \end{tikzcd}
    \quad 
    \begin{tikzcd}
        \overline{\treemod}(-)_\pl \otimes \mathbb S
        \ar[r] \ar[rd]
        & \overline{\treemod}(- \otimes \mathbb S)
        \ar[d]
        \\
        & - \otimes \mathbb S~.
    \end{tikzcd}
\]
This induces by (co)doctrinal adjunction the following corollary.
\end{remark}

\begin{corollary}\label{corollaryadjunctionconilp}
The adjunction 
\[
\begin{tikzcd}[column sep=3.5pc,row sep=0.5pc]
          \mathsf{dg}~\mathbb{N}\text{-}\mathsf{mod} \arrow[r, shift left=1.1ex, "- \otimes \mathbb S"{name=A}]
          &\mathsf{dg}~\mathbb{S}\text{-}\mathsf{mod}  \arrow[l, shift left=.75ex, "\mathrm{U}"{name=B}]
             \arrow[phantom, from=A, to=B, , "\dashv" rotate=90]
\end{tikzcd}
\]
lifts canonically to an adjunction 
\[
\begin{tikzcd}[column sep=3.5pc,row sep=0.5pc]
          \mathsf{dg}~\overline{\treemod}_\pl\text{-}\mathsf{mod} \arrow[r, shift left=1.1ex, "- \otimes \mathbb S"{name=A}]
          &\mathsf{dg}~\overline{\treemod}\text{-}\mathsf{cog} ~. \arrow[l, shift left=.75ex, "U_{\mathbb S}"{name=B}]
             \arrow[phantom, from=A, to=B, , "\dashv" rotate=90]
\end{tikzcd}
\]
\end{corollary}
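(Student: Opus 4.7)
The result is an application of the standard doctrinal adjunction principle for comonads. The required hypothesis is exactly what the preceding remark supplies: the natural isomorphism
\[
\nu: \overline{\treemod}(- \otimes \mathbb{S}) \xrightarrow{\cong} \overline{\treemod}_\pl(-) \otimes \mathbb{S}
\]
is an invertible comonad transformation above the functor $- \otimes \mathbb{S}$. The plan is as follows.

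First, I would use $\nu^{-1}$, which inherits from $\nu$ the status of an invertible comonad morphism, to construct the lift of the left adjoint explicitly. A dg $\overline{\treemod}_\pl$-coalgebra $(X, \delta_X)$ is sent to $(X \otimes \mathbb{S}, \delta)$, where the comultiplication $\delta$ is the composite
\[
X \otimes \mathbb{S} \xrightarrow{\delta_X \otimes \mathbb{S}} \overline{\treemod}_\pl(X) \otimes \mathbb{S} \xrightarrow{\nu^{-1}_X} \overline{\treemod}(X \otimes \mathbb{S})~.
\]
The coassociativity and counitality of $\delta$ follow directly from the two commuting squares listed in the remark, which express that $\nu$ intertwines the comonad structures. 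On morphisms, the lift is tautological since $- \otimes \mathbb{S}$ is already functorial and $\nu^{-1}$ is natural.

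Second, I would produce the right adjoint as a lift of $U$. Both coalgebra categories in the statement are locally presentable: indeed, $\overline{\treemod}_\pl$ and $\overline{\treemod}$ are accessible comonads on presentable categories, as in Proposition \ref{proppresentabilityofplanarconilp}. The lifted left adjoint $\widetilde{- \otimes \mathbb{S}}$ preserves colimits, since these are created by the forgetful functors to the underlying module categories, where $- \otimes \mathbb{S}$ is itself a left adjoint. The adjoint lifting theorem recalled in \cite[Appendix A]{premierpapier}, applied to the commutative square of forgetful functors together with $\widetilde{- \otimes \mathbb{S}}$, then yields the desired right adjoint $U_\mathbb{S}$, which automatically commutes with the forgetful functors to modules and hence lifts $U$.

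The main obstacle is conceptual rather than technical: one has to unwind the definitions and trust that the compatibility established in the previous lemmas and remark is exactly what doctrinal adjunction requires. Since $\nu$ is invertible, no subtleties about lax versus oplax morphisms arise, and no further hypotheses need to be verified; the bulk of the work was already done in establishing that $\nu$ is compatible with both the counit and the comultiplication of the two reduced tree comonads.
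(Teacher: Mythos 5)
Your first step is exactly the paper's: the lift of $-\otimes\mathbb{S}$ sends $(X,\delta_X)$ to $X\otimes\mathbb{S}$ with structure map $\nu^{-1}_X\circ(\delta_X\otimes\mathbb{S})$, and coassociativity and counitality follow from the compatibility squares in the preceding remark. The second half, however, has the adjunction the wrong way round. The adjunction being lifted is $\mathrm{U}\dashv -\otimes\mathbb{S}$ (the cofree $\mathbb{S}$-module adjunction, as for the cooperad-level adjunction $\mathrm{U}_{\mathbb{S}}\dashv -\otimes\mathbb{S}$ stated earlier in the paper and as is natural for categories of coalgebras over comonads). So in the lifted adjunction $-\otimes\mathbb{S}$ is the \emph{right} adjoint and $\mathrm{U}_{\mathbb{S}}$ the \emph{left} adjoint. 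Your plan --- show that the lifted $-\otimes\mathbb{S}$ preserves colimits and extract a right adjoint to it from presentability or the adjoint lifting theorem --- therefore produces the wrong adjunction: a right adjoint \emph{to} $-\otimes\mathbb{S}$, not the asserted left adjoint $\mathrm{U}_{\mathbb{S}}$ lifting $\mathrm{U}$.

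Independently of handedness, the assertion that an adjoint obtained from the adjoint lifting theorem ``automatically commutes with the forgetful functors to modules and hence lifts $\mathrm{U}$'' is unjustified and false in general: adjoints produced this way between coalgebra categories are cofree-type constructions and typically do not commute with the underlying forgetful functors. The substance of the corollary is precisely that $\mathrm{U}$ itself lifts, and the paper establishes this by writing down the induced $\overline{\treemod}_\pl$-coalgebra structure on $\mathrm{U}(M)$ explicitly, as the composite of $\mathrm{U}(\delta_M)$, the unit of $\mathrm{U}\dashv -\otimes\mathbb{S}$, the isomorphism $\nu$, and the action map $\pi_M$ --- that is, the mate of $\nu$ under doctrinal adjunction, which gives a genuine lift because $\nu$ is invertible. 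Once that second functor is written down, no presentability or abstract existence argument is needed; what remains is the (routine but necessary) check that the two explicit lifts are adjoint to one another.
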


\begin{proof}
Let $(X,\delta_X)$ be a dg $\overline{\treemod}_\pl$-coalgebra. The dg $\mathbb{S}$-module inherits a dg $\overline{\treemod}$-coalgebra structure given by
\[
\begin{tikzcd}[column sep=2.5pc,row sep=0.5pc]
X \otimes \mathbb{S} \arrow[r,"\delta_X \otimes \mathbb{S}"]
&\overline{\treemod}_\pl (X)  \otimes \mathbb S \arrow[r,"\nu_X"]
&\overline{\treemod} (X \otimes \mathbb S)~.
\end{tikzcd}
\]
Let $(M,\delta_M)$ be a dg $\overline{\treemod}$-coalgebra. The dg $\mathbb{N}$-module $\mathrm{U}(M)$ inherits a dg $\overline{\treemod}_\pl$-coalgebra given by 
\[
\begin{tikzcd}[column sep=2.5pc,row sep=0.5pc]
\mathrm{U}(M) \arrow[r,"\mathrm{U}(\delta_M)"]
&\mathrm{U}(\overline{\treemod}(M)) \arrow[r,"\overline{\treemod}(\eta_M)"]
&\mathrm{U}(\overline{\treemod} (\mathrm{U}(M)  \otimes \mathbb S)) \arrow[r,"\nu_{\mathrm{U}(M)}"]
&\overline{\treemod}_\pl (\mathrm{U}(M)) \otimes \mathbb S) \arrow[r,"\overline{\treemod}_\pl (\pi_M)"]
&\overline{\treemod}_\pl (\mathrm{U}(M))~.
\end{tikzcd}
\]
One can check that these two natural constructions produce adjoint functors.
\end{proof}

\begin{corollary}
For every dg $\overline{\treemod}$-coalgebra $(W,\delta_W)$, the following square
    \[
    \begin{tikzcd}[column sep=2.5pc,row sep=2.5pc]
        W
        \ar[r,"\delta_W"] \ar[d,"\delta_W",swap]
        & \overline{\treemod} W
        \ar[r, two heads]
        & \overline{\treemod}_{\leq 2} W
        \ar[r, "\cong"]
        & W \comp (\operad I \oplus W)
        \ar[d,rightarrowtail]
        \\
        \overline{\treemod} W
        &&&  W \comp {\treemod} W.
        \ar[lll, "\cong",swap]
    \end{tikzcd}
    \]
is a commutative diagram.
\end{corollary}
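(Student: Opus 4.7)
The plan is to reduce the commutativity of this square to its planar counterpart, Lemma \ref{lemma: T-cog implique comonoide pour le circ}, in two steps. First I would treat the case when $W = X \otimes \mathbb{S}$ is the $\overline{\treemod}$-coalgebra obtained from a dg $\overline{\treemod}_\pl$-coalgebra $(X,\delta_X)$ via the adjunction of Corollary \ref{corollaryadjunctionconilp}. Using the isomorphism $\nu$ of Proposition \ref{corollarygiso} (applied to the full sub-groupoids of non-trivial trees and of trees of height at most $2$) together with the isomorphism $\psi$ of Proposition \ref{lemmacompositioniso} (and the identity $\mathcal{I} \cong \mathcal{I} \otimes \mathbb{S}$), every object appearing in the square for such a $W$ is identified with the corresponding planar object tensored with $\mathbb{S}$. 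The task is then to check that under these identifications each structural arrow of the square -- the coproduct $\delta_W$, the projection onto $\overline{\treemod}_{\leq 2}$, the decomposition isomorphism $\overline{\treemod}_{\leq 2} W \cong W \comp (\mathcal{I} \oplus W)$, and the isomorphism $W \comp \treemod W \cong \overline{\treemod} W$ of Corollary \ref{corollary: iso de la récurrence sur les arbres} -- is precisely the tensor with $\mathbb{S}$ of its planar analogue. Granted this, the square for $W$ is the planar square of Lemma \ref{lemma: T-cog implique comonoide pour le circ} tensored with $\mathbb{S}$, hence commutes.

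Next I would extend the result to an arbitrary dg $\overline{\treemod}$-coalgebra $(W,\delta_W)$ by appealing to the counit $\pi_W: \mathrm{U}(W) \otimes \mathbb{S} \longrightarrow W$ of the adjunction of Corollary \ref{corollaryadjunctionconilp}: this is a morphism of $\overline{\treemod}$-coalgebras whose source carries the coalgebra structure induced by the planar one on $\mathrm{U}(W)$, and in each arity $n \geq 0$ it is the surjective $\mathbb{S}_n$-action map, so it is an epimorphism in dg $\mathbb{S}$-modules. Since every arrow in the square is natural in $W$ as a $\overline{\treemod}$-coalgebra (including $\delta_W$ itself), the map $\pi_W$ induces a morphism from the square for $\mathrm{U}(W) \otimes \mathbb{S}$ to the square for $W$ whose components are all epimorphisms. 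As the source square commutes by the first step, the target square commutes too.

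The main obstacle will be the bookkeeping in the first step: the patient verification that, under $\nu$, each structural map of the symmetric square coincides with its planar analogue tensored with $\mathbb{S}$. This ultimately rests on the fact, already used in the construction of the comonad structure on $\overline{\treemod}$, that $\nu$ is a comonad transformation above the functor $- \otimes \mathbb{S}$, together with the compatibility of $\psi$ with the composition product and with the isomorphism of Corollary \ref{corollary: iso de la récurrence sur les arbres}. Once these coherences are assembled, the reduction to the planar lemma is formal and the second step is immediate.
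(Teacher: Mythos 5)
Your strategy --- establish the square for coalgebras of the form $X\otimes\mathbb S$ by identifying it with the planar square of Lemma~\ref{lemma: T-cog implique comonoide pour le circ} tensored with $\mathbb S$, then descend to a general $W$ along the epimorphism $\pi_W$ --- is genuinely different from the paper's proof, which simply reruns the planar diagram chase inside the symmetric category: the decomposition used for Lemma~\ref{lemma: T-cog implique comonoide pour le circ} only uses the comonad coproduct, the coalgebra axiom for $(W,\delta_W)$, naturality of the truncation, and the isomorphisms of Corollary~\ref{corollary: iso de la récurrence sur les arbres}, all of which are available verbatim for $\overline{\treemod}$ once its comonad structure has been constructed. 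Your first step is fine in principle (and your choice of the epimorphism $\pi_W$ over the monomorphic unit $W \to \mathrm{U}(W)\otimes\mathbb S$ is the right one, since in positive characteristic $\overline{\treemod}(-)$ need not preserve monomorphisms of $\mathbb S$-modules, whereas only $\pi_W$ itself needs to be epi for your argument).

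The gap is in the second step: the claim that $\pi_W\colon \mathrm{U}(W)\otimes\mathbb S\to W$ is a morphism of $\overline{\treemod}$-coalgebras is asserted but not justified, and it is exactly the statement doing all the work --- it is the ``naturality of $\delta_W$ along $\pi_W$'' that you invoke. It does not follow formally from anything in the paper: $\pi_W$ is neither the unit nor the counit of the adjunction of Corollary~\ref{corollaryadjunctionconilp} (there $-\otimes\mathbb S$ is the \emph{right} adjoint, so the counit lives on the planar side and the unit is the coaction $W\to \mathrm{U}(W)\otimes\mathbb S$). Contrast this with the coassociativity argument in the theorem constructing the comonad $\overline{\treemod}$, where descending along the epimorphism $\mathrm{U}(M)\otimes\mathbb S\to M$ is legitimate because the comonad coproduct is a natural transformation of endofunctors and is therefore automatically compatible with $\pi_M$; here $\delta_W$ is a coalgebra structure map, and its compatibility with $\pi_W$ --- where the source carries the coalgebra structure induced from the planar structure on $\mathrm{U}(W)$ built in the proof of Corollary~\ref{corollaryadjunctionconilp} --- must itself be proved. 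It appears to be true, but checking it requires unwinding that induced structure together with the identity expressing $\overline{\treemod}(\pi_W)\circ\nu^{-1}_{\mathrm{U}(W)}$ as the coequalizer projection (Proposition~\ref{corollarytreemodcommutation}), a verification of roughly the same length as the direct diagram chase you are trying to avoid. Either supply that verification or follow the paper's route.
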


\begin{proof}
This follows from the same arguments, \textit{mutatis mutandis}, as those used in the planar context for the proof of Lemma \ref{lemma: T-cog implique comonoide pour le circ}.
\end{proof}

\subsection{Conilpotent cooperads}
Let $(W,\delta_W)$ be a dg $\overline{\treemod}$-coalgebra. We consider the map
\[
\Delta_W: \operad I \oplus W \longrightarrow (\operad I \oplus W)\comp (\operad I \oplus W)
\]
defined as the sum of the maps

\medskip

\begin{tikzcd}[column sep=1.5pc,row sep=0.5pc]
    \operad I \oplus W \arrow[r,twoheadrightarrow]
    &W \arrow[r,"\delta_W"]
    &\overline{\treemod} W \arrow[r,twoheadrightarrow]
    &\overline{\treemod}_{\leq 2} W \cong W \comp (\operad I \oplus W) \arrow[r,rightarrowtail]
    &(\operad I \oplus W)\comp (\operad I \oplus W)~,
\end{tikzcd}

\begin{tikzcd}[column sep=2pc,row sep=0.5pc]
    \operad I \oplus W \arrow[r,twoheadrightarrow]
    &W \cong \operad I \comp W \arrow[r,rightarrowtail]
    &(\operad I \oplus W)\comp (\operad I \oplus W)~,
\end{tikzcd}

\begin{tikzcd}[column sep=2pc,row sep=0.5pc]
    \operad I \oplus W \arrow[r,twoheadrightarrow]
    &\operad I \arrow[r]
    &\operad I \comp \operad I~.
\end{tikzcd}

\medskip

Together with the counit $\epsilon_W: \operad I \oplus W \twoheadrightarrow \operad I$ and the coaugmentation $\mu_W: \operad I \rightarrowtail \operad I \oplus W$, they form a coaugmented cooperad structure on the dg $\mathbb S$-module $\operad I \oplus W$.

\medskip

This defines a functor 
\[
\mathrm{Conil}: \mathsf{dg}~\overline{\treemod}\text{-}\mathsf{cog} \longrightarrow (\dgcooperads)_{\operad I/}
\]
from dg $\overline{\treemod}$-coalgebras to coaugmented dg cooperads.

\begin{proposition}\label{propositionplanarccooperadff}
    The functor $\mathrm{Conil}$ from dg $\overline{\treemod}$-coalgebras to coaugmented dg cooperads
    is fully faithful.
\end{proposition}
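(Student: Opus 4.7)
The plan is to mirror, in the symmetric setting, the argument used to prove the analogous planar Proposition earlier. Faithfulness is immediate: the composite of $\mathrm{Conil}$ with the forgetful functor $\dgcooperads \longrightarrow \mathsf{dg}~\mathbb{S}\text{-}\mathsf{mod}$ recovers (after splitting off the $\operad I$ summand) the forgetful functor from $\mathsf{dg}~\overline{\treemod}\text{-}\mathsf{cog}$ to $\mathsf{dg}~\mathbb{S}\text{-}\mathsf{mod}$, which is faithful because $\overline{\treemod}$ is a comonad on a concrete category.

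For fullness, I would start with two dg $\overline{\treemod}$-coalgebras $(V,\delta_V)$ and $(W,\delta_W)$ and a morphism $f \colon \operad I \oplus V \to \operad I \oplus W$ of coaugmented dg cooperads. The compatibility of $f$ with the counits and coaugmentations forces $f$ to decompose as $\id_{\operad I} \oplus \overline{f}$ for a unique map $\overline{f} \colon V \to W$ of dg $\mathbb{S}$-modules; the goal is to prove that $\overline{f}$ intertwines $\delta_V$ and $\delta_W$. As in the planar case, I would use that the natural map $\overline{\treemod} M \to \lim_{n \in \omega^{\op}} \overline{\treemod}_{\leq n} M$ is a monomorphism (this holds because $\overline{\treemod} M$ is a sum over trees, and projecting onto trees of height at most $n$ exhibits it as a sub-object of the product of the $\overline{\treemod}_{\leq n} M$; alternatively it can be deduced from the planar statement via the isomorphism $\nu$ of Proposition \ref{corollarygiso}). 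It therefore suffices to check, for every $n \geq 1$, that the square
\[
\begin{tikzcd}[column sep=2.5pc,row sep=2pc]
V \ar[r,"\overline{f}"] \ar[d,"\delta_V"'] & W \ar[d,"\delta_W"] \\
\overline{\treemod}_{\leq n} V \ar[r,"\overline{\treemod}_{\leq n}(\overline{f})"'] & \overline{\treemod}_{\leq n} W
\end{tikzcd}
\]
commutes.

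The proof proceeds by induction on $n$. The base case $n=1$ is trivial because $\overline{\treemod}_{\leq 1} M = M$ and the composite is just $\overline{f}$. For the inductive step, I invoke the isomorphism $\overline{\treemod}_{\leq n+1} M \cong M \comp \treemod_{\leq n} M$ from Corollary \ref{corollary: iso de la récurrence sur les arbres} together with the symmetric analogue of Lemma \ref{lemma: T-cog implique comonoide pour le circ} (which is stated as the Corollary immediately preceding the present proposition). These identify the level-$(n{+}1)$ component of $\delta$ with the partial-decomposition-type map $V \to V \comp (\operad I \oplus V) \to V \comp \treemod_{\leq n} V$. The square at level $n+1$ then factors as a stack whose top cell is the commutativity of the partial decomposition with $f$ (a consequence of $f$ being a morphism of cooperads), whose middle cell is the induction hypothesis applied inside $V \comp \treemod_{\leq n}(-)$, and whose bottom cell is the comparison isomorphism with $\overline{\treemod}_{\leq n+1}$.

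The main obstacle I anticipate is verifying cleanly that the isomorphism $\overline{\treemod}_{\leq n+1} M \cong M \comp \treemod_{\leq n} M$ intertwines $\delta_M$ (restricted to this level) with the partial cooperadic decomposition of $\operad I \oplus W$ built in the definition of $\mathrm{Conil}$. In the planar setting this was handled by Lemma \ref{lemma: T-cog implique comonoide pour le circ}; in our case the analogous statement is the Corollary proved \emph{mutatis mutandis} just above. Once this compatibility is in place, the inductive step is formal and the argument closes identically to the planar proof, so no further work beyond assembling the diagrams is needed.
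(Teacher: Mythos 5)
Your proposal is correct and follows essentially the same route as the paper: the paper's proof of this proposition simply defers, \emph{mutatis mutandis}, to the planar case, and your argument is precisely that planar proof (faithfulness via the forgetful functor, fullness via the monomorphism into $\lim_n \overline{\treemod}_{\leq n}$ and induction on tree height using the decomposition $\overline{\treemod}_{\leq n+1} M \cong M \comp \treemod_{\leq n} M$) transported to the symmetric setting with the correct supporting statements cited.
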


\begin{proof}
This follows from the same arguments, \textit{mutatis mutandis}, as those used in the planar context. See the proof of Proposition \ref{propositionplanarccooperadff}.
\end{proof}

\begin{definition}[Conilpotent dg cooperad]
Let $\C$ be a coaugmented dg cooperad. It is \textit{conilpotent} if it belongs to the essential image of the functor $\mathrm{Conil}$ from dg $\overline{\treemod}$-coalgebras to dg cooperads. We denote $\dgcooperads^{\categ{conil}}$ the full sub-category of coaugmented dg cooperads spanned by conilpotent ones.
\end{definition}

\begin{corollary}
The adjunction 
\[
\begin{tikzcd}[column sep=5pc,row sep=3pc]
          \dgcooperads_\pl  \arrow[r, shift left=1.1ex, "-\otimes \mathbb S"{name=F}] &\dgcooperads, \arrow[l, shift left=.75ex, "\mathrm{U}_{\mathbb S}"{name=U}]
            \arrow[phantom, from=F, to=U, , "\dashv" rotate=90]
\end{tikzcd}
\]
restricts to an adjunction 
\[
\begin{tikzcd}[column sep=5pc,row sep=3pc]
          \dgcooperads_\pl^{\categ{conil}}  \arrow[r, shift left=1.1ex, "-\otimes \mathbb S"{name=F}] &\dgcooperads^{\categ{conil}}, \arrow[l, shift left=.75ex, "\mathrm{U}_{\mathbb S}"{name=U}]
            \arrow[phantom, from=F, to=U, , "\dashv" rotate=90]
\end{tikzcd}
\]
between conilpotent planar dg cooperads and conilpotent dg cooperads.
\end{corollary}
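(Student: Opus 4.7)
The plan is to derive this restricted adjunction from the adjunction between the categories of coalgebras over the reduced tree comonads established in Corollary \ref{corollaryadjunctionconilp}, transported along the fully faithful functors $\mathrm{Conil}$ whose essential images \emph{define} the conilpotent sub-categories.

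The core step is to verify that the following square commutes up to natural isomorphism:
\[
\begin{tikzcd}[column sep=3pc,row sep=2pc]
\mathsf{dg}~\overline{\treemod}_\pl\text{-}\mathsf{cog} \arrow[r, "- \otimes \mathbb S"] \arrow[d, "\mathrm{Conil}"'] & \mathsf{dg}~\overline{\treemod}\text{-}\mathsf{cog} \arrow[d, "\mathrm{Conil}"] \\
\dgcooperads^{\categ{conil}}_\pl \arrow[r, "- \otimes \mathbb S"'] & \dgcooperads^{\categ{conil}}~.
\end{tikzcd}
\]
Concretely, for a dg $\overline{\treemod}_\pl$-coalgebra $(W, \delta_W)$, applying the strong monoidal functor $- \otimes \mathbb S$ (see the last proposition of Section 2) to the coaugmented planar dg cooperad $\operad I \oplus W$ yields a coaugmented dg cooperad structure on $\operad I \oplus (W \otimes \mathbb S)$. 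I would check that this structure coincides with the one obtained by applying $\mathrm{Conil}$ to the $\overline{\treemod}$-coalgebra structure on $W \otimes \mathbb S$ given by Corollary \ref{corollaryadjunctionconilp}. This is a diagram chase combining: the comonad transformation $\nu \colon \overline{\treemod}_\pl(-) \otimes \mathbb S \xrightarrow{\sim} \overline{\treemod}(- \otimes \mathbb S)$, the strong monoidality isomorphism $\psi \colon (X \comp_\pl Y) \otimes \mathbb S \xrightarrow{\sim} (X \otimes \mathbb S) \comp (Y \otimes \mathbb S)$ of Proposition \ref{lemmacompositioniso}, and the explicit expression of $\Delta_W$ as a sum of three components.

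Next I would verify the analogous square for the right adjoints $\mathrm{U}$ and $\mathrm{U}_\mathbb S$, namely that $\mathrm{Conil}(\mathrm{U}(W')) \cong \mathrm{U}_\mathbb S (\mathrm{Conil}(W'))$ for any dg $\overline{\treemod}$-coalgebra $W'$. This is essentially immediate because $\mathrm{U}_\mathbb S$ only forgets the symmetric action and preserves the underlying dg $\mathbb{N}$-module together with the composition structure, so the coaugmented cooperad structure on $\operad I \oplus \mathrm{U}(W')$ built from the $\overline{\treemod}_\pl$-coalgebra structure on $\mathrm{U}(W')$ described in Corollary \ref{corollaryadjunctionconilp} coincides with the image under $\mathrm{U}_\mathbb S$ of the cooperad structure on $\operad I \oplus W'$. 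Combining the two squares with the adjunction of Corollary \ref{corollaryadjunctionconilp} produces the desired restricted adjunction; in particular, $- \otimes \mathbb S$ sends the essential image of $\mathrm{Conil}$ on the planar side into the essential image of $\mathrm{Conil}$ on the symmetric side, and likewise for $\mathrm{U}_\mathbb S$.

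The main obstacle is the first compatibility: one must carefully track how $\nu$ and $\psi$ interact with the three-term decomposition of $\Delta_W$ (the piece coming from $\delta_W$ composed with the projection onto $\overline{\treemod}_{\pl,\leq 2}$, and the two pieces involving the unit $\operad I$). The compatibility with $\nu$ is controlled by Lemma \ref{lemmaconilpotentcooperadlemmaone} (which ensures that $\tau$, hence $\nu$, intertwines the cooperadic coproducts), while the compatibility with $\psi$ is the strong monoidality of $- \otimes \mathbb S$. Once these are assembled, the claim becomes a routine, if somewhat lengthy, verification.
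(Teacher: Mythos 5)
Your proposal is correct and follows essentially the same route the paper intends: its proof is simply ``this follows from the definitions,'' meaning exactly the unpacking you give, namely that the lifted adjunction of Corollary~\ref{corollaryadjunctionconilp} is intertwined with the fully faithful functors $\mathrm{Conil}$ via the comonad isomorphism $\nu$ and the strong monoidality of $-\otimes\mathbb S$, so both adjoints preserve the essential images defining the conilpotent subcategories. You are merely filling in the compatibility checks the authors leave implicit.
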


\begin{proof}
This follows from the definitions.
\end{proof}

\begin{corollary}
Let $n$ be a natural integer. 

\medskip

\begin{enumerate}
\item The comonad structure on the reduced tree endofunctor $\overline{\treemod}$ restricts to $\overline{\treemod}_{\leq n}$ and $\overline{\treemod}^{(\leq n)}$. 

\medskip

\item The adjunction 
\[
\begin{tikzcd}[column sep=3.5pc,row sep=0.5pc]
          \mathsf{dg}~\overline{\treemod}_\pl\text{-}\mathsf{mod} \arrow[r, shift left=1.1ex, "- \otimes \mathbb S"{name=A}]
          &\mathsf{dg}~\overline{\treemod}\text{-}\mathsf{cog} ~, \arrow[l, shift left=.75ex, "U_{\mathbb S}"{name=B}]
             \arrow[phantom, from=A, to=B, , "\dashv" rotate=90]
\end{tikzcd}
\]
restricts to adjunctions 
\[
\begin{tikzcd}[column sep=3.5pc,row sep=0.5pc]
          \mathsf{dg}~\overline{\treemod}_\pl^{(\leq n)}\text{-}\mathsf{mod} \arrow[r, shift left=1.1ex, "- \otimes \mathbb S"{name=A}]
          &\mathsf{dg}~\overline{\treemod}^{(\leq n)} \text{-}\mathsf{cog} ~, \arrow[l, shift left=.75ex, "U_{\mathbb S}"{name=B}]
             \arrow[phantom, from=A, to=B, , "\dashv" rotate=90]
\end{tikzcd}
\]
\[
\begin{tikzcd}[column sep=3.5pc,row sep=0.5pc]
          \mathsf{dg}~\overline{\treemod}_{\pl, \leq n}\text{-}\mathsf{mod} \arrow[r, shift left=1.1ex, "- \otimes \mathbb S"{name=A}]
          &\mathsf{dg}~\overline{\treemod}_{\leq n}\text{-}\mathsf{cog} ~. \arrow[l, shift left=.75ex, "U_{\mathbb S}"{name=B}]
             \arrow[phantom, from=A, to=B, , "\dashv" rotate=90]
\end{tikzcd}
\]
\end{enumerate}
\end{corollary}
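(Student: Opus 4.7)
The plan is to reduce both statements to their planar analogues via the intertwining isomorphism $\nu$ from Proposition~\ref{corollarygiso}. The main observation is that the sub-groupoids of $\categ{Trees}$ consisting of trees with at most $n$ nodes, and of trees of height at most $n$, are connected full sub-groupoids of $\categ{Trees}$ since every isomorphism of trees preserves both the number of nodes and the height. Therefore Proposition~\ref{corollarygiso} supplies natural isomorphisms $\nu_X|_{\leq n}$ and $\nu_X|_{(\leq n)}$ identifying the truncated symmetric tree endofunctors applied to $X\otimes\mathbb{S}$ with their planar counterparts tensored with $\mathbb{S}$.

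For part (1), I would combine these restrictions of $\nu$ with Corollary~\ref{corollarydecompositionplanarccooperad}, which asserts that the planar coproduct restricts to $\overline{\treemod}_{\pl,\leq n}$ and $\overline{\treemod}_\pl^{(\leq n)}$. Tracing through the defining diagram of $\delta_M$ in the theorem endowing $\overline{\treemod}$ with a comonad structure, one checks that every constituent arrow restricts to the relevant sub-groupoid: the planar coproduct $\delta_\pl$ restricts by Corollary~\ref{corollarydecompositionplanarccooperad}, and both $\nu_X^{-1}$ and $\overline{\treemod}(\nu^{-1})$ restrict by the previous paragraph. This produces well-defined coproducts on $\overline{\treemod}_{\leq n}$ and $\overline{\treemod}^{(\leq n)}$, and coassociativity together with counitality are inherited from the comonad structure on $\overline{\treemod}$ via the natural inclusions, which are (object-wise) monomorphisms of endofunctors.

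For part (2), the argument amounts to inspecting the explicit description of the adjoint functors given in the proof of Corollary~\ref{corollaryadjunctionconilp}. The left adjoint sends a dg $\overline{\treemod}_\pl$-coalgebra $(X,\delta_X)$ to $X\otimes\mathbb{S}$ equipped with the coproduct $\nu_X\circ(\delta_X\otimes\mathbb{S})$; if $\delta_X$ factors through $\overline{\treemod}_\pl^{(\leq n)}(X)$ (respectively $\overline{\treemod}_{\pl,\leq n}(X)$), then, by the restricted $\nu$, the resulting coproduct on $X\otimes\mathbb{S}$ factors through $\overline{\treemod}^{(\leq n)}(X\otimes\mathbb{S})$ (respectively $\overline{\treemod}_{\leq n}(X\otimes\mathbb{S})$). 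A dual verification for the right adjoint, using the inverse isomorphisms $\nu|_g^{-1}$ from Proposition~\ref{corollarygiso} in the four-step composition defining the $\overline{\treemod}_\pl$-coalgebra structure on $\mathrm{U}(M)$, gives the analogous restriction.

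The only mildly subtle step is the bookkeeping in part (1): one must verify that, at each stage of the defining diagram of $\delta_M$, the image of $\overline{\treemod}_{\leq n}(M)$ (respectively $\overline{\treemod}^{(\leq n)}(M)$) lands in $\overline{\treemod}_{\leq n}\overline{\treemod}_{\leq n}(M)$ (respectively $\overline{\treemod}^{(\leq n)}\overline{\treemod}^{(\leq n)}(M)$). Once this is established, the adjoint restrictions in part (2) follow formally from the construction recalled above.
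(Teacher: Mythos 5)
Your argument is correct and is essentially the paper's own proof, spelled out in more detail: the paper simply observes that the coproduct of $\overline{\treemod}$ is defined from the planar coproduct (which restricts by Corollary~\ref{corollarydecompositionplanarccooperad}) conjugated by the intertwining isomorphism $\nu$ (which restricts by Proposition~\ref{corollarygiso}), exactly as you trace through. Your added bookkeeping on the defining coequalizer diagram and on the explicit adjoints of Corollary~\ref{corollaryadjunctionconilp} is a faithful expansion of that one-line argument, not a different route.
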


\begin{proof}
This follows from the definition of the coproduct of the comonad $\overline{\treemod}$. See Corollary \ref{corollarydecompositionplanarccooperad}.
\end{proof}

\begin{proposition}\label{proppresentabilityofplanarconilp}
    Let $n$ be a natural integer. The forgetful functor 
    \[
    \mathsf{dg}~\overline{\treemod}^{(\leq n)}\text{-}\mathsf{cog} \longrightarrow \mathsf{dg}~\overline{\treemod}\text{-}\mathsf{cog}~.
    \]
    is fully faithful. Furthermore, both categories are presentable, and therefore it admits a right adjoint denoted by $\overline{\mathrm{F}}_n$.
\end{proposition}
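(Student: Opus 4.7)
The plan is to mirror exactly the argument used in the planar analog (Proposition \ref{proppresentabilityofplanarconilp}), transferring it to the symmetric setting via the tools built in the previous subsection. The ingredients we need are: presentability of the category of dg $\mathbb{S}$-modules, accessibility of the comonads $\overline{\treemod}$ and $\overline{\treemod}^{(\leq n)}$, and the fact that the inclusion $\overline{\treemod}^{(\leq n)} \hookrightarrow \overline{\treemod}$ is an objectwise monomorphism of comonads.

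First, I would check presentability of both coalgebra categories. The category $\mathsf{dg}~\mathbb{S}\text{-}\mathsf{mod}$ is locally presentable. The endofunctors $\overline{\treemod}$ and $\overline{\treemod}^{(\leq n)}$ are accessible: indeed, by their construction as reflexive coequalizers (see the definition of the endofunctor of a sub-groupoid and Proposition \ref{corollarygiso}) they are obtained from sifted colimits of accessible functors of the form $g_{\pl}(-) \otimes \mathbb{S}$, which in turn preserve filtered colimits in each component. Applying the main theorem of \cite{ChingRiehl} (categories of coalgebras over accessible comonads on presentable categories are themselves presentable) gives presentability of both $\mathsf{dg}~\overline{\treemod}\text{-}\mathsf{cog}$ and $\mathsf{dg}~\overline{\treemod}^{(\leq n)}\text{-}\mathsf{cog}$.

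Next, I would address full faithfulness. The natural inclusion $\overline{\treemod}^{(\leq n)} \hookrightarrow \overline{\treemod}$ is a morphism of comonads (the symmetric analogue of Corollary \ref{corollarydecompositionplanarccooperad}, which is inherited from the planar statement via the intertwining isomorphism $\nu$) and is componentwise a monomorphism of dg $\mathbb{S}$-modules, since the groupoid of trees with at most $n$ nodes is a full connected sub-groupoid of $\categ{Trees}$ and summing over it produces a direct summand. A standard argument then shows that the induced forgetful functor between coalgebra categories is fully faithful: given two $\overline{\treemod}^{(\leq n)}$-coalgebras $(V, \delta_V)$ and $(W,\delta_W)$ and a morphism $f: V \to W$ of the underlying $\overline{\treemod}$-coalgebras, the square expressing compatibility of $f$ with $\delta$ at the level $\overline{\treemod}^{(\leq n)}$ factors through the monomorphism into the $\overline{\treemod}$-level square, which commutes by hypothesis, and hence the $\overline{\treemod}^{(\leq n)}$-level square commutes as well.

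Finally, having established that both categories are presentable and that the forgetful functor preserves all colimits (it is in fact cocontinuous, being the restriction of the identity on dg $\mathbb{S}$-modules), the adjoint functor theorem for presentable categories (or, equivalently, the adjoint lifting theorem as in \cite[Appendix A]{premierpapier}) provides the desired right adjoint $\overline{\mathrm{F}}_n$. I do not expect a genuine obstacle here: the only subtle point is the accessibility of $\overline{\treemod}$, which must be argued from its construction as a sifted colimit of the planar building blocks tensored with $\mathbb{S}$, rather than directly; but this is exactly the content we already set up in the previous subsection and is analogous to the planar situation.
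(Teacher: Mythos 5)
Your proposal follows the paper's argument exactly: presentability of both coalgebra categories via accessibility of the comonads and the result of \cite{ChingRiehl}, full faithfulness from the fact that $\overline{\treemod}^{(\leq n)} \rightarrowtail \overline{\treemod}$ is an objectwise monomorphism of comonads, and the right adjoint from the adjoint lifting theorem. The paper simply defers to the planar case; you have spelled out the same three steps in the symmetric setting, so there is nothing to add.
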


\begin{proof}
This follows from the same arguments as in the planar case, see Proposition \ref{proppresentabilityofplanarconilp}.
\end{proof}

We will denote by $\mathrm{F}^{\mathrm{rad}}_n$ the following endofunctor 
\[
\begin{tikzcd}
\dgcooperads^{\categ{conil}} \arrow[r,"\cong"]
&\mathsf{dg}~\overline{\treemod}\text{-}\mathsf{cog} \arrow[r,"\overline{\mathrm{F}}_n"]
&\mathsf{dg}~\overline{\treemod}^{(\leq n)}\text{-}\mathsf{cog}  \arrow[r]
&\mathsf{dg}~\overline{\treemod}\text{-}\mathsf{cog}  \arrow[r,"\cong"]
&\dgcooperads^{\categ{conil}}
\end{tikzcd}
\]

of the category of conilpotent dg cooperads.

\begin{definition}[Coradical filtration]
Let $\C$ be a conilpotent dg cooperad. Its $n$-\textit{coradical filtration} is given by the conilpotent dg cooperad $\mathrm{F}^{\mathrm{rad}}_n \C$. It induces a ladder diagram 
\[
\mathrm{F}^{\mathrm{rad}}_0 \C \rightarrowtail \mathrm{F}^{\mathrm{rad}}_1 \C \rightarrowtail \cdots \mathrm{F}^{\mathrm{rad}}_n \C \rightarrowtail \cdots~,
\]
indexed by $\mathbb{N}$, where all the arrows are monomorphisms.
\end{definition}

For any  conilpotent dg cooperad $\C$, there is a canonical isomorphism between $\C$ and the colimit of the following ladder diagram
\[
\mathrm{F}^{\mathrm{rad}}_0 \C \rightarrowtail \mathrm{F}^{\mathrm{rad}}_1 \C \rightarrowtail \cdots \mathrm{F}^{\mathrm{rad}}_n \C \rightarrowtail \cdots
\]
in the category of conilpotent dg cooperads.

\begin{remark}
If the characteristic of the base field $\kk$ is zero, both the tree module $\treemod (-)$ and the composition product $\circ$ preserve {\bf{finite}} cosifted limits. Therefore the forgetful functors from (conilpotent) dg cooperads to dg $\mathbb S$-modules preserve these limits. Moreover, as in the planar case, given a dg cooperad $\C$, the conilpotent dg cooperad $\overline{\mathrm{F}}_n \C$ fits in the following pullback diagram
\[
        \begin{tikzcd}[column sep=2.5pc,row sep=2.5pc]
           \mathrm{F}^{\mathrm{rad}}_n \C \arrow[dr, phantom, "\lrcorner", very near start]
            \ar[r] \ar[d]
            & \overline{\treemod}^{(\leq n)} \C
            \ar[d,rightarrowtail]
            \\
            \C
            \ar[r,"\delta_\C"]
            & \overline{\treemod} \C~,
        \end{tikzcd}
\]
in the category of dg $\mathbb{S}$-modules, where $\delta_\C$ denotes the dg $\overline{\treemod}$-coalgebra structure of $\C$.
\end{remark}

\appendix
\section{Trees}
\label{appendixtrees}
The goal of this appendix is to give so recollections on trees. 

\begin{definition}[Planar tree]
    A \textit{planar tree} $t$ is a tuple $(\mathrm{edges}(t), \mathrm{leaves}(t), <, \tilde <)$ where
    \begin{itemize}
        \item $(\mathrm{edges}(t), <)$ is a finite non-empty poset called the poset of edges of $t$. It has a least element called the root. Moreover, for every edge $e$, the subposet spanned by elements that are lower that $e$ $\{e' \in \mathrm{edges}(t)|\ e' \leq e\}$ is a linear poset;
        \item $\mathrm{leaves}(t)$ is a subset of the set of maximal elements of the set of edges;
        \item $\tilde <$ is an additional order on the set of edges, called the planar order, that is linear and so that 
        \begin{itemize}
            \item for every two edges $e,e'$, $e ~ \tilde < ~ e'$ whenever $e < e'$;
            \item for every three edges $e,e',e''$ so that $e < e'$ and $e'' \tilde < e'$, then $e$ and $e''$ are comparable for the order $<$ in the sense that either $e'' < e$ or $e'' = e$ or  $e'' > e$.
        \end{itemize}
    \end{itemize}
\end{definition}

\begin{definition}[Node]
Let $t$ be a planar tree. A \textit{node} (or \textit{vertex}) of $t$ is an edge $e$ that is not a leaf. The inputs of this nodes are the edges $e'$ that are strictly greater than $e$ so that there are no edge between $e$ and $e'$. Sometimes, the edge that defines the node is called the output edge of this node. Moreover, the number of inputs of a node is called its arity.
\end{definition}

A drawing is often simpler than a definition. Let us consider the following one.
 \begin{center}
\begin{tikzpicture}
    \draw (2,0) -- (2,2) ;
    \draw (2,1) -- (1,2) ;
    \draw (1,2) -- (1.5,3) ;
    \draw (1,2) -- (0,3) ;
    \draw (2,1) -- (3,2) ;
    \draw (3,2) -- (2,3) ;
    \draw (3,2) -- (3,3) ;
    \draw (3,2) -- (4,3) ;
    \draw (2,1) node {$\bullet$} ;
    \draw (1,2) node {$\bullet$} ;
    \draw (3,2) node {$\bullet$} ;
    \draw (3,3) node {$\bullet$} ;
    \draw (2,0.5) node[left]{$a$} ;
    \draw (1.5,1.5) node[below left]{$b$} ;
    \draw (0.5,2.5) node[below left]{$c$} ;
    \draw (1.25,2.5) node[above left]{$d$} ;
    \draw (2,1.9) node[left]{$e$} ;
    \draw (2.5,1.5) node[below right]{$f$} ;
    \draw (2.5,2.5) node[below left]{$g$} ;
    \draw (3,2.9) node[left]{$h$} ;
    \draw (3.5,2.5) node[below right]{$i$} ;
\end{tikzpicture}
\end{center}
It represents a planar tree whose set of edges with planar order is 
$$
a ~ \tilde < ~ b ~ \tilde < ~ c ~ \tilde < ~ d ~ \tilde < ~ e ~ \tilde < ~ f ~ \tilde < ~ g ~ \tilde < ~ h ~ \tilde < ~ i~.
$$
For the tree order is made up of the relations
$$
a < b,e,f ; \quad b < c,d ; \quad f< g,h,i.
$$
The leaves are $c,d,e,g,i$ and the nodes are $a,b,f,h$ with arities respectively $3, 2,3, 0$.

\begin{definition}[Tree isomorphisms]
Let $t,t'$ be two trees. An i\textit{isomorphism of trees} from $t$ to $t'$ is an isomorphism of posets from $(\mathrm{edges}(t), <)$ to $(\mathrm{edges}(t'), <)$ that restricts to a bijection between leaves; thus it restricts to a bijection between nodes.

\medskip

An \textit{isomorphism of planar trees} from $t$ to $t'$ is an isomorphism of trees that preserves also the planar order.
\end{definition}

\begin{definition}[Tree groupoids]
    The groupoid of planar trees $\categ{Trees}_\pl$
    is the groupoid whose objects are planar trees and whose
    morphisms are isomorphisms of planar trees.
    
    \medskip
    
    The groupoid of trees $\categ{Trees}$
    is the groupoid whose objects are planar trees and whose
    morphisms are isomorphisms of trees.
\end{definition}

\begin{definition}[Trivial tree]
    A tree $t$ is \textit{trivial} if $\mathrm{edges}(t) = \mathrm{leaves}(t) \cong \ast$.
\end{definition}

\begin{definition}[Subtree]
    Let $t$ be a planar tree. A \textit{subtree} $t'$ of $t$ is the data of a tuple
    $(\mathrm{edges}(t'), \mathrm{leaves}(t'), <, \tilde <)$ where
    \begin{itemize}
        \item $\mathrm{edges}(t')$ is a subset of $\mathrm{edges}(t)$ and $<, \tilde <$ are the orders induced from those of $t$;
        \item $\mathrm{leaves}(t)$ is a subset of the set of maximal elements of the poset of edges of $(t', <)$;
        \item this two subsets satisfy the following conditions
        \begin{itemize}
            \item the poset $(\mathrm{edges}(t'), <)$ has a least element (the root of $t'$);
            \item for every three edges $e < e' < e''$ of $t$, $e'$ belongs to $t'$ whenever $e$ and $e''$ do;
            \item for every node $n$ of $t$ that belongs to $t'$, either the underlying edge of $n$ is a leaf of $t'$ or $t'$ has in its edges all the inputs of $n$;
            \item a leaf of $t$ that belongs to the edges of $t'$ is actually a leaf of $t'$ that is, $\mathrm{leaves}(t) \cap \mathrm{edges}(t') \subset \mathrm{leaves}(t)$.
        \end{itemize}
        In particular, $t'$ is a planar tree.
    \end{itemize}
\end{definition}

\begin{definition}[Partition of trees]
    Let $t$ be a non-trivial planar tree. A \textit{partition of} $t$ into non trivial subtrees is the data of a natural integer $m \geq 1$ together with non trivial subtrees $t_1, \ldots, t_m$ so that
    \begin{itemize}
        \item for every node $n$ of $t$, there exists a unique $i$ so that $n$ is a node of $t_i$;
        \item for every leaf $l$ of $t$, there exists a unique $i$ so that $l$ is an edge (and thus a leaf) of $t_i$.
    \end{itemize}
\end{definition}

\begin{proposition}
    Let $t$ be a non-trivial planar tree and let $t_1, \ldots, t_m$ be a partition of $t$. Then
    \begin{itemize}
        \item every node edge $e$ of $t$ belongs to at least one and at  most two of these subtrees;
        \item the root of $t$ belongs to only one subtree;
        \item the root of a subtree is a node of this subtree and thus a node of $t$ 
        \item an node edge of $t$ can be the root of only one subtree. 
    \end{itemize}
\end{proposition}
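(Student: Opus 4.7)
The plan is to derive all four bullets from two recurring ingredients: the uniqueness clause in the partition axiom---every node of $t$ is a node of exactly one $t_i$---together with the non-triviality of each $t_i$, which forbids a subtree whose root and a leaf coincide.

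For the first bullet, I would begin by observing that whenever $e$ is a node of $t$ lying in some $t_j$, the subtree axiom on nodes forces $e$ to be either a leaf of $t_j$ or a node of $t_j$ (with all of its inputs in $t_j$). Uniqueness then supplies a single $t_i$ in which $e$ sits as a node; in any other $t_j$ containing $e$, $e$ must be a leaf of $t_j$, giving the lower bound of one subtree and reducing the upper bound to showing that $e$ is a leaf of at most one subtree. Assume, for contradiction, $e$ is a leaf of both $t_j$ and $t_k$ with $j \neq k$. Non-triviality forces $e$ not to be the root of either, so the immediate predecessor edge $e^{-}$ of $e$ in $t$ exists; applying the middle-edge convexity axiom to $r_j < e^{-} < e$ yields $e^{-} \in t_j$, and symmetrically $e^{-} \in t_k$. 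Since $e > e^{-}$ lies in each, $e^{-}$ is not maximal in either, hence not a leaf of either, hence a node of both $t_j$ and $t_k$; as $e^{-}$ has $e$ strictly above it in $t$, it is itself a node of $t$, contradicting uniqueness.

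The remaining three bullets follow by similar minimality arguments. The root of $t$ is a node (non-triviality of $t$) and the minimum of the whole edge poset, so if it lies in some $t_j$ it must be the root of $t_j$; being simultaneously root and leaf of $t_j$ would force $t_j$ to be a singleton, i.e.\ trivial, so the root of $t$ can lie only in the unique subtree containing it as a node. The same obstruction proves the third bullet: the root $r$ of any $t_i$ cannot be a leaf of $t_i$---else $t_i = \{r\}$ would be trivial---so $r$ is a non-leaf edge of $t_i$, i.e.\ a node of $t_i$; the subtree condition $\mathrm{leaves}(t) \cap \mathrm{edges}(t_i) \subset \mathrm{leaves}(t_i)$ then forces $r$ to be a non-leaf of $t$ as well, hence a node of $t$. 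The fourth bullet is immediate: if $e$ were the root of both $t_i$ and $t_j$ with $i \neq j$, then by the third bullet it would be a node of both, contradicting uniqueness.

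The only genuine obstacle is the "at most two" half of the first bullet, which requires the predecessor-edge/convexity argument; the remaining claims amount to unwinding the definitions once one recognises that non-triviality combined with minimality pins the root of any subtree to be a node.
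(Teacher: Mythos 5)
Your proof is correct and follows the only available route, namely unwinding the definitions of subtree and partition: the paper itself offers no argument beyond declaring the proposition a straightforward consequence of those definitions. Your write-up supplies exactly the details the paper omits, and the one genuinely non-obvious step --- the immediate-predecessor/convexity argument showing a node of $t$ cannot be a leaf of two distinct subtrees, which gives the ``at most two'' bound --- is handled correctly.
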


\begin{proof}
This is a straightforward consequence of the definition of a partition into non-trivial subtrees.
\end{proof}

\begin{definition}[Quotient tree]
    Let $t$ be a non-trivial planar tree and let $t_1, \ldots, t_m$ be a partition of $t$. The \textit{quotient tree}  $t/(t_1, \ldots, t_m)$ of $t$ by this partition is the tree whose nodes are the roots of the subtrees, the leaves are those of $t$ and whose orders $<, \tilde <$ are those of $t$.  
\end{definition}

\begin{definition}[Grafting tree]
    Let $t$ be a non-trivial planar subtree with $k$ nodes $t_1, \ldots, t_k$ be non-trivial trees so that the number of leaves of $t_i$ is equal to the number of inputs of the $ith$ node. This gives a canonical isomorphism
    $$
    \mathrm{edges}(t) \backslash \{\mathrm{root}(t)\} \cong  \coprod_{i=1}^k \mathrm{leaves}(t_i).
    $$
    Then we define the \textit{grafting tree}
    $\mathrm{graft}_t(t_1, \ldots, t_k)$ of the trees $t_1, \ldots, t_k$
    along $t$ as the planar tree so that
    \begin{itemize}
        \item the set of node edges is $\coprod_{i=1}^k \mathrm{nodes}(t_i)$;
        \item the set of leaves is the subset of $\coprod_{i=1}^k \mathrm{leaves}(t_i)$ made up of elements whose image in $\mathrm{edges}(t)$ is a leaf of $t$;
        \item for every two edges $e,e'$ of $\mathrm{graft}_t(t_1, \ldots, t_k)$ that proceeds from the two trees $t_i,t_j$ attached to the nodes $n_i, n_j$ of $t$, $e<e'$ 
        if and only if either
        \begin{itemize}
            \item $n_i < n_j$ and $e < l$ where $l$ is the leaf of $t_i$ that corresponds to the unique input $x$ of the node $n_i$ of $t$ so that $x \leq n_j$;
            \item $i=j$ and $e < e'$ within $t_i$;
        \end{itemize}
        \item for every two edges $e,e'$ of $\mathrm{graft}_t(t_1, \ldots, t_k)$ that proceeds from the two trees $t_i,t_j$ attached to the nodes $n_i, n_j$ of $t$, $e<e'$ 
        if and only if either
        \begin{itemize}
            \item $n_i ~ \tilde < ~ n_j$ and $n_i$ and $n_j$ are not comparable for $<$;
            \item $n_i < n_j$ and $e ~ \tilde < ~ l$ where $l$ is the leaf of $t_i$ that corresponds to the unique input $x$ of the node $n_i$ of $t$ so that $x \leq n_j$;
            \item $i=j$ and $e~ \tilde < ~e'$ within $t_i$.
        \end{itemize}
    \end{itemize}
\end{definition}

\begin{proposition}
Let $t$ be a non-trivial planar tree and let $t_1, \ldots, t_m$ be a partition of $t$. One has a canonical isomorphism of planar trees
    $$
    t \cong \mathrm{graft}_{t'}(t_1, \ldots, t_k)
    $$
    where $t'= t/(t_1, \ldots, t_k)$.
\end{proposition}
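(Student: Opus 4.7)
The plan is to construct an explicit bijection $\varphi: \mathrm{edges}(t) \longrightarrow \mathrm{edges}(\mathrm{graft}_{t'}(t_1, \ldots, t_k))$ and then verify that it preserves the leaves, the tree order $<$, and the planar order $\tilde <$. The map $\varphi$ is defined as follows. Given an edge $e$ of $t$, if $e$ is a node of $t$, then by the partition axiom there exists a unique index $i$ such that $e$ is a node of $t_i$, and we send $e$ to its copy in $\mathrm{nodes}(t_i) \subset \mathrm{edges}(\mathrm{graft}_{t'}(t_1,\ldots,t_k))$. If $e$ is a leaf of $t$, there exists a unique index $i$ such that $e$ is a leaf of $t_i$, and we send it to this copy, which is indeed a leaf of $\mathrm{graft}_{t'}(t_1,\ldots,t_k)$ by definition.

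First I would check that $\varphi$ is bijective. Surjectivity on nodes follows directly from the identification $\mathrm{nodes}(\mathrm{graft}_{t'}(t_1,\ldots,t_k)) = \coprod_i \mathrm{nodes}(t_i)$ together with the partition condition that every $t_i$-node is a node of $t$. Surjectivity on leaves follows from the description of the leaves of the grafting and from the partition axiom matching leaves of $t$ with leaves of the $t_i$. Injectivity is guaranteed by the uniqueness clauses in the definition of a partition.

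Next I would verify compatibility with the two orders, proceeding case by case according to whether two edges $e, e'$ of $t$ lie in the same subtree $t_i$ or in distinct subtrees $t_i, t_j$. When they lie in the same subtree, the orders on $t_i$ are by definition induced from those of $t$, so $\varphi$ trivially preserves both $<$ and $\tilde<$ on this piece. When $e \in t_i$ and $e' \in t_j$ with $i \neq j$, the roots $n_i, n_j$ of these subtrees are nodes of the quotient tree $t' = t/(t_1, \ldots, t_k)$, and the grafting order is defined precisely by tracing a comparison in $t'$ back to a comparison within the $t_i$ via the input edge realising the passage from $n_i$ to $n_j$. A direct inspection of the tree order of $t$ (where any comparison $e < e'$ between edges in different subtrees necessarily passes through such an input edge) shows it coincides with the grafting rule. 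The planar order case is similar but with a further subdivision according to whether $n_i$ and $n_j$ are comparable for $<$ in $t'$; again a direct verification against the three cases in the definition of $\mathrm{graft}_{t'}$ yields the result.

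The only genuinely delicate part is matching the two clauses defining the grafted planar order $\tilde <$ against the original planar order of $t$. This requires using the axiom in the definition of a planar tree that relates the orders $<$ and $\tilde <$ to identify, for each pair of edges in different subtrees, the unique input edge $x$ of $n_i$ that lies below $n_j$. Once this bookkeeping is carried out, the compatibilities of $\varphi$ with roots, leaves, nodes, and both orders assemble to produce the desired canonical isomorphism of planar trees.
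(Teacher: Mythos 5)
Your proposal is correct and matches the paper, which simply records this as ``a straightforward check'': the only available argument is precisely the one you outline, namely the edge-level bijection sending each node of $t$ to its unique copy in $\coprod_i \mathrm{nodes}(t_i)$ and each leaf to its unique copy among the $\mathrm{leaves}(t_i)$, followed by a case-by-case verification of both orders. Your attention to the case of two edges lying in distinct subtrees, where the comparison must be traced through the input edge of $n_i$ lying below $n_j$, is exactly the bookkeeping the paper's definition of $\mathrm{graft}_{t'}$ is designed to make work.
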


\begin{proof}
This follows from a straightforward check.
\end{proof}

\bibliographystyle{alpha}
\bibliography{bibax}

%-------------------------------------------------

\end{document}